\newtheorem{theorem}{Theorem}
\newtheorem{proposition}{Proposition}
\newtheorem{lemma}{Lemma}[section]
\newtheorem*{acknowledgement*}{Acknowledgement}
\begin{document}

\title[Torsion groups of Mordell curves over cubic and sextic fields]{Torsion groups of Mordell curves over\\ cubic and sextic fields }

\author{Pallab Kanti Dey}
\address[]{Pallab Kanti Dey, Ramakrishna Mission Vivekananda Educational and Research Institute, Belur Math, Howrah - 711202, West Bengal, India}
\email[]{pallabkantidey@gmail.com}

\author{Bidisha Roy}
\address[]{Bidisha Roy, Harish-Chandra Research Institute, HBNI, Chhatnag Road, Jhunsi, Prayagraj-211019, Uttar Pradesh, India}
\email[]{bidisharoy@hri.res.in}

\subjclass[2010]{Primary: 11G05, 11R16, 11R21; Secondary: 14H52}

\keywords{Elliptic curves, Torsion group, Number fields}

\begin{abstract}
In this paper, we classify torsion groups of rational Mordell curves explicitly over cubic fields as well as over sextic fields. Also, we classify torsion groups of Mordell curves over cubic fields and for Mordell curves over sextic fields, we produce all possible torsion groups.
\end{abstract}

\maketitle
\section{Introduction}

Let $K$ be a number field and $E$ be an elliptic curve defined over $K$. Then,   by the Mordell-Weil theorem,  the group $E(K)$ of $K$-rational points is a finitely generated abelian group. Hence, by the structure theorem of finitely generated abelian groups, we  have $E(K) \cong T \oplus \mathbb{Z}^r$, where $T$ is the torsion subgroup of $E(K)$ and $r \geq 0$ is the rank of $E$ over $K$. Sometimes we may write $T = {E(K)}_{tors}$. 
 
\smallskip

Let $\Phi (d)$ be the set of all possible isomorphic torsion structures $E(K)_{tors}$, where $K$ runs through all number fields $K$ of degree $d$ and $E$ runs through all elliptic curves over $K$. By the uniform
boundedness theorem \cite{mer}, $\Phi(d)$ is a finite set. Then the problem is to determine $\Phi(d)$.

\smallskip

%In a similar way like above, we can restrict  elliptic curves over $\mathbb{Q}$ and define that the set of all possible torsion subgroups of any rational elliptic curve over a number field of degree $d$  is denoted by $\Phi_{\mathbb{Q}}(d)$.  Note that when $K=\mathbb{Q}$, we see that $\Phi(1) = \Phi_{\mathbb{Q}}(1)$. Again similarly,  let us denote by  $\Phi^\infty(d)$  the set of all possible torsion subgroups (up to isomorphism) that arise for infinitely many isomorphic classes of elliptic curves over number fields of degree $d$.

%\smallskip

 When $K = \mathbb {Q}$, in \cite{maz}, Mazur proved that 
 $$ \Phi(1) = \{\mathbb{Z}/m\mathbb{Z}: \quad 1 \leq m \leq 12, \quad m \neq 11\} \cup \{\mathbb{Z}/2\mathbb{Z} \oplus \mathbb{Z}/{2m} \mathbb{Z}: 1 \leq m \leq 4\}.$$ 

\smallskip

By a result of  Kamienny \cite{kam} and by another result of Kenku and Momose \cite{kemo}, it is known that
$$ 
\Phi(2) = \{\mathbb{Z}/m\mathbb{Z}: \quad 1 \leq m \leq 18, \quad m \neq 17\} \cup \{\mathbb{Z}/2\mathbb{Z} \oplus \mathbb{Z}/{2m} \mathbb{Z}: \quad 1 \leq m \leq 6\}$$ 
$$\cup \{\mathbb{Z}/3\mathbb{Z} \oplus \mathbb{Z}/{3m}\mathbb{Z}: \quad  m = 1,2\} \cup \{\mathbb{Z}/4\mathbb{Z}\}.
$$

%\smallskip

%After that, in \cite{jks}, it has been proved that the group structures which appear in $\Phi^\infty(3)$ are exactly the following 
%\begin{eqnarray*}
%&\mathbb{Z}/m\mathbb{Z}; \qquad 1 \leq m\leq 20, m \neq 17,19 \\
%&\mathbb{Z}/2\mathbb{Z} \oplus \mathbb{Z}/2n\mathbb{Z}; \qquad 1 \leq n\leq 7.
%\end{eqnarray*}

\smallskip

Moreover in \cite{jkp},  it has been proved that if we let the quadratic fields vary, then all of the $26$ torsion subgroups described above appear infinitely often. However it is still unknown about the set $\Phi(d)$ for $d \geq 3$ in general.

\smallskip

In analogy to $\Phi(d)$, let $\Phi_{\mathbb{Q}}(d)$ be the subset of $\Phi(d)$ such that $H \in \Phi_{\mathbb{Q}}(d)$ if there is an elliptic curve $E$ over $\mathbb{Q}$ and a number field $K$ of degree $d$ such that $E(K)_{tors} \simeq  H$. Note that when $K=\mathbb{Q}$, we see that $\Phi(1) = \Phi_{\mathbb{Q}}(1)$.

%From the above information, we can say that the set of these $25$ groups is a subset of $\Phi(3)$.
%Moreover, in the same paper \cite{jks}, they have proved that each of elements of $\Phi(1)$ occurs in the set $\Phi^\infty(1)$ also. They have also mentioned that all $26$ groups of $\Phi(2)$ appear as a member of the set $\Phi^\infty(2)$. Moreover, $\Phi^\infty(d)$  for $d=5$ and $6$ have been determined by Derickx and Sutherland in \cite{derisuth}.

%\smallskip

 %After that, in \cite{jkp} it has been determined that which groups $\mathbb{Z}/M\mathbb{Z}\oplus\mathbb{Z}/N\mathbb{Z}$ occur infinitely often as torsion groups $E(K)_{tors}$ when
%$K$ varies over all quartic number fields and $E$ varies over all elliptic curves over $K$. However in general it is still unknown about the set $\Phi(d)$ for $d \geq 3$.

\smallskip

One of the first general result is due to Najman \cite{naj2}, who determined $\Phi_{\mathbb{Q}}(d)$ completely for $d = 2$ and $3$. More precisely, he proved that

$$
\Phi_{\mathbb{Q}}(2)= \{\mathbb{Z}/m\mathbb{Z}: \quad 1 \leq m \leq 16, \quad m \neq 11,13,14\} \cup \{\mathbb{Z}/2\mathbb{Z}\oplus \mathbb{Z}/2m\mathbb{Z}: \quad 1 \leq m \leq 6\}$$
$$\cup \{\mathbb{Z}/3\mathbb{Z}\oplus \mathbb{Z}/3m\mathbb{Z}: \quad m = 1, 2\}\cup \{\mathbb{Z}/4\mathbb{Z}\oplus \mathbb{Z}/4\mathbb{Z}\}
$$

and

$$
\Phi_{\mathbb{Q}}(3) = \{\mathbb{Z}/m\mathbb{Z}: 1 \leq m \leq 21, m \neq 11,15,16,17,19,20\}$$
$$\cup \{\mathbb{Z}/2\mathbb{Z} \oplus \mathbb{Z}/{2m} \mathbb{Z}: 1 \leq m \leq 7, m \neq 5,6\}.
$$

 Very recently, Gonz\'{a}lez-Jim\'{e}nez and Najman  \cite{gn} completely determined the set $\Phi_{\mathbb{Q}}(4)$. Before that, Gonz\'{a}lez-Jim\'{e}nez \cite{egj} completely described the set $\Phi_{\mathbb{Q}}(5)$. %For a given elliptic curve $E/\mathbb{Q}$ with torsion subgroup $G = E(\mathbb{Q})_{tors}$, H.B. Daniels and Gonz\'{a}lez-Jim\'{e}nez, in \cite{jd} studied what groups (up to isomorphism) can occur as the torsion subgroup
%of $E$ base-extended to $K$, a degree $6$ extension of $\mathbb{Q}$. 
Recently, H.B. Daniels and Gonz\'{a}lez-Jim\'{e}nez \cite{jd} have given a partial answer to the classification of $\Phi_{\mathbb{Q}}(6)$. In particular, they studied what groups (up to isomorphism) can occur as the torsion subgroup
of $E/\mathbb{Q}$ base-extended to a sextic field. In \cite{gn}, it was also proved that $\Phi_{\mathbb{Q}}(7) = \Phi(1)$ and $\Phi_{\mathbb{Q}}(d) = \Phi(1)$ for any integer $d$ not divisible by $2,3,5$ and $7$.

\smallskip

In the case of elliptic curves with complex multiplication, we denote by ${\Phi}^{CM}(d)$ and ${\Phi}^{CM}_{\mathbb{Q}}(d)$ the analogue of the sets $\Phi(d)$ and ${\Phi}_{\mathbb{Q}}(d)$ respectively but restricting to elliptic curves with complex multiplication. The set ${\Phi}^{CM}(1)$ was completely determined by Olson \cite{ols}. Also ${\Phi}^{CM}(2)$ and ${\Phi}^{CM}(3)$ was determined by M{\" u}ller et al.  \cite{muller} and by Zimmer et al.  \cite{fswz, pwz} respectively. Recently, Clark et al. \cite{ccrs} have computed the sets ${\Phi}^{CM}(d)$ for $2 \leq d \leq 13$.

%\smallskip

 %\begin{definition}
%Any elliptic curve of the form $y^2=x^3+c$ which is defined over number field $K$ (equivalently $c\in K$) is called {\it Mordell curve}. 
 %\end{definition}
 
 \smallskip
 
 Family of Mordell curves over number field $K$ are of the form $y^2 = x^3 + c$  where $c \in K$. In particular, it is known that this family of curves are CM elliptic curves. In the case of Mordell curves, we denote by ${\Phi}^{M}(d)$ and ${\Phi}^{M}_{\mathbb{Q}}(d)$ the analogue of the sets $\Phi(d)$ and ${\Phi}_{\mathbb{Q}}(d)$ respectively but restrict to Mordell curves. The set ${\Phi}^{M}(1)$ was determined completely in \cite{kna}. Recently, in \cite{dey2}, the set ${\Phi}^{M}_{\mathbb{Q}}(d)$ was computed for $d=2$ and for all $d\geq 5$ with $gcd(d,6) = 1$. 

\smallskip

Motivated by the above results, in this paper, we determine the set 
${\Phi}^{M}(d)$ and ${\Phi}^{M}_{\mathbb{Q}}(d)$ completely for $d=3$ and $6$. Moreover, we completely describe the conditions whenever a member from  the sets ${\Phi}^{M}_{\mathbb{Q}}(3), {\Phi}^{M}_{\mathbb{Q}}(6)$ and ${\Phi}^{M}(3)$, is actually appearing as a torsion subgroup.
%We have mentioned earlier that for an elliptic curves with complex multiplication, we don't know about the set $\Phi^{CM}(d)$, when $d$ is even and $d>13$. Since Mordell curves is a family of elliptic curves with complex multiplication, the exact set, $\Phi^{M}(6)$ can shed light to even case. 

\section{Main results}
Consider a family of Mordell curves of the form $y^2 = x^3 + c$  where $c\in \mathbb Q$. By a rational transformation, it is enough to assume that $c$ is an integer. For this family of curves, we derive precise torsion subgroup of $E(K)$ for  any cubic or sextic field $K$.

For an elliptic curve $E: y^2 = x^3 +c$ with $c\in \mathbb Z$,  we write $c = c_1 t^6$ for some sixth power-free integer $c_1$ and for some nonzero integer $t$. Then $(x,y)$ is a point on the elliptic curve $E_1: y^2 = x^3 + c_1$ if and only if $(t^2x, t^3y)$ is a point on $E$. Thus, it is enough to assume that $c$ is a sixth power-free integer to compute the torsion subgroup of $E(K)$ for some number field $K$. Here we prove the following results.

\smallskip

\begin{theorem}\label{cubicQ}
Let $E: y^2 = x^3 + c$  be a Mordell curve for any $6$-th power-free integer $c$. Then  $${\Phi}^{M}_{\mathbb{Q}}(3) = \{\mathbb{Z}/9\mathbb{Z}, \mathbb{Z}/6\mathbb{Z},\mathbb{Z}/3\mathbb{Z},\mathbb{Z}/2\mathbb{Z},\mathcal{O}\}.$$ Moreover, if $K/\mathbb{Q}$ is a cubic field and $T$ is the torsion subgroup of $E(K)$ then $T$ is isomorphic to one of the following groups:

\begin{enumerate}
\item[(1)] $\mathbb{Z}/9\mathbb{Z}$, \quad if $c = 16$ and  $K = \mathbb{Q}(r)$ with $r$  satisfying  $r^3-3r^2+1=0$,

\item[(2)] $\mathbb{Z}/6\mathbb{Z},
 \left\{
\begin{array}{l} 
\mbox{if }c \mbox{ is both square and cube in }K,
\\   \mbox{or }c = -27 \mbox{ and }4 \mbox{ is a cube in } K,
 \end{array}
 \right.
 $
\item[(3)] $\mathbb{Z}/3\mathbb{Z},
 \left\{
\begin{array}{l} 
\mbox{if }c (\neq 16) \mbox{ is a square but not a cube in }K,\\ \mbox{or } c = 16 \mbox{ and } K \neq \mathbb{Q}(r) \mbox{ with }r \mbox{ satisfying }  r^3-3r^2+1=0,\\ \mbox{or }4c  \mbox{ is a cube and }-3c \mbox{ is a square in }K,
\end{array}
\right.$
\item[(4)] 
$\mathbb{Z}/2\mathbb{Z}, 
\left\{
\begin{array}{l}
\mbox{if } c(\neq -27) \mbox{ is a cube but not a square in }K,\\
\mbox{or }c = -27\mbox{ but }4 \mbox{ is not a cube in K},
\end{array}
\right.
$  
\item[(5)] $\{\mathcal{O}\}$, otherwise.
\end{enumerate}
\end{theorem}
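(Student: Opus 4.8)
\emph{Step 1 (reducing to five candidate groups).} Recall from \cite{kna} that for sixth-power-free $c$ the group $E(\mathbb{Q})_{\mathrm{tors}}$ is $\mathbb{Z}/6\mathbb{Z}$ when $c=1$, it contains a point of order $3$ exactly when $c$ is a perfect square or $c=-432$ (these being the two possible shapes of a rational $3$-torsion point, from the factorization $\psi_3=3x(x^3+4c)$ of the $3$-division polynomial), it contains a point of order $2$ exactly when $c$ is a perfect cube, and is trivial otherwise. Base change to a cubic field $K$ can only enlarge this, and I claim $E(K)_{\mathrm{tors}}\in\{\mathcal{O},\mathbb{Z}/2\mathbb{Z},\mathbb{Z}/3\mathbb{Z},\mathbb{Z}/6\mathbb{Z},\mathbb{Z}/9\mathbb{Z}\}$. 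There is no full $2$-torsion: $\mathbb{Q}(E[2])$ is the splitting field of $x^3+c$, which contains $\sqrt{-3}$ (the discriminant being $-27c^2$), so it has even degree and $E[2]\subseteq E(K)$ is impossible when $[K:\mathbb{Q}]=3$; the Weil pairing rules out full $3$-torsion in the same way. There is no point of order $4$: if such a point doubles to $(\theta,0)$ with $\theta^3=-c\neq0$, the duplication formula forces its $x$-coordinate to satisfy $x^2-2\theta x-2\theta^2=0$, i.e.\ $x=\theta(1\pm\sqrt3)$, which cannot lie in a cubic field. There is no point of order $5$ or $7$: for a $j=0$ curve over $\mathbb{Q}$ the mod-$p$ image lies in the normalizer of a Cartan subgroup of $\mathrm{GL}_2(\mathbb{F}_p)$ and surjects onto $(\mathbb{Z}/p\mathbb{Z})^\times$ under the determinant, which forces the Galois orbit of any nonzero $p$-torsion point to have length greater than $3$ for $p\in\{5,7\}$ (alternatively this follows from the CM classification of \cite{ccrs}). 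Hence only $2$ and $3$ divide $|E(K)_{\mathrm{tors}}|$, and together with the exclusions above the five groups listed are the only possibilities; in particular $\Phi^M_{\mathbb{Q}}(3)$ is contained in this set, and realizing each group by an explicit pair $(c,K)$ will give equality.

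\emph{Step 2 (the conditions for $\mathbb{Z}/2\mathbb{Z}$, $\mathbb{Z}/3\mathbb{Z}$ and $\mathbb{Z}/6\mathbb{Z}$).} These I would read off from $\psi_2$ and $\psi_3$, using repeatedly that a cubic field has no quadratic subfield. A $2$-torsion point $(\theta,0)$ lies in $K$ iff $-c$, equivalently $c$, is a cube in $K$. A $3$-torsion point coming from the root $x=0$ of $\psi_3$ is $(0,\pm\sqrt c)$, which lies in $K$ iff $\sqrt c\in K$, forcing $c$ to be a rational square. A $3$-torsion point coming from a root $\xi$ of $x^3+4c$ is $(\xi,\pm\sqrt{-3c})$, which lies in $K$ iff $\xi\in K$ (i.e.\ $4c$ is a cube in $K$) and $\sqrt{-3c}\in K$, the latter forcing $-3c$ to be a rational square. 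One then checks that the only sixth-power-free $c$ that is a rational cube with $-3c$ a rational square is $c=-27$, and the only one with $4c$ a rational cube and $-3c$ a rational square is $c=-432$; for these the only new ingredient a cubic extension can supply is $\sqrt[3]4$ (note $\sqrt[3]{-4c}=3\sqrt[3]4$ when $c=-27$, and $\sqrt[3]{432}=6\sqrt[3]2$ when $c=-432$), and for a cubic field this forces $K=\mathbb{Q}(\sqrt[3]2)$. Assembling these facts, and reading the stated cases with case (2) taking priority over case (3) (so that the ``both a square and a cube in $K$'' branch supersedes the plain-square branch), gives the descriptions of when $E(K)_{\mathrm{tors}}$ is $\mathbb{Z}/2\mathbb{Z}$ or $\mathbb{Z}/3\mathbb{Z}$; and $\mathbb{Z}/6\mathbb{Z}$ occurs precisely when $K$ carries both a $2$- and a $3$-torsion point, i.e.\ when $c$ becomes a sixth power in $K$ (equivalently both a square and a cube in $K$, so that $E$ becomes $y^2=x^3+1$ over $K$) or in the exceptional cases $c\in\{-27,-432\}$ with $4$ a cube in $K$.

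\emph{Step 3 (the main obstacle: the $\mathbb{Z}/9\mathbb{Z}$ case).} Here I must show that a point of order $9$ appears only for $c=16$, and only over $K=\mathbb{Q}(r)$ with $r^3-3r^2+1=0$ — the cyclic cubic field $\mathbb{Q}(\zeta_9)^+$. If $P\in E(K)$ has order $9$, then $E(K)[3]\cong\mathbb{Z}/3\mathbb{Z}$ (no full $3$-torsion over a cubic field) is generated by $3P$, so by Step 2 either $c=a^2$ with $3P=(0,\pm a)\in E(\mathbb{Q})$, or $-3c$ is a rational square and $3P=(\xi,\pm\sqrt{-3c})$ with $4c$ a cube in $K$. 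In either case one is reduced to deciding, for the resulting finite list of shapes of $c$, whether the degree-$9$ polynomial carving out the $x$-coordinates of the multiplication-by-$3$ preimages of $3P$ has a root in a cubic field — equivalently whether the corresponding fibre of $X_1(9)\to X_1(3)$ carries a cubic point compatible with the rational structure of $E$. The hard part will be proving this occurs for no shape of $c$ other than $c=16$; I would establish it either by directly factoring the $9$-division polynomial of $y^2=x^3+c$ over the finitely many candidate fields, or by invoking the known list of elliptic curves over $\mathbb{Q}$ that acquire a $9$-torsion point over a cubic field. Once $c=16$ and $K=\mathbb{Q}(\zeta_9)^+$ are isolated, exhibiting the explicit $9$-torsion point and noting that $\sqrt[3]2\notin\mathbb{Q}(\zeta_9)^+$ (so there is no further $2$-torsion) shows the torsion there is exactly $\mathbb{Z}/9\mathbb{Z}$. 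Collecting all the cases and producing one example realizing each of $\mathcal{O}$, $\mathbb{Z}/2\mathbb{Z}$, $\mathbb{Z}/3\mathbb{Z}$, $\mathbb{Z}/6\mathbb{Z}$, $\mathbb{Z}/9\mathbb{Z}$ then yields $\Phi^M_{\mathbb{Q}}(3)=\{\mathbb{Z}/9\mathbb{Z},\mathbb{Z}/6\mathbb{Z},\mathbb{Z}/3\mathbb{Z},\mathbb{Z}/2\mathbb{Z},\mathcal{O}\}$.
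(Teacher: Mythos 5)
Your Steps 1--2 are essentially sound and broadly parallel the paper's Lemmas on $2$-, $3$-, $4$- and $6$-torsion, though by a partly different route: where you use the Weil pairing, the discriminant $-27c^2$ of $x^3+c$, and the mod-$p$ image for a $j=0$ curve, the paper excludes primes $q\ge 5$ uniformly by reducing modulo a prime $p\equiv 2\pmod{3q}$ above which the residue field has $p^{f}+1$ points ($f\in\{1,3\}$), so that $q\mid 2^{f}+1$ gives a contradiction. Your version as written only treats $p\in\{5,7\}$ explicitly (and only sketchily), yet you conclude that no prime $\ge 5$ divides $|T|$; the primes $11,13,\dots$ are left to an unspecified appeal to the CM classification, and the orders $18$ and $27$ are never excluded (the paper needs a separate lemma, ultimately a machine check for $c=16$ over $\mathbb{Q}(r)$, to rule out $27$). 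Incidentally, your analysis of the exceptional $\mathbb{Z}/6\mathbb{Z}$ cases turns up $c=-432$ with $K=\mathbb{Q}(\sqrt[3]{2})$ in addition to $c=-27$; this case is genuinely there (the rational $3$-point $(12,36)$ together with the $2$-point $(6\sqrt[3]{2},0)$), and it is worth flagging that the theorem's case list, read literally, does not accommodate it.

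The genuine gap is Step 3. You correctly reduce to deciding when the degree-$9$ equation for the $x$-coordinates of points $P$ with $3P$ of order $3$ has a root in a cubic field, but you then defer the entire argument to ``directly factoring the $9$-division polynomial over the finitely many candidate fields'' or ``invoking the known list.'' Neither works as stated: in the main case $3P=(0,\pm\sqrt c)$ the field $K$ is \emph{not} yet confined to a finite list --- a priori any cubic field could contain a root of $x^9-96cx^6+48c^2x^3+64c^3$ --- so there is nothing finite to factor over, and $\Phi_{\mathbb{Q}}(3)$ tells you $\mathbb{Z}/9\mathbb{Z}$ occurs for some rational curve over some cubic field but not for which Mordell curves or which fields. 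The paper's proof supplies exactly the missing idea: it rewrites that degree-$9$ equation as $(x^3+4c)^3=27\cdot(4c)\cdot x^6$, which forces $4c=v^3$ to be a cube in $K$; since $\omega\notin K$ one gets $x^3+v^3=3vx^2$, so $r=x/v$ satisfies the irreducible $r^3-3r^2+1=0$ and $K=\mathbb{Q}(r)$ is the cyclic cubic field; normality of $K$ then forces $4c$ to be a rational cube and $c$ a rational square, whence $c=16$. (The other branch, $x(3P)^3+4c=0$, is killed by an irreducible degree-$9$ polynomial in $x/a$.) Without some substitute for this computation your Step 3 is a plan, not a proof, and it is the heart of the theorem.
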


\smallskip

\begin{theorem}\label{cubicK}
Let $E: y^2 = x^3 + c$  be any Mordell curve defined over a cubic field. Then  $${\Phi}^{M}(3) = \{\mathbb{Z}/9\mathbb{Z},\mathbb{Z}/6\mathbb{Z},\mathbb{Z}/3\mathbb{Z},\mathbb{Z}/2\mathbb{Z},\mathcal{O}\}.$$ 
Moreover,  if $K/\mathbb{Q}$ is a cubic field and $T$ is the torsion subgroup of $E(K)$ then $T$ is isomorphic to one of the following groups:
\begin{enumerate}
\item[(1)] $\mathbb{Z}/9\mathbb{Z},
 \left\{
\begin{array}{l} 
\mbox{if }c \mbox{ is a square and }4c \mbox{ is a cube in }K \mbox{ with }K = \mathbb{Q}(r) \mbox{ where } r\\ \quad \mbox{ satisfying }  r^3-3r^2+1=0,
 \end{array}
 \right.
$ 
 \item[(2)] $\mathbb{Z}/6\mathbb{Z},
 \left\{
\begin{array}{l} 
\mbox{if }c \mbox{ is both square and cube in }K,
\\   \mbox{or }c,4 \mbox{ are cubes }  \mbox{ and }-3c \mbox{ is a square in }K,
 \end{array}
 \right.
 $
\item[(3)] $\mathbb{Z}/3\mathbb{Z},
 \left\{
\begin{array}{l} 
\mbox{if }c \mbox{ is a square and }4c \mbox{ is a cube in }K \mbox{ with }K \neq \mathbb{Q}(r) \mbox{ where } r\\ \quad \mbox{ satisfying }  r^3-3r^2+1=0,\\
\mbox{or }c \mbox{ is a square but }4c \mbox{ is not a cube in }K,\\ 
\mbox{or }4c  \mbox{ is a cube and }-3c \mbox{ is a square in }K,
\end{array}
\right.$

\item[(4)] 

$\mathbb{Z}/2\mathbb{Z},
\left\{
\begin{array}{l} 
\mbox{if } c \mbox{ is a cube but not a square in }K \mbox{ and }-3c \mbox{ is not a square in }K,\\
\mbox{or } c \mbox{ is a cube but not a square in }K \mbox{ and }4 \mbox{ is not a cube in }K,
\end{array}
\right.$  
\item[(5)] $\{\mathcal{O}\}$, otherwise.

\end{enumerate}
\end{theorem}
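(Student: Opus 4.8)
The plan is to combine an explicit analysis of the low-order division polynomials of $E:y^2=x^3+c$ with the fact that every Mordell curve has $j$-invariant $0$, hence complex multiplication by $\mathbb{Z}[\omega]$ with $\omega$ a primitive cube root of unity, and with the elementary observation that any cubic field $K$ satisfies $K\cap\mathbb{Q}(\omega)=\mathbb{Q}$, i.e. $\omega\notin K$. First I would split into two cases: either $c$ generates $K$ over $\mathbb{Q}$, or $c\in\mathbb{Q}$; in the latter case $E$ is already defined over $\mathbb{Q}$ and base-changed to $K$, so the claim is exactly Theorem~\ref{cubicQ}, which I may assume. Throughout I will use that, up to $K$-isomorphism, $E$ depends only on the class of $c$ in $K^\times/(K^\times)^6$ and that any two Mordell curves over $K$ are sextic twists of one another; this is what lets the conditions in the statement be phrased purely in terms of whether $c$, $4c$, $-3c$ are squares or cubes in $K$.

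Next I would pin down the $2$- and $3$-torsion directly from division polynomials. A nontrivial $2$-torsion point is $(x_0,0)$ with $x_0^3=-c$, so $E(K)$ has a point of order $2$ iff $c\in(K^\times)^3$; and since $x^3+c$ cannot split completely over $K$ (its splitting field contains $\mathbb{Q}(\omega)$) the $2$-part of $T:=E(K)_{\mathrm{tors}}$ is cyclic. The third division polynomial is $3x(x^3+4c)$, so a point of order $3$ over $K$ has either $x=0$, forcing $c\in(K^\times)^2$, or $x^3=-4c$ with $y^2=-3c$, forcing $4c\in(K^\times)^3$ and $-3c\in(K^\times)^2$; these two cases are mutually exclusive, since $c$ and $-3c$ both being squares in $K$ would give $\sqrt{-3}\in K$. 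By the Weil pairing $\mathbb{Q}(\omega)\subseteq\mathbb{Q}(E[3])$, so $E[3]\not\subseteq E(K)$ and the $3$-part of $T$ is cyclic too. A point of order $4$ has $x$-coordinate with $x^3=c(-10\pm6\sqrt3)$, which over a cubic $K$ would force $\sqrt3\in K$; hence there is no $4$- (or higher $2$-power) torsion, and $T\cong\mathbb{Z}/2^a3^b\mathbb{Z}$ with $a\le1$. Finally, no prime $p\ge5$ divides $|T|$: a point of order $p$ on a curve with CM by $\mathbb{Z}[\omega]$ over a cubic field generates an extension of $\mathbb{Q}(\omega)$ far too large to be contained in one (equivalently, one may consult the CM-torsion data in \cite{ccrs}).

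The crux — and the step I expect to require the most work — is bounding $b$ and, when $b$ attains its maximum, identifying the cubic field $K=\mathbb{Q}(r)$ with $r^3-3r^2+1=0$, the cubic subfield of $\mathbb{Q}(\zeta_9)$. Write $(1-\omega)$ for the prime of $\mathbb{Z}[\omega]$ above $3$; the subgroup $\langle(0,\sqrt c)\rangle$ equals $E[(1-\omega)]=\ker[1-\omega]$ and is $[\omega]$-stable (its points have $x$-coordinate $0$), so every point $P$ with $3P\in E[(1-\omega)]$ automatically lies in $E[(1-\omega)^3]$, and a point of order $9$ of this kind generates the cyclic $\mathbb{Z}[\omega]$-module $E[(1-\omega)^3]\cong\mathbb{Z}[\omega]/(1-\omega)^3$, hence has field of definition over $\mathbb{Q}(\omega)$ equal to the full $(1-\omega)^3$-division field. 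A direct computation with the ninth division polynomial identifies this field for $y^2=x^3+16$ as $\mathbb{Q}(\zeta_9)$, whose unique cubic subfield is $\mathbb{Q}(r)$, so $y^2=x^3+16$ gains a cyclic order-$9$ subgroup over $\mathbb{Q}(r)$ and over no other cubic field. For general $c$: ``$c\in(K^\times)^2$ and $4c\in(K^\times)^3$'' is equivalent to $c/16\in(K^\times)^6$, i.e. $E\cong_K(y^2=x^3+16)$, giving case~(1) and the first branch of case~(3); when $c$ is a square but $4c$ is not a cube one verifies, via the sextic twisting character (which then has order $3$) and the semilinear action of complex conjugation, that no order-$9$ point descends to $K$; and if instead $3P$ lies in a non-$[\omega]$-stable order-$3$ subgroup (the ``$4c$ a cube and $-3c$ a square'' case) then $E[3]$ is already $K(\omega)$-rational and the remaining $3$-power division field fixes no order-$9$ point over $K$. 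This determines exactly when $\mathbb{Z}/9\mathbb{Z}$ occurs; it also forces $b\le2$, since a point of order $27$ would generate $E[(1-\omega)^5]$ as a $\mathbb{Z}[\omega]$-module and so would have field of definition strictly larger than $\mathbb{Q}(\zeta_9)$, too big for a cubic field, and it rules out $\mathbb{Z}/18\mathbb{Z}$, because an order-$9$ point forces $E\cong_K(y^2=x^3+16)$ with $K=\mathbb{Q}(r)$ while a simultaneous order-$2$ point forces $c\in(K^\times)^3$, hence $c\in(K^\times)^6$ and $E\cong_K(y^2=x^3+1)$, which is incompatible with the former as $\sqrt[3]{2}\notin\mathbb{Q}(r)$.

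Finally I would assemble the classification by running through the logically exhaustive possibilities for $(K,c)$ according to whether $c\in(K^\times)^2$, $c\in(K^\times)^3$, $4c\in(K^\times)^3$, $-3c\in(K^\times)^2$ and whether $K=\mathbb{Q}(r)$, matching each against the five cases of the statement (read in order of priority) and recording the small compatibilities that make the list well posed — for instance, that $c$ cannot be both a square and a cube in a cubic $K$ while $4$ is also a cube there, since for $K=\mathbb{Q}(r)$ this would force $\sqrt[3]{2}\in\mathbb{Q}(r)$, impossible on discriminant grounds, and similarly for any other cubic $K$. The equality $\Phi^{M}(3)=\{\mathbb{Z}/9\mathbb{Z},\mathbb{Z}/6\mathbb{Z},\mathbb{Z}/3\mathbb{Z},\mathbb{Z}/2\mathbb{Z},\mathcal{O}\}$ then follows by exhibiting, for each group, an explicit pair $(K,c)$: $c=16$ over $K=\mathbb{Q}(r)$ gives $\mathbb{Z}/9\mathbb{Z}$; $c=1$ over any cubic field gives $\mathbb{Z}/6\mathbb{Z}$; $c=4$ over a cubic field in which $16$ is not a cube gives $\mathbb{Z}/3\mathbb{Z}$; $c=2$ over $\mathbb{Q}(\sqrt[3]{2})$ gives $\mathbb{Z}/2\mathbb{Z}$; and $c=5$ over $\mathbb{Q}(\sqrt[3]{2})$ gives $\mathcal{O}$.
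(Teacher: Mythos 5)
Your overall architecture is sound and in several places genuinely different from the paper's: you exploit the CM structure ($E[(1-\omega)^k]$ as a cyclic $\mathbb{Z}[\omega]$-module), you reduce the order-$9$ question to the single curve $y^2=x^3+16$ via the correct observation that ``$c$ square and $4c$ cube'' is equivalent to $c/16\in(K^\times)^6$, and your exclusion of $\mathbb{Z}/18\mathbb{Z}$ (order $9$ forces $K=\mathbb{Q}(r)$, order $2$ then forces $4\in(K^\times)^3$, hence $\sqrt[3]{2}\in\mathbb{Q}(r)$, impossible since $\mathbb{Q}(r)$ is cyclic over $\mathbb{Q}$) is cleaner than the paper's, which instead checks irreducibility of an explicit degree-$9$ polynomial with {\it magma}. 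The paper's route throughout is more computational: reduction modulo primes $p\equiv 2\pmod{3q}$ (resp.\ $p\equiv 2\pmod{27}$) using the supersingular point counts $|\overline{E}(\mathbb{F}_{p^f})|=p^f+1$ for $f\in\{1,3\}$ to kill orders $q\ge 5$ and $27$, and explicit manipulation of the $9$-division polynomial to arrive at $r^3-3r^2+1=0$ and $\gamma^3-9\gamma^2-9\gamma+9=0$. Your $2$-, $3$-, $4$-torsion analyses coincide with the paper's.

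There are, however, real gaps. The most serious is the elimination of primes $q\ge 5$: the assertion that a point of order $q$ on a $j=0$ curve over a cubic field ``generates an extension of $\mathbb{Q}(\omega)$ far too large'' is not a proof, and it is dangerously close to false as a heuristic --- for $p=7$ the relevant extension of $\mathbb{Q}(\omega)$ can be of degree $3$ (the paper itself exhibits $\mathbb{Z}/7\mathbb{Z}$ for a Mordell curve over $\mathbb{Q}(2^{1/3},\omega)$), and $\mathbb{Z}/14\mathbb{Z}$ genuinely occurs in $\Phi^{CM}(3)$ for other CM discriminants, so the argument must use both $j=0$ and the fact that a cubic field does not contain $\omega$; neither is made to do any work in your sketch. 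You need either the paper's reduction argument ($q\mid 2^{f_j}+1$ with $f_j\in\{1,3\}$ forces $q\mid 9$) or a precise citation of the discriminant-$(-3)$, degree-$3$ entries of the tables in \cite{ccrs}. Similarly, the claims that no order-$9$ point exists when $4c$ is not a cube (``via the sextic twisting character and the semilinear action of complex conjugation'') and that an order-$27$ point would have too large a field of definition are asserted rather than proved; the latter concerns only a cyclic subgroup, so the Weil pairing gives you nothing directly and you must actually compute the degree of the relevant ray class field or fall back on a reduction argument. Finally, one of your ``compatibilities'' is false as stated: $c$ can perfectly well be both a square and a cube in a cubic field in which $4$ is also a cube (take $c=1$ and $K=\mathbb{Q}(\sqrt[3]{2})$); the genuine incompatibility, which you do use correctly in the $\mathbb{Z}/18\mathbb{Z}$ exclusion, only arises when $K=\mathbb{Q}(r)$ is additionally forced.
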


\begin{theorem}\label{sexticQ}
Let $E: y^2 = x^3 + c$  be a Mordell curve for any $6$-th power-free integer $c$. Then  $${\Phi}^{M}_{\mathbb{Q}}(6) = {\Phi}^{M}_{\mathbb{Q}}(3) \cup \{\mathbb{Z}/9\mathbb{Z} \oplus \mathbb{Z}/3\mathbb{Z}, \mathbb{Z}/6\mathbb{Z} \oplus \mathbb{Z}/6\mathbb{Z}, \mathbb{Z}/6\mathbb{Z} \oplus \mathbb{Z}/2\mathbb{Z}, \mathbb{Z}/3\mathbb{Z} \oplus \mathbb{Z}/3\mathbb{Z}, \mathbb{Z}/2\mathbb{Z} \oplus \mathbb{Z}/2\mathbb{Z}\}.$$
Moreover, if $K/\mathbb{Q}$ be any sextic field and $T$ is the torsion subgroup of $E(K)$, then $T$ is isomorphic to one of the following groups:
\begin{enumerate}

\item[(1)] $\mathbb{Z}/9\mathbb{Z},
 \left\{
\begin{array}{l} 
\mbox{if }c \mbox{ is a square and }4c \mbox{ is a cube in }K, \omega \notin K \mbox{ and }\mathbb{Q}(r) \subset K \mbox{ with } r\\ \quad \mbox{ satisfying }  r^3-3r^2+1=0,
 \end{array}
 \right.
$
\item[(2)] $\mathbb{Z}/6\mathbb{Z},
 \left\{
\begin{array}{l} 
\mbox{if }c \mbox{ is both square and cube in }K \mbox{ and }\omega \notin K,\\
 \mbox{or } c,4 \mbox{ are cubes in } K \mbox{ and } -3c \mbox{ is a square in }K  \mbox{ with }\omega \notin K,
 \end{array}
 \right.
 $
\item[(3)] $\mathbb{Z}/3\mathbb{Z},
 \left\{
\begin{array}{l}
\mbox{if }c \mbox{ is a square but not a cube in }K \mbox{ and }4c \mbox{ is not a cube in }K,\\
\mbox{or }c \mbox{ is a square and }4c \mbox{ is a cube in }K, \omega \notin K \mbox{ and }\mathbb{Q}(r) \not\subset K \mbox{ with } r\\ \quad \mbox{ satisfying }  r^3-3r^2+1=0,\\
\mbox{or }  c \mbox{ is not a cube in }K, 4c \mbox{ is a cube and } -3c \mbox{ is a square in }K \mbox{ with }\omega \notin K,\\
\end{array}
\right.$
\item[(4)] 
$\mathbb{Z}/2\mathbb{Z}, 
\left\{
\begin{array}{l}
 \mbox{if } c \mbox{ is a cube but not a square in }K \mbox{ and }4 \mbox{ is not a cube in }K,\\
 \mbox{or } c \mbox{ is a cube but not a square in }K \mbox{ and }-3c \mbox{ is not a square in }K,\\
 \end{array}
\right.$

\item[(5)] $\mathbb{Z}/9\mathbb{Z} \oplus \mathbb{Z}/3\mathbb{Z},
 \left\{
\begin{array}{l} 
\mbox{if }c \mbox{ is a square and }4c \mbox{ is a cube in }K, \omega \in K \mbox{ and }\mathbb{Q}(r) \subset K\mbox{ with } r\\ \quad \mbox{ satisfying }  r^3-3r^2+1=0,
 \end{array}
 \right.
$ 

\item[(6)] 
$\mathbb{Z}/6\mathbb{Z} \oplus \mathbb{Z}/6\mathbb{Z}$, if $c$ is both square and cube in $K$  with $4$ is a cube in $K$ and $\omega \in K$,
  
\item[(7)]
$\mathbb{Z}/6\mathbb{Z} \oplus \mathbb{Z}/2\mathbb{Z}$,
 if $c$ is both square and cube in $K$ with $4$ is not a cube in $K$ and $ \omega \in K$,

\item[(8)] 
$\mathbb{Z}/3\mathbb{Z} \oplus \mathbb{Z}/3\mathbb{Z},
 \left\{
\begin{array}{l} 
\mbox{if }c \mbox{ is a square and not a cube in }K, 4c \mbox{ is a cube in }K, \omega \in K \mbox{ and }\\ \quad \mathbb{Q}(r) \not\subset K\mbox{ with } r \mbox{ satisfying }  r^3-3r^2+1=0,
 \end{array}
 \right.
$ 
\item[(9)] $\mathbb{Z}/2\mathbb{Z} \oplus \mathbb{Z}/2\mathbb{Z}$, if $c$ is a cube but not a square in $K$ with $\omega \in K$,
\item[(10)] $\{\mathcal{O}\}$, otherwise.
\end{enumerate}
\end{theorem}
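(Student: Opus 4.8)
The plan is to determine, for every relevant order $n$, exactly when $E$ has a $K$-rational point of order $n$ purely in terms of the arithmetic data $(c,K)$, and then to assemble these facts using the dichotomy $\omega\in K$ versus $\omega\notin K$; throughout one uses that $E$ has $j$-invariant $0$, hence complex multiplication by $\mathbb{Z}[\omega]$, and the already established cubic results, Theorems~\ref{cubicQ}--\ref{cubicK}. The six conditions appearing in the statement are precisely the ones detecting torsion points: $c$ a square in $K$ detects the order-$3$ point $(0,\pm\sqrt c)$; $c$ a cube in $K$ detects an order-$2$ point; $4c$ a cube together with $-3c$ a square detects the order-$3$ point $(\gamma,\pm\sqrt{-3c})$ with $\gamma^3=-4c$; $\omega\in K$ detects completion of partial $2$- or $3$-torsion to $(\mathbb{Z}/2\mathbb{Z})^2$ or $(\mathbb{Z}/3\mathbb{Z})^2$; and $\mathbb{Q}(r)\subseteq K$ detects a point of order $9$ (here $\mathbb{Q}(r)$ is the cyclic cubic field $\mathbb{Q}(\zeta_9)^+$, the splitting field of $r^3-3r^2+1$, since that polynomial has square discriminant). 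So the real content is that nothing else can contribute torsion over a sextic field.

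I would begin with the explicit $2$- and $3$-torsion. The nonzero $2$-torsion points are $(\alpha,0)$ with $\alpha^3=-c$, so $\mathbb{Z}/2\mathbb{Z}\hookrightarrow E(K)$ iff $c$ is a cube in $K$, and $E[2]\subseteq E(K)$ iff moreover $\omega\in K$. From $\psi_3(x)=3x(x^3+4c)$ the nonzero $3$-torsion points are $(0,\pm\sqrt c)$ and $(\gamma,\pm\sqrt{-3c})$ with $\gamma^3=-4c$, so $E(K)$ has a point of order $3$ iff $c$ is a square in $K$, or $4c$ is a cube and $-3c$ is a square in $K$, and $E[3]\subseteq E(K)$ iff $\omega\in K$ together with these. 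Here I would record the elementary implications that organise the case split: $-3=(\omega-\omega^2)^2$, so when $\omega\in K$ the conditions ``$c$ a square'' and ``$-3c$ a square'' coincide; $c$ is a square and a cube in $K$ iff $c$ is a sixth power in $K$; and when $\omega\in K$ any $K$-rational point of order $2$ drags all of $E[2]$ into $E(K)$, so the $2$-part of $E(K)_{\mathrm{tors}}$ is then $\mathcal{O}$ or $(\mathbb{Z}/2\mathbb{Z})^2$, never $\mathbb{Z}/2\mathbb{Z}$. This already explains why no group having a single factor of $2$ together with full $3$-torsion can occur, so that $\mathbb{Z}/6\mathbb{Z}\oplus\mathbb{Z}/3\mathbb{Z}$ is absent.

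For the higher torsion I would use the sixth-power scaling $y^2=x^3+c\ \cong_K\ y^2=x^3+ct^6$. If $c$ is a square and $4c$ a cube in $K$, then, writing $c=\mu^2$ and $4c=\nu^3$, the element $16/c=(4/\mu)^2=(4/\nu)^3$ is simultaneously a square and a cube, hence a sixth power in $K$, so $E\cong_K E_{16}:y^2=x^3+16$; by the explicit $9$-division analysis of $E_{16}$ (as in \cite{kna} and the proof of Theorem~\ref{cubicK}(1)), $E_{16}$ acquires a point of order $9$ over a number field $F$ precisely when $\mathbb{Q}(r)\subseteq F$, so in the sextic case this forces $\mathbb{Q}(r)\subseteq K$, and moreover $E_{16}$ has no point of order $27$ and no copy of $(\mathbb{Z}/9\mathbb{Z})^2$ over a sextic field (the former because $\mathbb{Q}(\zeta_{27})$ would have to embed in too small a field, the latter because $\mathbb{Q}(E_{16}[9])$ is ramified at $2$ and hence strictly larger than $\mathbb{Q}(\zeta_9)$). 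Likewise, if $c$ is both a square and a cube in $K$ then $E\cong_K y^2=x^3+1$, which has no point of order $9$ over any sextic field, since that would require $\mathbb{Q}(\sqrt[3]{2})$ and $\mathbb{Q}(r)$ to lie in a common sextic field, impossible by degree. Finally, to exclude the remaining primes: $E$ has no $K$-point of order $4$ (from $\psi_4$, or the $2$-descent structure of Mordell curves) and no $K$-point of order $5$ or $7$ — for $\ell\in\{5,7\}$ the mod-$\ell$ image of a $j=0$ curve over $\mathbb{Q}$ lies in the normalizer of a Cartan subgroup, and the stabilizer of a nonzero vector has index exceeding $6$, so no sextic field supports such a point (alternatively one invokes the list $\Phi^{CM}(6)$ of \cite{ccrs} together with $j(E)=0$).

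Assembling: if $\omega\notin K$ then $\mu_3\not\subseteq K$ and $E[2]\not\subseteq E(K)$, so $E(K)_{\mathrm{tors}}$ is cyclic, hence by the above one of $\mathcal{O},\mathbb{Z}/2\mathbb{Z},\mathbb{Z}/3\mathbb{Z},\mathbb{Z}/6\mathbb{Z},\mathbb{Z}/9\mathbb{Z}$, occurring under exactly the conditions of Theorem~\ref{cubicQ}; this gives cases (1)--(4), (10). If $\omega\in K$ then the $2$-part is $\mathcal{O}$ or $(\mathbb{Z}/2\mathbb{Z})^2$ and the $3$-part is one of $\mathcal{O},\mathbb{Z}/3\mathbb{Z},(\mathbb{Z}/3\mathbb{Z})^2,\mathbb{Z}/9\mathbb{Z},\mathbb{Z}/9\mathbb{Z}\oplus\mathbb{Z}/3\mathbb{Z}$; sorting out which combinations of ``$c$ a square / $c$ a cube / $4c$ a cube / $\mathbb{Q}(r)\subseteq K$'' can hold simultaneously inside a field of degree $6$ — e.g. $\mathbb{Q}(\omega)$ and $\mathbb{Q}(r)$ compose to the sextic $\mathbb{Q}(\zeta_9)$, whereas ``$4$ a cube'' (i.e. $\mathbb{Q}(\sqrt[3]{2})\subseteq K$) is incompatible with $\mathbb{Q}(r)\subseteq K$ in degree $6$ — yields cases (5)--(9). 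Realizability of each of the five new groups (hence the displayed description of $\Phi^M_{\mathbb{Q}}(6)$) is exhibited by explicit pairs: $y^2=x^3+16$ over $\mathbb{Q}(\zeta_9)$ for $\mathbb{Z}/9\mathbb{Z}\oplus\mathbb{Z}/3\mathbb{Z}$; $y^2=x^3+1$ over $\mathbb{Q}(\omega,\sqrt[3]{2})$ for $\mathbb{Z}/6\mathbb{Z}\oplus\mathbb{Z}/6\mathbb{Z}$ and over $\mathbb{Q}(\omega,\sqrt[3]{3})$ for $\mathbb{Z}/6\mathbb{Z}\oplus\mathbb{Z}/2\mathbb{Z}$; $y^2=x^3-3$ over the splitting field of $x^3+12$ for $\mathbb{Z}/3\mathbb{Z}\oplus\mathbb{Z}/3\mathbb{Z}$; and $y^2=x^3+2$ over the splitting field of $x^3-2$ for $\mathbb{Z}/2\mathbb{Z}\oplus\mathbb{Z}/2\mathbb{Z}$; while $\Phi^M_{\mathbb{Q}}(3)\subseteq\Phi^M_{\mathbb{Q}}(6)$ follows by enlarging a realizing cubic field to a sextic one in which no extra torsion is created. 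I expect the main obstacle to be precisely the higher-torsion input — proving that, after the scaling reduction, the order-$9$ locus is exactly $\mathbb{Q}(r)$ and that orders $4,5,7,27$ and $(\mathbb{Z}/9\mathbb{Z})^2$ are genuinely impossible over a sextic field — since once that is in hand the rest is the careful but routine bookkeeping of which of the six arithmetic conditions can coexist in degree $6$.
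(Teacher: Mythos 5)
Your overall architecture (explicit $2$-, $3$- and $9$-division analysis, exclusion of the remaining orders, then bookkeeping over the dichotomy $\omega\in K$ versus $\omega\notin K$) matches the paper's, and several of your organising observations (e.g.\ that $\omega\in K$ forces the $2$-part to be $\mathcal O$ or $(\mathbb Z/2\mathbb Z)^2$, killing $\mathbb Z/3\mathbb Z\oplus\mathbb Z/6\mathbb Z$; the sixth-power scaling to $E_{16}$) are correct and in one place cleaner than the paper. But the exclusion steps, which you yourself identify as the main obstacle, contain genuine gaps. The most serious is the elimination of prime orders $q>3$. Your fallback of ``invoking $\Phi^{CM}(6)$ together with $j=0$'' cannot work: that list contains $\mathbb Z/7\mathbb Z$, $\mathbb Z/14\mathbb Z$, $\mathbb Z/19\mathbb Z$ and $\mathbb Z/26\mathbb Z$, and indeed Theorem~\ref{sexticK} shows $\mathbb Z/7\mathbb Z$, $\mathbb Z/19\mathbb Z$ and $\mathbb Z/2\mathbb Z\oplus\mathbb Z/14\mathbb Z$ genuinely occur for Mordell curves over sextic fields with $c\notin\mathbb Q$; so the rationality of $c$ must enter essentially. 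Your Cartan-normalizer claim (``the stabilizer of a nonzero vector has index exceeding $6$'') is asserted only for $\ell\in\{5,7\}$, is not justified (for $\ell\equiv 1\pmod 3$ the Cartan is split and one must rule out small swap-stable subgroups with surjective determinant before concluding the orbit has size $\ge 12$), and says nothing about $\ell=13,19$. The paper's Lemma~\ref{rq} does this uniformly: choose a rational prime $p\equiv 2\pmod{3q}$ of good reduction; every residue field is $\mathbb F_{p^f}$ with $f\mid 6$, Lemma~\ref{dey1} gives $|\overline E(\mathbb F_{p^f})|\mid (p^3+1)^2$, and $p^3+1\equiv 9\pmod q$ kills $q>3$. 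Some version of this (or a completed Galois-image argument covering all $q>3$) is indispensable.

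Two further justifications you give are incorrect as stated, though the conclusions are right. For order $27$: a single $K$-point of order $27$ does not force $\mathbb Q(\zeta_{27})\subseteq K$ (the Weil pairing only constrains full level structures), so your degree count proves nothing; the paper instead writes the sextic $K$ as a quadratic extension of $\mathbb Q(r)$ and applies the twist decomposition of Lemma~\ref{twist} to reduce to the cubic-field result Lemma~\ref{K27}. For excluding $(\mathbb Z/9\mathbb Z)^2$: the curve $y^2=x^3+16$ is isomorphic over $\mathbb Q$ to $y^2+y=x^3$ of conductor $27$, so it has \emph{good} reduction at $2$ and $\mathbb Q(E_{16}[9])/\mathbb Q$ is unramified at $2$; your ramification argument fails. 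The paper avoids this by directly counting the points of order $9$ produced by the division-polynomial analysis (exactly $18$ when $\omega\in K$), which pins down $\mathbb Z/9\mathbb Z\oplus\mathbb Z/3\mathbb Z$. With Lemma~\ref{rq}-type exclusions and these two repairs supplied, the rest of your case assembly and your realizing examples do go through.
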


%\smallskip

\begin{theorem}\label{sexticK}
If $K/\mathbb{Q}$ be any sextic field and $E: y^2 = x^3 + c$  be a Mordell curve for any $c \in K$. Then, $${\Phi}^{M}(6) = {\Phi}^{M}_{\mathbb{Q}}(6) \cup \{\mathbb{Z}/19\mathbb{Z},   \mathbb{Z}/7\mathbb{Z}, \mathbb{Z}/14\mathbb{Z} \oplus \mathbb{Z}/2\mathbb{Z}\}.$$
\end{theorem}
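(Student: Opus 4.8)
The plan is to identify $\Phi^{M}(6)$ with the set of torsion subgroups, over sextic fields, of elliptic curves of $j$-invariant $0$: every such curve is a sextic twist of $y^{2}=x^{3}+1$, hence of the shape $y^{2}=x^{3}+c$ with $c\in K^{\times}$, and all of them have complex multiplication by $\mathbb{Z}[\omega]$, the maximal order of $\mathbb{Q}(\sqrt{-3})$ (here $\omega$ is a primitive cube root of unity; this order has class number one). Thus $\Phi^{M}(6)$ is the subset of the known set $\Phi^{CM}(6)$ of \cite{ccrs} realised by $j=0$ curves, and since a Mordell curve over $\mathbb{Q}$ is a fortiori a Mordell curve over any sextic $K$, the inclusion $\Phi^{M}_{\mathbb{Q}}(6)\subseteq\Phi^{M}(6)$ is automatic. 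So it remains to prove: (i) every $H\in\Phi^{M}(6)$ lies in $\Phi^{M}_{\mathbb{Q}}(6)\cup\{\mathbb{Z}/19\mathbb{Z},\mathbb{Z}/7\mathbb{Z},\mathbb{Z}/14\mathbb{Z}\oplus\mathbb{Z}/2\mathbb{Z}\}$; and (ii) the three extra groups actually occur.

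For (i), I would first read off from $\Phi^{CM}(6)$ in \cite{ccrs} the list of groups available for CM curves; the only additional work is to decide which torsion of order divisible by a prime $\ell$ splitting in $\mathbb{Z}[\omega]$ can occur for a $j=0$ curve over a sextic field, and why only the split primes $7$ and $19$ (and not $13,31,\dots$) survive. The mechanism: if $E/K$ has a point of order $\ell=\pi\bar\pi$, one checks that $E$ acquires a nonzero point of $E[\pi]$ (or of $E[\bar\pi]$) already over $K(\omega)$, so $E[\pi]\subseteq E(K(\omega))$; a short argument (conjugating by $\mathrm{Gal}(K(\omega)/K)$) shows that if $\omega\notin K$ then in fact $E[\ell]\subseteq E(K(\omega))$, which forces $\mathbb{Q}(\zeta_{\ell})\subseteq K(\omega)$ and, together with a twist analysis, contributes no new group, so one may assume $\omega\in K$, whence $K/\mathbb{Q}(\omega)$ is cubic and $E[\pi]\subseteq E(K)$. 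Now the mod-$\pi$ reduction of the canonical Grössencharacter of $\mathbb{Q}(\sqrt{-3})$ (of $\infty$-type $(1,0)$, conductor $(3)$) is a character of $G_{\mathbb{Q}(\omega)}$ of order exactly $\ell-1$; restricting it to a cubic extension divides its order by a divisor of $3$, and a sextic twist (multiplication by a character of order dividing $6$ valued in $\mu_{6}\subseteq\mathbb{F}_{\ell}^{\times}$) can trivialise the result precisely when the restricted order divides $6$, i.e. when $(\ell-1)/3\mid 6$, i.e. $\ell-1\mid 18$; among primes $\equiv1\pmod 3$ this leaves only $\ell\in\{7,19\}$, and in particular $\ell=13$ is excluded since over any cubic extension of $\mathbb{Q}(\omega)$ the restricted character has order $4$ or $12$. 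I would then combine this with: a point of order $4$ on a $j=0$ curve forces full $4$-torsion and hence $\mathbb{Q}(\zeta_{12})\subseteq K$, impossible for a sextic $K$; $27$-torsion and $\ell^{2}$-torsion with $\ell\geq5$ are excluded by degree (or by \cite{ccrs}); the $2$- and $3$-ranks are bounded as in $\Phi^{CM}(6)$; and, since $\omega\in K$ whenever a $7$-torsion point is present, any single rational $2$-torsion point then produces all of $E[2]$ while a $3$-torsion point cannot coexist with a $7$-torsion point over a sextic field — so $\mathbb{Z}/14\mathbb{Z}$ and $\mathbb{Z}/21\mathbb{Z}$ do not appear. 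This yields (i).

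For (ii) I would exhibit explicit curves. For $\mathbb{Z}/7\mathbb{Z}$: since the mod-$\pi$ character of the canonical $j=0$ curve over $\mathbb{Q}(\omega)$ has order $7-1=6$, there is $c_{0}\in\mathbb{Q}(\omega)^{\times}$ (necessarily with $c_{0}\notin\mathbb{Q}^{\times}$ modulo sixth powers, consistently with $\mathbb{Z}/7\mathbb{Z}\notin\Phi^{M}_{\mathbb{Q}}(6)$) for which $E_{c_{0}}\colon y^{2}=x^{3}+c_{0}$ satisfies $E_{c_{0}}[\pi]\subseteq E_{c_{0}}(\mathbb{Q}(\omega))$, giving a $7$-torsion point over $\mathbb{Q}(\omega)$; taking any cubic extension $K$ of $\mathbb{Q}(\omega)$ in which $-c_{0}$ is not a cube, $E_{c_{0}}(K)_{tors}=\mathbb{Z}/7\mathbb{Z}$ by (i). For $\mathbb{Z}/14\mathbb{Z}\oplus\mathbb{Z}/2\mathbb{Z}$: for the same $c_{0}$ (with $-c_{0}$ not a cube in $\mathbb{Q}(\omega)$) take instead $K=\mathbb{Q}(\omega,\sqrt[3]{-c_{0}})$, a sextic field over which $x^{3}+c_{0}$ splits (as $\omega\in K$), so $E_{c_{0}}(K)_{tors}$ contains $E[2]$ together with the $7$-torsion point, hence equals $\mathbb{Z}/2\mathbb{Z}\oplus\mathbb{Z}/14\mathbb{Z}$. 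For $\mathbb{Z}/19\mathbb{Z}$, since $X_{1}(19)$ has genus $7$ I would instead use the CM structure directly: let $L/\mathbb{Q}(\omega)$ be the cyclic degree-$18$ extension cut out by the mod-$\pi$ reduction (for $\ell=19$) of the canonical Grössencharacter, and $K$ its unique cubic subextension over $\mathbb{Q}(\omega)$, a sextic field; over $K$ the restricted character has order $18/3=6$, so a suitable sextic twist $E_{c}$ with $c\in K^{\times}$ has trivial mod-$\pi$ character over $K$, i.e. a $K$-rational point of order $19$, and by (i) $E_{c}(K)_{tors}=\mathbb{Z}/19\mathbb{Z}$. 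In practice one records an explicit defining polynomial for $K$ and an explicit $c$, obtained from the ray class field computation or by direct search.

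I expect the main obstacle to be the $\mathbb{Z}/19\mathbb{Z}$ construction: because $X_{1}(19)$ is not rational one cannot just solve $j(t)=0$, and one must genuinely compute with the Grössencharacter of $\mathbb{Q}(\sqrt{-3})$ — its reduction modulo a prime above $19$, the cyclic degree-$18$ class field it cuts out, the cubic subfield, and the precise Mordell curve over it — and then verify that no extra torsion appears. A secondary difficulty is making (i) airtight uniformly in $\ell$ and in the prime powers, so that $\mathbb{Z}/13\mathbb{Z}$, $\mathbb{Z}/21\mathbb{Z}$, $\mathbb{Z}/38\mathbb{Z}$, $\mathbb{Z}/27\mathbb{Z}$, $\mathbb{Z}/4\mathbb{Z}$, etc. are provably impossible rather than merely unobserved, and checking that the $2$-torsion and $7$-torsion conditions for $\mathbb{Z}/14\mathbb{Z}\oplus\mathbb{Z}/2\mathbb{Z}$ really can hold simultaneously over one sextic field.
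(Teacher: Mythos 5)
Your overall architecture is the same as the paper's: start from the list $\Phi^{CM}(6)$ of \cite{ccrs}, observe that Mordell curves are exactly the $j=0$ (CM by $\mathbb{Z}[\omega]$) curves so this list is an upper bound, prune the groups that cannot occur for $j=0$, and exhibit $\mathbb{Z}/19\mathbb{Z}$, $\mathbb{Z}/7\mathbb{Z}$, $\mathbb{Z}/2\mathbb{Z}\oplus\mathbb{Z}/14\mathbb{Z}$ explicitly. Where you differ is in how the pruning is done: the paper uses elementary reductions mod well-chosen primes (Lemmas \ref{s5}, \ref{s18}), division-polynomial irreducibility checks (Lemmas \ref{s4}, \ref{s14}), and the counting of $2$- and $3$-torsion (Lemma \ref{s36}), whereas you propose Gr\"ossencharacter/Cartan arguments. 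That substitution is where the gaps are. First, your exclusion of order-$4$ points ("a point of order $4$ forces full $4$-torsion, hence $\mathbb{Q}(\zeta_{12})\subseteq K$, impossible for sextic $K$") only works when $\omega\in K$: the CM action is defined over $K(\omega)$, so a $K$-rational point of order $4$ forces $E[4]\subseteq E(K(\omega))$ and $\mathbb{Q}(\zeta_{12})\subseteq K(\omega)$; if $\omega\notin K$ then $[K(\omega):\mathbb{Q}]=12$ and there is no degree obstruction ($K$ could simply contain $\mathbb{Q}(\sqrt{3})$ or $\mathbb{Q}(i)$). The paper's Lemma \ref{r4} does genuine work precisely in this case, deducing $\sqrt{3}\in K$ from $x^3=(-10\pm 6\sqrt{3})a^3$ and then deriving $\sqrt{-3}\in K_1$ for the cubic subfield $K_1$.

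Second, and more seriously, your elimination of $\mathbb{Z}/14\mathbb{Z}$ rests on the assertion that a $7$-torsion point forces $\omega\in K$. Your own mechanism shows only that when $\omega\notin K$ a $K$-rational point of order $7$ cannot be a $\pi$-eigenvector (the nontrivial element of $\mathrm{Gal}(K(\omega)/K)$ swaps $E[\pi]$ and $E[\bar\pi]$), hence $E[7]\subseteq E(K(\omega))$ and $\mathbb{Q}(\zeta_7)\subseteq K(\omega)$. Since $[\mathbb{Q}(\zeta_7):\mathbb{Q}]=6$ and $[K(\omega):\mathbb{Q}]=12$, this is not a contradiction: it leaves open exactly the configuration $K\supseteq\mathbb{Q}(\zeta_7)^{+}$ (or $K=\mathbb{Q}(\zeta_7)$) with the two copies of $\mathbb{Z}/7\mathbb{Z}$ split between $E(K)$ and its $(-3)$-twist via Lemma \ref{twist} — and that configuration, combined with $c$ a cube and $\omega\notin K$, is precisely what would produce $\mathbb{Z}/14\mathbb{Z}$. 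The paper closes this case by a concrete computation (Lemma \ref{s14}): the $7$-division polynomial forces $t=x^3/a^3$ to satisfy a degree-$6$ equation whose cube-root substitution yields a degree-$18$ irreducible polynomial. You would need to supply an argument of comparable substance. Smaller issues: your split-prime Gr\"ossencharacter analysis does not address the inert prime $\ell=5$, so $\mathbb{Z}/10\mathbb{Z}$ is not actually excluded as written; and the $\mathbb{Z}/19\mathbb{Z}$ realization is deferred to a ray class field computation you have not carried out (the paper imports an explicit Tate normal form from \cite{ccrs} and verifies $j=0$ via \eqref{kub}). Your constructions of $\mathbb{Z}/7\mathbb{Z}$ and $\mathbb{Z}/2\mathbb{Z}\oplus\mathbb{Z}/14\mathbb{Z}$ from a sextic twist over $\mathbb{Q}(\omega)$ with a $\pi$-torsion point are sound in outline and essentially match what the paper does.
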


\section{Preliminaries}

\smallskip
For any elliptic curve $E$ defined over a field $K$ and for any positive integer $n$, we define
$$
E(K)[n] = \{P = (x,y) \in E(K): nP = \mathcal{O}\} \cup \{\mathcal{O}\}.
$$

\bigskip

Following lemma gives an ideaa about the structure of torsion subgroup of an elliptic curve over quadratic field. 

\begin{lemma} [\cite{gt}, Corollary 4]
\label{twist}
Let $E$ be an elliptic curve defined over a number field $K$. Let $E^d$ be the $d$-quadratic twist of $E$ for some $d \in K^*/(K^*)^2$. Then for any odd positive integer $n$,
$$
E(K(\sqrt{d}))[n] \cong E(K)[n] \times E^{d}(K)[n].
$$
\end{lemma}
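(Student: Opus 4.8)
The plan is to exhibit an explicit twisting isomorphism and then decompose the $n$-torsion into eigenspaces under the Galois action. Put $L = K(\sqrt{d})$, and take $d$ to represent a nontrivial class of $K^*/(K^*)^2$, so that $L/K$ is a genuine quadratic extension (this is the substantive case; a square $d$ gives $L=K$). Then $\mathrm{Gal}(L/K) = \langle \sigma \rangle$ with $\sigma(\sqrt d) = -\sqrt d$. Writing $E : y^2 = x^3 + ax + b$ and $E^d : y^2 = x^3 + ad^2 x + bd^3$, there is an isomorphism $\psi : E \to E^d$ defined over $L$ by $\psi(x,y) = (dx,\, d\sqrt{d}\, y)$. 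The structural fact I would record first is the anti-equivariance relation $\psi^{\sigma} = [-1]\circ\psi$, where $\psi^\sigma$ is obtained by applying $\sigma$ to the coefficients of $\psi$; this is immediate from $\sigma(d\sqrt d) = -d\sqrt d$ together with the fact that negation on $E^d$ sends $(X,Y)$ to $(X,-Y)$.

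Next I would use that $n$ is odd to split the torsion. Since $\gcd(2,n)=1$, multiplication by $2$ is invertible on the $\mathbb{Z}/n\mathbb{Z}$-module $E(L)[n]$, so the operators $e^{\pm} = \tfrac{1}{2}(1 \pm \sigma)$ are well-defined idempotents in $\mathrm{End}\big(E(L)[n]\big)$ satisfying $e^+ + e^- = 1$ and $e^+e^- = 0$. This yields an internal direct sum $E(L)[n] = M^+ \oplus M^-$ into the $(\pm 1)$-eigenspaces $M^{\pm} = \{P \in E(L)[n] : \sigma P = \pm P\}$. The oddness of $n$ is exactly what makes this decomposition available and canonical, and it is the only place the hypothesis on $n$ is used.

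It then remains to identify the two summands. The invariant part is immediate: $M^+ = E(L)[n]^{\langle\sigma\rangle} = E(K)[n]$, since the $\sigma$-fixed points of $E(L)$ are precisely $E(K)$. For the anti-invariant part I would transport it through $\psi$: as a group isomorphism $\psi$ carries $E(L)[n]$ onto $E^d(L)[n]$, and for $P \in E(L)[n]$ the anti-equivariance gives $\sigma(\psi P) = \psi^{\sigma}(\sigma P) = -\psi(\sigma P)$. Hence $\psi(P)$ is $\sigma$-fixed, i.e. lies in $E^d(K)[n]$, if and only if $\sigma P = -P$, so $\psi$ restricts to an isomorphism $M^- \xrightarrow{\ \sim\ } E^d(K)[n]$. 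Combining the two identifications gives $E(K(\sqrt d))[n] = E(L)[n] \cong E(K)[n]\times E^d(K)[n]$, as claimed.

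The computations are routine; the one subtlety I expect to handle with care is the bookkeeping of the Galois action through $\psi$ — namely verifying $\sigma(\psi P) = \psi^\sigma(\sigma P)$ and the sign in $\psi^\sigma = [-1]\circ\psi$ — together with making explicit that the oddness of $n$ is indispensable, since for even $n$ the idempotents $e^\pm$ are not defined and the decomposition genuinely fails.
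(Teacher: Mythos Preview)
Your argument is correct: the twisting isomorphism $\psi$ satisfies $\psi^{\sigma}=[-1]\circ\psi$, and together with the idempotent decomposition $e^{\pm}=\tfrac12(1\pm\sigma)$ on the odd-order torsion this cleanly identifies the two eigenspaces with $E(K)[n]$ and $E^{d}(K)[n]$. The bookkeeping you flag (the identity $\sigma(\psi P)=\psi^{\sigma}(\sigma P)$ and the sign) is handled correctly, and the use of oddness of $n$ is exactly right.

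As for comparison with the paper: there is nothing to compare. The paper does not prove this lemma at all; it is simply quoted as Corollary~4 of Gonz\'alez-Jim\'enez and Tornero and used as a black box (in the proof of Lemma~\ref{r27}). So you have supplied a complete, self-contained proof where the authors chose to cite one. Your approach is the standard one and is essentially what one finds in the cited reference.
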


\bigskip

Now, we state three important lemmas which give information regarding the cardinality of torsion subgroups of elliptic curves defined over finite fields. 

%\begin{lemma} [\cite{kna}, Lemma 5.12, p. 149]
%\label{knapp}
%  Let $E: y^2 = x^3 + c$ be an elliptic curve for some nonzero integer $c$.  Let  $p\equiv 2\pmod{3}$ be a prime for which $E$ has good reduction mod $p$. Then, we have 
%$$
%|\overline{E}(\mathbb F_{p})| = p + 1,
%$$
%where $\overline{E}$ is the elliptic curve obtained by reducing $E$ modulo $p$.
%\end{lemma}
\bigskip

\begin{lemma} [\cite{dey2}, Corollary 1]
\label{dey1} 
Let $E: y^2 = x^3 + c$ be an elliptic curve for some non-zero integer $c$.  Let  $p\equiv 2\pmod{3}$ be a prime for which $E$ has good reduction mod $p$. Then, we have 
$$
|\overline{E}(\mathbb F_{p^n})| =
\left\{\begin{array}{cl} 
 p^n + 1, & \mbox{ if } n  \mbox{ is odd, } \\
 (p^{\frac{n}{2}} + 1)^2, & \mbox{ if } n\equiv 2\pmod{4}.
\end{array}\right.$$
\end{lemma}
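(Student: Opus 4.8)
The plan is to compute the point count through the Frobenius eigenvalues, after first disposing of the odd case by an elementary bijectivity argument. Since $E$ has good reduction at $p$, the hypothesis forces $p \neq 2,3$ and $p \nmid c$, so $\overline{E}: y^2 = x^3 + c$ is a smooth cubic over $\mathbb{F}_p$ and an elliptic curve. First I would observe that for $p \equiv 2 \pmod 3$ and $n$ odd one has $p \equiv -1 \pmod 3$, hence $p^n \equiv -1 \pmod 3$, so $3 \nmid p^n - 1 = |\mathbb{F}_{p^n}^*|$ and $\gcd(3, |\mathbb{F}_{p^n}^*|) = 1$. Consequently the cubing map $t \mapsto t^3$ is a bijection on $\mathbb{F}_{p^n}$. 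Therefore, for each $y \in \mathbb{F}_{p^n}$ the equation $x^3 = y^2 - c$ has exactly one solution $x$, giving exactly $p^n$ affine points; adding the point at infinity yields $|\overline{E}(\mathbb{F}_{p^n})| = p^n + 1$ for every odd $n$.

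Next I would extract the shape of the Frobenius. Taking $n = 1$ in the previous step gives $|\overline{E}(\mathbb{F}_p)| = p + 1$, so the trace of the $p$-power Frobenius $\pi$ is $a_p = p + 1 - |\overline{E}(\mathbb{F}_p)| = 0$; thus $\overline{E}$ is supersingular. The eigenvalues $\alpha, \beta$ of $\pi$ are the roots of $T^2 - a_p T + p = T^2 + p$, so $\alpha = \sqrt{-p}$ and $\beta = -\sqrt{-p} = -\alpha$, consistent with the Hasse bound $|\alpha| = |\beta| = \sqrt{p}$. The standard point-count formula then reads $|\overline{E}(\mathbb{F}_{p^n})| = p^n + 1 - (\alpha^n + \beta^n)$.

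Finally I would evaluate $\alpha^n + \beta^n$. Because $\beta = -\alpha$, we have $\alpha^n + \beta^n = \alpha^n\bigl(1 + (-1)^n\bigr)$, which vanishes for odd $n$ (recovering the first step) and equals $2\alpha^{2m} = 2(\alpha^2)^m = 2(-p)^m$ for even $n = 2m$. When $n \equiv 2 \pmod 4$ the integer $m = n/2$ is odd, so $(-p)^m = -p^{n/2}$ and hence $\alpha^n + \beta^n = -2p^{n/2}$; substituting gives $|\overline{E}(\mathbb{F}_{p^n})| = p^n + 1 + 2p^{n/2} = (p^{n/2} + 1)^2$, as claimed.

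The only genuine subtlety, and the step I expect to require the most care, is the passage from the single value $a_p = 0$ to the point count over all extensions. This rests on the Weil/Frobenius formalism — the characteristic polynomial of Frobenius acting on the Tate module and the resulting relation $|\overline{E}(\mathbb{F}_{p^n})| = p^n + 1 - \alpha^n - \beta^n$ — and one must be sure that supersingularity pins down the eigenvalues exactly as $\pm\sqrt{-p}$ rather than as some other conjugate pair with trace zero and norm $p$. The elementary odd-$n$ count provides an independent cross-check fixing the conventions, after which the case $n \equiv 2 \pmod 4$ reduces to the direct computation above.
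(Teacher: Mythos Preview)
The paper does not prove this lemma; it is quoted as Corollary~1 of \cite{dey2} and used as a black box. Your argument is correct and complete: the cubing-bijection count for odd $n$ is exactly the method the paper itself employs to establish the neighbouring Lemma~\ref{dey2} (the $n=3$ case with $c\in\mathbb{F}_{p^3}$), and your treatment of the case $n\equiv 2\pmod 4$ via the Frobenius eigenvalues $\pm\sqrt{-p}$ is the standard route and is carried out cleanly. There is nothing to compare against in the paper beyond this, so your write-up could serve as a self-contained replacement for the citation.
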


\bigskip

\begin{lemma}\label{dey2}
Let $E_p:y^2=x^3+c$ be an elliptic curve defined over $\mathbb{F}_{p^3}$ and assume that $p \equiv 2 \pmod 3$ be an odd prime. Then we have 
$$
|{E_p}(\mathbb F_{p^3})| = 
 p^3 + 1.
$$
\end{lemma}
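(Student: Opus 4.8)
The plan is to exploit the single structural fact that, when $p \equiv 2 \pmod 3$, the cubing map is a bijection of $\mathbb{F}_{p^3}$ onto itself. First I would note that $p^3 \equiv 2^3 \equiv 2 \pmod 3$, so $3 \nmid p^3 - 1 = |\mathbb{F}_{p^3}^{\times}|$; hence $x \mapsto x^3$ is an automorphism of the cyclic group $\mathbb{F}_{p^3}^{\times}$, and since it also fixes $0$ it is a bijection of $\mathbb{F}_{p^3}$. This is the only input needed, and it is worth emphasizing that $E_p$ is \emph{not} assumed to be the base change of a curve over $\mathbb{F}_p$ (i.e. $c$ need not lie in $\mathbb{F}_p$), so Lemma~\ref{dey1} with $n=3$ does not apply directly; the bijectivity of cubing on $\mathbb{F}_{p^3}$ is exactly what replaces it.

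Next I would count the affine points directly. The cleanest way is to observe that for every $y \in \mathbb{F}_{p^3}$ the equation $x^3 = y^2 - c$ has a \emph{unique} solution $x \in \mathbb{F}_{p^3}$, by bijectivity of cubing; hence the map $(x,y) \mapsto y$ is a bijection from the set of affine points onto $\mathbb{F}_{p^3}$, giving exactly $p^3$ affine points, and adjoining the point at infinity yields $|E_p(\mathbb{F}_{p^3})| = p^3 + 1$. Alternatively one can phrase it with character sums: writing $q = p^3$ and letting $\chi$ be the quadratic character on $\mathbb{F}_q$ with $\chi(0)=0$, one has
$$
\#\{(x,y): y^2 = x^3 + c\} = \sum_{x \in \mathbb{F}_q}\bigl(1 + \chi(x^3 + c)\bigr) = q + \sum_{u \in \mathbb{F}_q}\chi(u + c) = q + \sum_{w \in \mathbb{F}_q}\chi(w) = q,
$$
where the substitution $u = x^3$ is legitimate precisely because cubing permutes $\mathbb{F}_q$, and the final sum vanishes since $q$ is odd. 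Either route gives the claim.

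There is essentially no obstacle here: the statement is a one-line consequence of $2^3 \equiv 2 \pmod 3$ together with the elementary count above. The only point that requires a moment's care is not to appeal to Lemma~\ref{dey1} directly, since $c$ may lie strictly in $\mathbb{F}_{p^3}$; the argument given is the correct substitute and works verbatim for every $c \in \mathbb{F}_{p^3}^{\times}$ (the hypotheses that $p$ is odd and $p\equiv 2\pmod 3$ already guarantee $p \notin\{2,3\}$, so the curve is indeed smooth).
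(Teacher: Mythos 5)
Your proof is correct and follows essentially the same route as the paper: both arguments rest on the fact that $3 \nmid p^3 - 1$ makes cubing a bijection of $\mathbb{F}_{p^3}$, so each $y$ determines a unique $x$ with $x^3 = y^2 - c$, giving $p^3$ affine points plus the point at infinity. The character-sum variant and the remark about why Lemma~\ref{dey1} does not apply directly are sensible additions but do not change the substance.
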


\begin{proof}
The multiplicative group $(\mathbb{F}_{p^3})^{\times}$ has order $p^3 - 1$. Since $p \equiv 2 \pmod 3$, we observe that there is no element of order $3$ in $(\mathbb{F}_{p^3})^{\times}$. Therefore, the homomorphism $a \rightarrow a^3$ is a bijection on $(\mathbb{F}_{p^3})^{\times}$. In particular, for each $y \in \mathbb{F}_{p^3}$, the element $y^2 - c$ has a unique cubic root, which we can consider as $x$. Thus, in this process we obtain $p^3$ number of points on ${E_p}(\mathbb F_{p^3})$. With the  additional point at infinity, we see that ${E_p}(\mathbb F_{p^3})$ has $p^3 + 1$ points.
\end{proof}

\bigskip
%Now for given prime $p$, we see the existence of an elliptic curve with some specified relevant properties in the following proposition.

\begin{proposition} [\cite{was}, Theorem 4.3, p. 98]\label{criterion}
Let $q = p^n$ be a power of a prime $p$ and let $N = q + 1 -a$. Then, there is an elliptic curve $E$ defined over $\mathbb{F}_q$ such that $|E(\mathbb{F}_q)| = N$ iff $|a| \leq 2 \sqrt{q}$ and $a$ satisfies one of the following.
\begin{enumerate}
\item[(1)] $\gcd(a, p) = 1$,
\item[(2)]$n$ is even and $a = \pm2 \sqrt{q}$,
\item[(3)] $n$ is even, $p \not\equiv 1 \pmod 3$ and $a = \pm \sqrt{q}$,
\item[(4)] $n$ is odd, $p = 2$ or $3$ and $a = \pm p^{(n+1)/2}$,
\item[(5)] $n$ is even, $p \not\equiv 1 \pmod 4$ and $a = 0$,
\item[(6)] $n$ is odd and $a = 0$.
\end{enumerate}
\end{proposition}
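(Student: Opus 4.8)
The plan is to build on Theorem~\ref{sexticQ}, which already records the groups produced by rational $c$, and to show that enlarging the coefficient field from $\mathbb Q$ to $K$ adjoins to $\Phi^M_{\mathbb Q}(6)$ precisely the three sporadic groups $\mathbb Z/7\mathbb Z$, $\mathbb Z/19\mathbb Z$ and $\mathbb Z/14\mathbb Z\oplus\mathbb Z/2\mathbb Z$. The inclusion $\Phi^M_{\mathbb Q}(6)\subseteq\Phi^M(6)$ is automatic, a rational Mordell curve being the case $c\in\mathbb Q\subset K$. Throughout I use that every Mordell curve has $j=0$, hence complex multiplication by $\mathbb Z[\omega]$ with $\omega=e^{2\pi i/3}$, so that over $F:=K(\omega)$ the group $\mathrm{Gal}(\overline K/F)$ acts on $E[\ell]$ through a Cartan subgroup: for $\ell\equiv1\pmod3$ it preserves the two eigenlines $L_1,L_2$ with $E[\ell]=L_1\oplus L_2$ (each $\cong\mathbb F_\ell$), while for $\ell\equiv2\pmod3$ it acts through $\mathbb F_{\ell^2}^\times$. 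I separate the cases $\omega\in K$ and $\omega\notin K$, exactly as in Theorem~\ref{sexticQ}.

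The core of the argument is to bound the primes $\ell$ dividing $|E(K)_{\mathrm{tors}}|$. If $\ell\equiv2\pmod3$ the Cartan acts freely on nonzero points, so a single point of order $\ell$ over $K$ already forces $E[\ell]\subseteq E(F)$; then $\mathbb Q(\omega,\zeta_\ell)\subseteq F$ and $\ell-1\le[F:\mathbb Q(\omega)]\le6$, which together with $\ell\equiv2\pmod3$ leaves only $\ell=2$ (the case $\ell=5$ being excluded since $[\mathbb Q(\omega,\zeta_5):\mathbb Q(\omega)]=4$ divides neither $3$ nor $6$). Suppose next $\ell\equiv1\pmod3$. When $\omega\notin K$, complex conjugation $\tau$ swaps $L_1$ and $L_2$, so a $K$-rational point of order $\ell$ cannot lie on one eigenline; both of its eigencomponents are then nonzero and lie in $E(F)$, again forcing $E[\ell]\subseteq E(F)$ and $\mathbb Q(\omega,\zeta_\ell)\subseteq F$ with $[F:\mathbb Q]=12$, whence $\ell-1\le6$ and $\ell=7$. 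When $\omega\in K$ a single eigenline point on $L_1$ suffices; its field of definition over $\mathbb Q(\omega)$ has degree equal to the order of the character $\chi$ giving the action on $L_1$, and this must divide $[K:\mathbb Q(\omega)]=3$. A sextic twist alters $\chi$ only by a character valued in the subgroup generated by $-\omega\bmod\ell$, which has order $6$ in $\mathbb F_\ell^\times$; hence, once the base character of $y^2=x^3+1$ is surjective modulo the prime above $\ell$, the minimal attainable order of $\chi$ is $(\ell-1)/6$. The requirement $(\ell-1)/6\mid3$ then isolates exactly $\ell\in\{7,19\}$ (giving order $1$ and $3$ respectively), while $13,31,37,\dots$ are eliminated because $(\ell-1)/6\in\{2,5,6,\dots\}$ fails to divide $3$.

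With the primes pinned to $\{2,3,7,19\}$, I would next bound the exponents and assemble the groups. Reduction at a rational prime $p\equiv2\pmod3$ of good reduction and \emph{odd} residue degree $f\in\{1,3\}$ is the key tool: by the argument of Lemma~\ref{dey2} (cubing is a bijection on $\mathbb F_{p^f}^\times$ when $3\nmid p^f-1$, for any coefficient in the residue field) one has $|E(\mathbb F_{p^f})|=p^f+1$ regardless of the now possibly irrational reduction of $c$, and the prime-to-$p$ torsion injects here. Choosing $p$ suitably rules out $\mathbb Z/4\mathbb Z$ and $\mathbb Z/49\mathbb Z$ and caps the $3$-part at $\mathbb Z/9\mathbb Z$, with Proposition~\ref{criterion} confirming which residue-field orders actually occur; for even $f$, where the count depends on $\bar c$, and for higher prime powers such as $19^2$ that are unreachable by odd-degree reduction once $\omega\in K$, I fall back on the division-field degree bounds above. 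The group structure then follows from three facts: full $m$-torsion forces $\mu_m\subseteq K$ and, for $j=0$ curves, $\omega\in K$, so $7$- and $19$-torsion occur only cyclically, and a case analysis of the twisting character shows they cannot occur on a single curve (one twist cannot make both eigenline characters small at once); when $\omega\in K$ a single rational $2$-torsion point $(\sqrt[3]{-c},0)$ drags in $\omega\sqrt[3]{-c}$ and $\omega^2\sqrt[3]{-c}$, producing full $2$-torsion, so a $7$-torsion point together with any $2$-torsion yields exactly $\mathbb Z/14\mathbb Z\oplus\mathbb Z/2\mathbb Z$ and never a bare $\mathbb Z/14\mathbb Z$; and when $\omega\notin K$ the degree count forces $F=\mathbb Q(E[7])$, leaving no room for extra $2$- or $3$-torsion and giving the cyclic $\mathbb Z/7\mathbb Z$. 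Matching the surviving combinations of the $2$-, $3$-, $7$- and $19$-parts against the list of Theorem~\ref{sexticQ} shows the only additions are the three asserted groups; Lemma~\ref{twist} is convenient here for tracking how odd-order torsion over $F$ splits between the twists $y^2=x^3+c$ and $y^2=x^3-27c$.

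For the realizations I would produce explicit witnesses. For $\ell=7$ the minimal image computed above is trivial, so a suitable sextic twist places a $7$-torsion point already over $\mathbb Q(\omega)$; base-changing to a sextic $K\supseteq\mathbb Q(\omega)$ with no further growth realizes $\mathbb Z/7\mathbb Z$, and adjoining $\sqrt[3]{-c}$ to such a $K$ realizes $\mathbb Z/14\mathbb Z\oplus\mathbb Z/2\mathbb Z$. For $\ell=19$ the minimal image is $3$, so the relevant eigenline point is genuinely defined over a degree-$3$ extension of $\mathbb Q(\omega)$; factoring the $19$-division polynomial of $y^2=x^3+1$ over $\mathbb Q(\omega)$, isolating the cubic factor cutting out $L_1/\{\pm1\}$, and choosing the twist $c$ so that the matching $y$-coordinate is rational there, produces a Mordell curve over an explicit sextic $K\ni\omega$ with $\mathbb Z/19\mathbb Z$. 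The hard part will be the split case with $\omega\in K$: proving that the minimal order of $\chi$ over all sextic twists is exactly $(\ell-1)/6$ — that is, that the CM Hecke character of $y^2=x^3+1$ is surjective modulo the prime above $\ell$ and that only the $\mu_6$-part it shares with the twisting automorphism can be cancelled — since this is precisely the statement that distinguishes the admissible primes $7$ and $19$ from $13,31,37,\dots$ that the cruder cyclotomic degree bound would otherwise allow.
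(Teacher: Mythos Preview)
Your proposal does not address the stated result at all. Proposition~\ref{criterion} is the Waterhouse-type classification of possible orders $|E(\mathbb F_q)|$ for elliptic curves over finite fields; in the paper it is quoted from Washington's textbook and carries no proof. What you have written is instead a proof sketch for Theorem~\ref{sexticK}, the determination of $\Phi^M(6)$. There is nothing in your text about the Hasse bound, Honda--Tate theory, or the isogeny classes over $\mathbb F_q$ that would be needed to establish the proposition as stated.

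If your intent was to prove Theorem~\ref{sexticK}, then your approach is genuinely different from the paper's and worth comparing. The paper starts from the external classification of Clark--Corn--Rice--Stankewicz for all CM curves over sextic fields, then eliminates the groups $\mathbb Z/4\mathbb Z$, $\mathbb Z/2\mathbb Z\oplus\mathbb Z/4\mathbb Z$, $\mathbb Z/10\mathbb Z$, $\mathbb Z/26\mathbb Z$, $\mathbb Z/14\mathbb Z$, $\mathbb Z/18\mathbb Z$, $\mathbb Z/3\mathbb Z\oplus\mathbb Z/6\mathbb Z$ by elementary division-polynomial and reduction arguments, and finally exhibits explicit curves realizing $\mathbb Z/7\mathbb Z$, $\mathbb Z/19\mathbb Z$, $\mathbb Z/2\mathbb Z\oplus\mathbb Z/14\mathbb Z$. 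You instead attempt a self-contained analysis via the Cartan structure of the mod-$\ell$ Galois representation for $j=0$ curves, bounding admissible primes by the order of the twisting character. Your route is more conceptual and avoids reliance on the CM tables, but several steps are only sketched: the surjectivity of the Hecke character of $y^2=x^3+1$ modulo primes above $\ell$ (needed to exclude $\ell=13$), the claim that $7$- and $19$-torsion cannot coexist, and the exclusion of $\mathbb Z/14\mathbb Z$ when $\omega\notin K$ all require more than you have written. The paper's approach trades conceptual depth for brevity by outsourcing the hard upper bound to \cite{ccrs}.
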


\bigskip

Following two results provide criteria to determinine the supersingularity of elliptic curves. 

\begin{proposition} [\cite{sil}, Theorem 4.1, p. 148]\label{super1}
Let $\mathbb{F}_q$ be a finite field of characteristic $p \geq 3$. Let $E$ be an elliptic curve over $\mathbb{F}_q$ given by a Weierstrass equation
$$
E: y^2 = f(x),
$$
where $f(x) \in \mathbb{F}_q[x]$ is a cubic polynomial with distinct roots in $\overline{\mathbb{F}}_q$. Then $E$ is supersingular if and only if the coefficient of $x^{p-1}$ in ${f(x)}^{(p-1)/2}$ is zero.
\end{proposition}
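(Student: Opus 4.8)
The plan is to recognise the quantity in the statement as the \emph{Hasse invariant} of $E$ with respect to the invariant differential $\omega=dx/y$, and then to invoke the standard description of supersingularity through the Cartier operator. Since $\mathrm{char}\,\mathbb{F}_q=p\ge 3$ and $f$ has distinct roots, $E$ is a smooth curve of genus one; its space $H^0(E,\Omega^1_E)$ of regular differentials is one-dimensional, spanned by $\omega=dx/y$, and the Cartier operator $\mathcal{C}$ maps this space to itself. The key input I would quote is
\[
E\text{ is supersingular}\iff \mathcal{C}=0\text{ on }H^0(E,\Omega^1_E).
\]
This is justified by recalling that $\mathcal{C}$ on $H^0(\Omega^1_E)$ is the map induced by the Verschiebung $V\colon E^{(p)}\to E$ --- equivalently, the Serre-dual of the $p$-power Frobenius acting on $H^1(E,\mathcal{O}_E)$, whose matrix is the Hasse--Witt matrix --- and that the $p$-rank of $E$ is zero, i.e.\ $E$ is supersingular, precisely when this $1\times 1$ matrix vanishes.

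It then remains to compute $\mathcal{C}(\omega)$ explicitly. Here I would use the basic properties of the Cartier operator: additivity, $\mathcal{C}(g^p\eta)=g\,\mathcal{C}(\eta)$, $\mathcal{C}(a\eta)=a^{1/p}\mathcal{C}(\eta)$ for $a\in\mathbb{F}_q$ (legitimate because $\mathbb{F}_q$ is perfect), $\mathcal{C}(x^i\,dx)=0$ whenever $i\not\equiv-1\pmod p$, and $\mathcal{C}(x^{p-1}\,dx)=dx$. As $p-1$ is even, $y^{p-1}=f(x)^{(p-1)/2}$, so writing $f(x)^{(p-1)/2}=\sum_k A_k x^k$ --- in which $A_{p-1}$ is precisely the coefficient named in the statement --- one gets
\[
y^p\,\omega\;=\;y^{p-1}\,dx\;=\;\sum_{k=0}^{3(p-1)/2} A_k\,x^k\,dx .
\]
Applying $\mathcal{C}$ turns the left-hand side into $y\,\mathcal{C}(\omega)$, while on the right a monomial $x^k$ survives only when $k\equiv-1\pmod p$; since $0\le k\le 3(p-1)/2<2p-1$, the only such exponent is $k=p-1$. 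Hence $y\,\mathcal{C}(\omega)=A_{p-1}^{1/p}\,dx$, i.e.\ $\mathcal{C}(\omega)=A_{p-1}^{1/p}\,\omega$. So $\mathcal{C}$ acts on the line $H^0(E,\Omega^1_E)$ by the semilinear scalar $A_{p-1}^{1/p}$, which is the zero map if and only if $A_{p-1}=0$; combined with the equivalence above, this proves the proposition.

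I expect the main obstacle, for a fully self-contained proof, to be the cohomological equivalence used in the first paragraph: deriving it from scratch draws on the structure theory of the Verschiebung (or of the $p$-divisible group of $E$), so in practice one cites it. A more elementary alternative, which also meshes with the point-count arguments used elsewhere in this paper, treats at least the prime field $\mathbb{F}_p$: write $|E(\mathbb{F}_p)|=p+1+\sum_{x\in\mathbb{F}_p}\bigl(\tfrac{f(x)}{p}\bigr)$, so that the trace of Frobenius satisfies $a_p=-\sum_{x\in\mathbb{F}_p}\bigl(\tfrac{f(x)}{p}\bigr)$; then apply Euler's criterion $\bigl(\tfrac{f(x)}{p}\bigr)\equiv f(x)^{(p-1)/2}\pmod p$ together with the power-sum congruence $\sum_{x\in\mathbb{F}_p}x^k\equiv-1$ or $0\pmod p$ according as $(p-1)\mid k$ with $k>0$, or not, noting that the same degree bound isolates the term $k=p-1$; this gives $a_p\equiv A_{p-1}\pmod p$, and since $E/\mathbb{F}_p$ is supersingular if and only if $p\mid a_p$, supersingularity over $\mathbb{F}_p$ is equivalent to $A_{p-1}=0$. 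The passage to an arbitrary $\mathbb{F}_q$ is then cleanest through the displayed identity $\mathcal{C}(\omega)=A_{p-1}^{1/p}\,\omega$, which holds verbatim over any perfect field.
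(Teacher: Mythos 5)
The paper does not prove this proposition at all: it is quoted verbatim from Silverman (\emph{AEC}, Thm.\ V.4.1) and used as a black box, so there is no internal proof to compare against. Your argument is correct, and it is worth noting that your two routes differ from each other and from Silverman's. Silverman's proof is essentially your ``elementary alternative,'' but carried out over $\mathbb{F}_q$ rather than $\mathbb{F}_p$: he writes $\#E(\mathbb{F}_q)=1+\sum_{x}(1+\chi(f(x)))$ with $\chi$ the quadratic character, applies Euler's criterion and the power-sum congruence to get $a_q\equiv A_{q-1}\pmod p$ where $A_{q-1}$ is the coefficient of $x^{q-1}$ in $f^{(q-1)/2}$, and then needs an extra lemma (the relation $A_{q-1}=A_{p-1}^{\,1+p+\cdots+p^{r-1}}$) to descend from the exponent $q-1$ to $p-1$. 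Your primary route via the Cartier operator avoids that descent entirely, since the computation $\mathcal{C}(\omega)=A_{p-1}^{1/p}\,\omega$ sees only the residue characteristic; the price is the input ``supersingular $\iff$ $\mathcal{C}=0$ on $H^0(\Omega^1)$,'' which is of comparable depth to the statement being proved and in practice must itself be cited (it is the identification of the Hasse--Witt matrix with the action of Verschiebung, plus $p$-rank $0\iff$ supersingular for elliptic curves). Both the degree bound $3(p-1)/2<2p-1$ isolating the exponent $k=p-1$ and the semilinearity bookkeeping in your computation are handled correctly, so the argument is sound; the only caveat is that, as you acknowledge, the character-sum version alone proves the result only for curves defined over the prime field, and either the Cartier computation or Silverman's norm lemma is genuinely needed for general $\mathbb{F}_q$.
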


\bigskip

\begin{lemma} [\cite{was}, Proposition 4.31, p. 130]\label{super2}
Let $E$ be an elliptic curve over $\mathbb{F}_q$, where $q$ is a power of the prime $p$. Let $a = q + 1 - |E(\mathbb{F}_q)|$. Then $E$ is supersingular if and only if $a \equiv 0 \pmod p$.
\end{lemma}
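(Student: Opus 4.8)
The plan is to prove the lemma through the theory of the Frobenius endomorphism, reducing the assertion to a computation on the one-dimensional space of invariant differentials. Write $\phi \in \mathrm{End}(E)$ for the $q$-power Frobenius endomorphism; it is purely inseparable of degree $q$, and its dual isogeny $\hat\phi$ satisfies $\phi + \hat\phi = [a]$ and $\phi\hat\phi = \hat\phi\phi = [q]$ in $\mathrm{End}(E)$. First I would recall why $a$ is the trace: since $\phi - 1$ is separable, $|E(\mathbb{F}_q)| = |\ker(\phi-1)| = \deg(\phi-1) = (\phi-1)(\hat\phi-1) = q - (\phi+\hat\phi) + 1 = q + 1 - a$, matching the quantity $a = q+1-|E(\mathbb{F}_q)|$ in the statement.

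Next I would reduce supersingularity to the separability of $\hat\phi$. Using that the separable degree $\deg_s$ is multiplicative and that $\deg_s(\phi) = 1$, I get $\deg_s(\hat\phi) = \deg_s(\phi\hat\phi) = \deg_s([q]) = \deg_s([p^n]) = |E[p^n](\overline{\mathbb{F}}_q)|$. By the structure of the $p^n$-torsion this cardinality is $p^n = q$ when $E$ is ordinary and $1$ when $E$ is supersingular. Since $\deg(\hat\phi) = \deg(\phi) = q$, this shows that $E$ is supersingular precisely when $\hat\phi$ is purely inseparable, and ordinary precisely when $\hat\phi$ is separable.

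It then remains to detect the separability of $\hat\phi$ numerically. I would fix a nonzero invariant differential $\omega$ on $E$ and use that an isogeny is separable if and only if it acts nontrivially on the space of invariant differentials. Frobenius kills $\omega$, so $\phi^*\omega = 0$, while $[a]^*\omega = a\,\omega$; hence
$$\hat\phi^*\omega = ([a] - \phi)^*\omega = a\,\omega = (a \bmod p)\,\omega.$$
Therefore $\hat\phi$ is separable exactly when $a \not\equiv 0 \pmod p$. Combining this with the previous paragraph yields the claim: $E$ is supersingular if and only if $\hat\phi$ is inseparable, if and only if $a \equiv 0 \pmod p$.

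The main obstacle is the middle step: one cannot read off supersingularity by letting $\phi$ act on the tangent space directly, since there both $\phi$ and $[q]$ vanish and the relation $\phi^2 - a\phi + q = 0$ degenerates to $0 = 0$, carrying no information. The content lies in passing to the dual $\hat\phi$ and in the identification $\deg_s(\hat\phi) = |E[p^n](\overline{\mathbb{F}}_q)|$, which requires careful bookkeeping of separable versus inseparable degrees and an appeal to the structure theorem for $p$-power torsion; once this is in place, the differential computation is routine.
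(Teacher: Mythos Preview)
The paper does not prove this lemma at all: it is stated with a citation to Washington's textbook and used as a black box, so there is no ``paper's own proof'' to compare against. Your argument is a correct and standard proof of the result (essentially the one in Silverman's \emph{The Arithmetic of Elliptic Curves}, Chapter~V): the reduction to the separability of $\hat\phi$ via $\deg_s(\hat\phi)=\deg_s([q])=|E[q](\overline{\mathbb{F}}_q)|$, followed by the computation $\hat\phi^*\omega=([a]-\phi)^*\omega=a\,\omega$ on invariant differentials, is exactly the right mechanism. One small remark: the additivity $(\psi_1+\psi_2)^*\omega=\psi_1^*\omega+\psi_2^*\omega$ on the one-dimensional space of invariant differentials is itself a nontrivial fact (it amounts to the linearity of the induced map on the cotangent space at the origin), so in a fully self-contained write-up you would want to cite or justify it; but this is a routine reference rather than a gap.
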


\bigskip

In the following proposition, we record the nature of the reduction map for elliptic curves over a given number field.

\begin{proposition} [\cite{dey1}, Proposition 4]   \label{reduction}
Let $E$ be an elliptic curve defined over $K$ and  $T$ be the torsion subgroup of $E(K)$. Let $\mathcal{O}_K$ be the ring of integers in $K$ and let $\mathcal{P}$ be a prime ideal lying in $\mathcal{O}_K$. If $E$ has good reduction at $\mathcal{P}$, let $\phi$ be the reduction modulo $\mathcal{P}$ map on $T$ which means the reduction map
$\phi: T \longrightarrow \bar{E}(\mathcal{O}_{K}/\mathcal{P})$ is defined as $P = (x,y) \mapsto \bar {P} = (\bar{x},\bar{y})$ if $P \neq \mathcal{O}$ and $\mathcal{O} \mapsto \bar{\mathcal O}$. 
Then, the reduction map $\phi$ is an injective homomorphism except finely many prime ideal $\mathcal{P}$.
\end{proposition}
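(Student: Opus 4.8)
The plan is to deduce this from the standard theory of reduction of elliptic curves (Silverman, \emph{The Arithmetic of Elliptic Curves}, Chapters IV and VII); since $T$ is finite, only two classical inputs are needed. Write $k_{\mathcal P}=\mathcal O_K/\mathcal P$ and let $p$ be its residue characteristic. First I would check that $\phi$ is a group homomorphism at every prime of good reduction: fixing a Weierstrass model of $E$ minimal at $\mathcal P$, the group law on $E$ is given by polynomials with $\mathcal P$-integral coefficients on the smooth model, and these reduce modulo $\mathcal P$ to the defining polynomials of the group law on $\bar E/k_{\mathcal P}$, which is still smooth because the reduction is good. Hence the coordinatewise reduction map $E(K)\to\bar E(k_{\mathcal P})$ is a homomorphism, and so is its restriction $\phi$ to $T$.

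Next I would control the kernel of reduction. The kernel of $E(K)\to\bar E(k_{\mathcal P})$ consists of the points reducing to $\bar{\mathcal O}$, and it is identified with $\hat E(\mathfrak m_{\mathcal P})$, where $\hat E$ is the formal group of $E$ at $\mathcal P$ and $\mathfrak m_{\mathcal P}$ the maximal ideal of the completion of $\mathcal O_{K,\mathcal P}$. Filtering $\hat E(\mathfrak m_{\mathcal P})$ by the subgroups $\hat E(\mathfrak m_{\mathcal P}^{\,n})$, whose successive quotients are isomorphic to the additive group $\mathfrak m_{\mathcal P}^{\,n}/\mathfrak m_{\mathcal P}^{\,n+1}$, a finite $\mathbb F_p$-vector space, shows that $\hat E(\mathfrak m_{\mathcal P})$ is a pro-$p$ group; in particular it has no nontrivial torsion of order prime to $p$. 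Equivalently, for every integer $m$ prime to $p$, reduction is injective on $E(K)[m]$.

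To conclude, I would let $S$ be the union of the finite set of primes of $\mathcal O_K$ at which $E$ has bad reduction with the finite set of primes whose residue characteristic divides $|T|$; then $S$ is finite. For $\mathcal P\notin S$ the curve has good reduction, so $\phi$ is a homomorphism by the first step, and taking $m=|T|$ we have $T\subseteq E(K)[m]$ with $\gcd(m,p)=1$, so $\phi$ is injective on $T$ by the second step. I do not expect any real obstacle here: both ingredients are textbook. The only point needing a little care is the identification of the kernel of reduction with $\hat E(\mathfrak m_{\mathcal P})$ and the $\mathbb F_p$-structure of its graded pieces; note this is insensitive to whether $\mathcal P$ ramifies over $p$, since ramification would affect only the (here irrelevant) bound on $p$-power torsion in the kernel.
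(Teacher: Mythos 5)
Your argument is correct: it is the standard proof via the formal group (reduction at a prime of good reduction is a homomorphism, its kernel is $\hat E(\mathfrak m_{\mathcal P})$ which has no prime-to-$p$ torsion, and the exceptional set is the finitely many primes of bad reduction together with those whose residue characteristic divides $|T|$). The paper itself gives no proof of this proposition --- it is imported verbatim from the cited reference --- so there is nothing to compare against; your write-up is a complete and standard justification of the quoted result, and in fact proves the sharper statement that $\phi$ is injective at \emph{every} prime of good reduction whose residue characteristic is prime to $|T|$.
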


\bigskip

%The following proposition gives information about torsion structure for rational Mordell curves.

%\begin{proposition} [\cite{kna}, Theorem 5.3, p. 134]
%\label{knap}
%Let $E: y^2 = x^3 + c$  be an elliptic curve for some integer $c$  which is $6$-th power-free.  
 %If $T$ is the torsion subgroup of $E(\mathbb{Q})$, then $T$ is isomorphic to one of the following groups.
%\begin{enumerate}
%\item[(1)]  $T\cong \mathbb{Z}/6\mathbb{Z},  
%\mbox{ if }  c = 1,
%$

%\item[(2)] $T\cong \mathbb{Z}/3\mathbb{Z}, 
%\mbox{ if }  c \neq 1 \mbox{ is a square, or if } c = -432,
%$
%\item [(3)] $T\cong \mathbb{Z}/2\mathbb{Z}, 
%\mbox{ if }  c \neq 1 \mbox{ is a cube}, 
%$
%\item[(4)]   $T \cong \{\mathcal{O}\}, \text{ otherwise.}$
%\end{enumerate}
%\end{proposition}

%\bigskip
Now we talk about the Kubert-Tate normal form of an elliptic curve, which we will use for proving Theorem \ref{sexticK}.
\begin{theorem}[\cite{kub}]
Let $E$ be an elliptic curve over a field $K$ and $P \in E(K)$ be a point of order at least $4$. Then $E$ can be written of the form
\begin{equation}\label{1}
y^2 + (1-c)xy - by = x^3 - bx^2
\end{equation}
for some $b,c \in K$ with $P=(0,0)$.
\end{theorem}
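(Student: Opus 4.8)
The plan is to start from a generalized Weierstrass model for $E$ and normalize it step by step using the admissible coordinate changes $(x,y)\mapsto(u^2x+r,\,u^3y+u^2sx+t)$ with $u\in K^*$ and $r,s,t\in K$, which are exactly the substitutions preserving the shape of a Weierstrass equation over an arbitrary field. Since $E/K$ is an elliptic curve I may write it as $y^2+a_1xy+a_3y=x^3+a_2x^2+a_4x+a_6$ with $a_i\in K$. First I would translate the given point $P$ to the origin by $(x,y)\mapsto(x+x_P,\,y+y_P)$; because $(0,0)$ then lies on the curve, the constant term vanishes, $a_6=0$, and henceforth $P=(0,0)$. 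All later transformations fix the origin, so $P=(0,0)$ is preserved throughout.

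Next I would exploit the hypothesis that $P$ has order at least $4$ through the two consequences $2P\neq\mathcal{O}$ and $3P\neq\mathcal{O}$. By the negation law on $y^2+a_1xy+a_3y=x^3+a_2x^2+a_4x$ one has $-P=(0,-a_3)$, so $P=-P$ (that is, $2P=\mathcal{O}$) would force $a_3=0$; since $2P\neq\mathcal{O}$ we obtain $a_3\neq0$. I can then apply $(x,y)\mapsto(x,\,y+sx)$, which leaves $a_3$ and the origin unchanged while sending $a_4\mapsto a_4+a_3s$; choosing $s=-a_4/a_3$ clears the linear term, $a_4=0$. The model is now $y^2+a_1xy+a_3y=x^3+a_2x^2$.

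The key step is to rule out $a_2=0$. I would compute $2P$ via the tangent at $(0,0)$: since the partial derivative in $x$ vanishes at the origin, the tangent line is $y=0$, and substituting into the curve gives $x^2(x+a_2)=0$, so the residual intersection point is $(-a_2,0)$ and hence $x(2P)=-a_2$. Now the points of the curve with $x=0$ are precisely $P$ and $-P$, so if $a_2=0$ then $x(2P)=0$ forces $2P\in\{P,-P\}$; the case $2P=P$ is impossible as $P\neq\mathcal{O}$, while $2P=-P$ means $3P=\mathcal{O}$, contradicting $3P\neq\mathcal{O}$. Therefore $a_2\neq0$. Finally I would scale by $u=a_3/a_2$ through $(x,y)\mapsto(u^2x,u^3y)$, under which $a_2\mapsto a_2/u^2$ and $a_3\mapsto a_3/u^3$; with this choice both become $a_2^3/a_3^2$, so the coefficient of $x^2$ equals the coefficient of $y$. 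Writing $-b$ for this common value and $1-c$ for the resulting coefficient $a_1/u$ gives exactly $y^2+(1-c)xy-by=x^3-bx^2$ with $P=(0,0)$, which is the asserted form.

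I expect the main obstacle to be the order-$3$ obstruction, namely establishing $a_2\neq0$: this is where one genuinely needs the duplication/tangent computation rather than just the negation formula, and it is the precise point at which the hypothesis ``order at least $4$'' (beyond merely avoiding order $2$) enters. The remaining bookkeeping---checking that each admissible change preserves both the Weierstrass shape and the origin, and that every step is valid in any characteristic since no inversion of $2$ or $3$ is required---is routine.
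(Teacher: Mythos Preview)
The paper does not actually prove this theorem: it is quoted from \cite{kub} as a known result and no argument is supplied, so there is no ``paper's own proof'' to compare against. Your argument, however, is correct and is precisely the standard derivation of the Kubert--Tate normal form. The logical thread---translate $P$ to the origin (killing $a_6$), use $2P\neq\mathcal{O}$ to get $a_3\neq 0$ and shear to kill $a_4$, use $3P\neq\mathcal{O}$ together with the tangent-line computation of $x(2P)=-a_2$ to get $a_2\neq 0$, then scale by $u=a_3/a_2$ to force $a_2=a_3$---is sound in every characteristic, and your identification of the order-$3$ obstruction as the genuine content of the hypothesis ``order $\geq 4$'' is exactly right.

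One small remark that sharpens the write-up: strictly, doubling gives $2P=-Q$ where $Q=(-a_2,0)$ is the residual intersection of the tangent $y=0$ with the curve, so $2P=(-a_2,\,a_1a_2-a_3)$; you only used the $x$-coordinate, which is indeed $-a_2$, so the argument is unaffected. Also, when you say $b,c\in K$, note that $b=-a_2^3/a_3^2\neq 0$ automatically (indeed, $b=0$ would make $(0,0)$ singular), which is sometimes included in the statement; your construction yields this for free.
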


\smallskip

The curve defined in \eqref{1} is called as the {\it Kubert-Tate normal form} of $E$, and we denote this curve simply by $E(b,c)$. The $j$-invariant of this elliptic curve is
\begin{equation}\label{kub}
j(b,c) = \frac{(16b^2+8b(1-c)(c+2)+(1-c)^4)^3}{b^3(16b^2-b(8c^2+20c-1)-c(1-c)^3)}.
\end{equation}

\bigskip

\section{proof of theorem \ref{cubicQ}}
Throughout this section, $K/\mathbb{Q}$ stands for a cubic number field.
We denote a rational Mordell curve of the form $y^2 = x^3 + c$ for some $6$-th power-free integer $c$, simply by $E$. We also denote $T$ as the torsion subgroup of $E(K)$.

\begin{lemma}\label{Qq}
For any prime $ q \geq 5$, there is no element in $T$ of order $q$.
\end{lemma}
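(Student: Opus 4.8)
The plan is to use the reduction-modulo-$\mathcal{P}$ machinery (Proposition~\ref{reduction}) together with the point counts from Lemmas~\ref{dey1} and~\ref{dey2} to force a contradiction from the existence of a point of order $q \geq 5$ in $T$. Suppose $P \in T$ has order $q$. First I would choose two distinct rational primes $p_1, p_2$, each congruent to $2 \pmod 3$, at which $E$ has good reduction (such primes exist because $c$ has only finitely many prime divisors and $E: y^2 = x^3 + c$ has bad reduction only at $2$, $3$, and the primes dividing $c$; since there are infinitely many primes $\equiv 2 \pmod 3$, at least two are available). For such a prime $p$, the prime $p$ either remains inert, splits, or ramifies in the cubic field $K$; in every case the residue field $\mathcal{O}_K/\mathcal{P}$ is either $\mathbb{F}_p$ or $\mathbb{F}_{p^3}$. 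By Lemma~\ref{dey2}, $|\overline{E}(\mathbb{F}_{p^3})| = p^3 + 1$, and by Lemma~\ref{dey1} (the $n=1$ case), $|\overline{E}(\mathbb{F}_p)| = p + 1$. Hence for all but finitely many such $\mathcal{P}$ the reduction map $\phi$ is an injective homomorphism, so $q$ divides $p+1$ or $p^3+1$; since $p^3 + 1 = (p+1)(p^2-p+1)$, in either case $q \mid p^3 + 1$, equivalently $p^3 \equiv -1 \pmod q$.

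Next I would extract the arithmetic constraint. From $p^3 \equiv -1 \pmod q$ we get $p^6 \equiv 1 \pmod q$, so the order of $p$ in $(\mathbb{Z}/q\mathbb{Z})^{\times}$ divides $6$ but does not divide $3$ (else $p^3 \equiv 1$, contradicting $p^3 \equiv -1$ since $q \geq 5 > 2$); thus the order of $p$ modulo $q$ is $2$ or $6$. I would run this argument for the two primes $p_1, p_2$ (and, if needed, more primes $\equiv 2 \pmod 3$, of which there are infinitely many). The key point is that the condition ``$p \equiv 2 \pmod 3$ and $p^3 \equiv -1 \pmod q$'' is a congruence condition on $p$ modulo $3q$, carving out a specific (nonempty but proper, once $q \geq 5$) subset of residue classes; by Dirichlet's theorem I can pick a prime $p \equiv 2 \pmod 3$ lying in a residue class mod $q$ for which $p^3 \not\equiv -1 \pmod q$ — for instance choosing $p$ with $p \equiv 1 \pmod q$, which is compatible with $p \equiv 2 \pmod 3$ by CRT since $\gcd(3,q)=1$ — and this $p$ gives a reduction at which $q \nmid |\overline{E}(\mathcal{O}_K/\mathcal{P})|$, contradicting the injectivity of $\phi$.

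I expect the main obstacle to be bookkeeping rather than conceptual: one must ensure the chosen prime $p$ simultaneously (i) is $\equiv 2 \pmod 3$, so Lemmas~\ref{dey1} and~\ref{dey2} apply and the residue-field point count is exactly $p+1$ or $p^3+1$; (ii) lies outside the finite set of primes of bad reduction for $E$ and outside the finitely many $\mathcal{P}$ where Proposition~\ref{reduction} fails; and (iii) satisfies a prescribed residue class modulo $q$ that kills divisibility by $q$. Since these are finitely many congruence/exclusion conditions with coprime moduli ($3$, $q$, and avoidance of finitely many primes), Dirichlet's theorem on primes in arithmetic progressions guarantees such a $p$ exists, completing the contradiction. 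A cleaner alternative, which I would mention, is simply to note that picking $p \equiv 2 \pmod 3$ with $p \equiv 1 \pmod q$ already forces $p^3 + 1 \equiv 2 \pmod q$, which is nonzero since $q \geq 5$ (in fact $q \geq 3$ suffices for this step, the hypothesis $q \geq 5$ being needed to separate this case from the orders $2,3,6$ handled elsewhere in the paper), so no point of order $q$ survives reduction; hence $T$ has no element of order $q$.
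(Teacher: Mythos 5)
Your proposal is correct and follows essentially the same route as the paper: reduce modulo a prime of $K$ of odd residual degree lying over a rational prime $p \equiv 2 \pmod{3}$ chosen by Dirichlet's theorem so that $q$ does not divide $p^{f}+1$, using Lemmas~\ref{dey1} and~\ref{dey2} for the point counts and Proposition~\ref{reduction} for injectivity of reduction; the only difference is that the paper takes $p \equiv 2 \pmod{3q}$, giving $p^{f}+1 \equiv 3$ or $9 \pmod{q}$, while you take $p \equiv 1 \pmod{q}$, giving $p^{f}+1 \equiv 2 \pmod{q}$. One small wording slip: it is not true that \emph{every} prime $\mathcal{P}$ above $p$ has residue field $\mathbb{F}_p$ or $\mathbb{F}_{p^3}$ (the splitting pattern $f_1=1,\ f_2=2$ does occur in a cubic field); what is true, and all your argument actually needs, is that at least one $\mathcal{P}$ above $p$ has odd residual degree, since $\sum_i e_i f_i = 3$ is odd.
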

\begin{proof}
Suppose there exists an element of order $q$ in $T$. Then $q$ divides $|T|$. Since $\gcd (2, 3q)=1$, by Dirichlet's theorem on primes in arithmetic progressions, there exist infinitely many primes $p \equiv 2 \pmod {3q}$. Therefore there is a prime $p\equiv 2 \pmod{ 3q}$ such that $E$ has good reduction at $p$. We consider such a prime and let $p \mathcal{O}_K = {\mathcal{P}}_{1}^{e_1} {\mathcal{P}}_{2}^{e_2}{\mathcal{P}}_{3}^{e_3}$ be the ideal decomposition in $\mathcal{O}_K$ where $\mathcal{P}_1,\mathcal{P}_2,\mathcal{P}_3$ are prime ideals in $\mathcal{O}_K$ lying above $p$ and $0 \leq e_i \leq 1$. For $ i=1,2, 3$, if $f_i$'s are the residual degree of $\mathcal{P}_i$,  then we know that $e_1f_1+e_2f_2 +e_3f_3 = 3$. Hence we have a prime ideal $\mathcal{P}_j$ with $f_j =1 $ or $3$ for some $ j=1,2 , 3$. 

\smallskip

Now, we consider the reduction mod $\mathcal{P}_j$ map. Since $p\equiv 2 \pmod 3$, by Lemma \ref{dey1}, we get $|\overline{E}(\mathcal{O}_K/\mathcal{P}_j)| = p^{f_j} + 1$. Hence by Proposition \ref{reduction},  $q$ divides $|T|$ and thus   $q \mid (p^{f_j} + 1)$. Since $p\equiv 2 \pmod 3$, we have $0 \equiv p^{f_j}+1 \equiv 2^{f_j}+1 \pmod q$. Since $f_j =1 $ or $3$, we see that $3 \equiv 0 \pmod q$ or $9 \equiv 0 \pmod q$, which is a contradiction to $q\geq 5$.
\end{proof}

\smallskip

\begin{lemma}
\label{Q2}
Let $E$ be a rational Mordell curve. Then,  
$$
E(K)[2] \cong 
\begin{cases}
\mathbb{Z} / 2 \mathbb{Z}, & \text{ if } c \mbox{ is a cube in } K,\\
\mathcal{O}, & \text{ otherwise}.
\end{cases}
$$
\end{lemma}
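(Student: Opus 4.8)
The nontrivial points of $E(K)[2]$ are exactly the affine points $(x,0)$ on $E$, i.e.\ the pairs $(x,0)$ with $x \in K$ a root of $f(X) = X^3 + c$. Since $c$ is a nonzero integer, $f$ has distinct roots in $\overline{\mathbb{Q}}$, namely $-\alpha,\,-\omega\alpha,\,-\omega^2\alpha$, where $\alpha$ is a fixed cube root of $c$ and $\omega$ is a primitive cube root of unity. The plan is to count how many of these roots lie in $K$, using the constraint that $[K:\mathbb{Q}] = 3$.

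First I would observe that $\omega \notin K$: indeed $[\mathbb{Q}(\omega):\mathbb{Q}] = 2$, which does not divide $[K:\mathbb{Q}] = 3$, so $\mathbb{Q}(\omega) \not\subseteq K$. Consequently $f$ cannot have two distinct roots in $K$, for if $-\omega^i\alpha$ and $-\omega^j\alpha$ both lay in $K$ with $i \neq j$, then their ratio $\omega^{i-j} \in K$ would give $\omega \in K$, a contradiction. Hence $f$ has either $0$ or exactly $1$ root in $K$, so $|E(K)[2]| \in \{1,2\}$ and thus $E(K)[2]$ is either trivial or isomorphic to $\mathbb{Z}/2\mathbb{Z}$; in particular the Klein four-group cannot occur.

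Next I would identify precisely when a root exists: $f$ has a root in $K$ if and only if $-c$ is a cube in $K$, and since $-1 = (-1)^3$ is trivially a cube, this is equivalent to $c$ being a cube in $K$. Combining this with the dichotomy above: if $c$ is a cube in $K$ there is exactly one affine $2$-torsion point, giving $E(K)[2] \cong \mathbb{Z}/2\mathbb{Z}$; otherwise there is none and $E(K)[2] = \{\mathcal{O}\}$. There is no real obstacle here — the only input beyond elementary manipulation is the degree argument showing $\omega \notin K$, which rules out the split and partially-split cases for $f$ over a cubic field.
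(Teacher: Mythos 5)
Your proposal is correct and follows essentially the same route as the paper: identify the nontrivial $2$-torsion points as $(x,0)$ with $x$ a root of $X^3+c$, and conclude that such a point exists exactly when $c$ is a cube in $K$. You additionally make explicit the step the paper leaves implicit — that $\omega\notin K$ because $[K:\mathbb{Q}]=3$, so at most one root of $X^3+c$ lies in $K$ and the full $2$-torsion cannot occur — which is a worthwhile clarification but not a different argument.
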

\begin{proof}
If $ P=(x,y)$ is a point of order $2$, then $y=0$ and $x$ satisfies the polynomial equation $x^3+c=0$. Hence $c$ has to be a cube in $K$. Thus, in this case, $E(K)[2] \cong \mathbb{Z} / 2 \mathbb{Z}$; otherwise it is trivial.
\end{proof}

\smallskip

\begin{lemma}
\label{Q3}
Let $E$ be a rational Mordell curve. Then,  
$$
E(K)[3] \cong 
\begin{cases}
\mathbb{Z} / 3 \mathbb{Z}, & \text{ if } c \mbox{ is a square in } K,\\\mathbb{Z} / 3 \mathbb{Z}, & \mbox{ if } -3c \mbox{ is a square in }K \mbox{ and } 4c \mbox{ is a cube in } K,\\
\mathcal{O}, & \text{ otherwise}.
\end{cases}
$$
\end{lemma}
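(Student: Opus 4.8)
The plan is to read off the points of order $3$ directly from the $3$-division polynomial of $E$. For $E: y^2 = x^3 + c$ one has $\psi_3(x) = 3x^4 + 12cx = 3x\,(x^3 + 4c)$, so a point $P = (x,y) \neq \mathcal{O}$ satisfies $3P = \mathcal{O}$ exactly when $x = 0$ or $x^3 = -4c$. Note that $c \neq 0$ (otherwise $E$ is singular), so in either case $y \neq 0$ and such a $P$ genuinely has order $3$. I would then split into these two cases.

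First, if $x = 0$ then $y^2 = c$, so a point with $x = 0$ is $K$-rational precisely when $c$ is a square in $K$, and it contributes the pair $(0,\pm\sqrt{c})$, which together with $\mathcal{O}$ forms a subgroup isomorphic to $\mathbb{Z}/3\mathbb{Z}$. Second, the equation $x^3 = -4c$ has a root in $K$ exactly when $4c$ (equivalently $-4c$) is a cube in $K$; for such a root one computes $y^2 = x^3 + c = -3c$, so the associated points are $K$-rational if and only if, in addition, $-3c$ is a square in $K$, and they again span a subgroup isomorphic to $\mathbb{Z}/3\mathbb{Z}$.

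It then remains to see that $E(K)[3]$ is never all of $\mathbb{Z}/3\mathbb{Z}\oplus\mathbb{Z}/3\mathbb{Z}$: if it were, all four roots of $\psi_3$ would lie in $K$, so $x^3 = -4c$ would split completely over $K$, forcing $\omega \in K$; this is impossible since $[\mathbb{Q}(\omega):\mathbb{Q}] = 2$ does not divide $3 = [K:\mathbb{Q}]$. Hence $E(K)[3]$ is cyclic, and combining the two cases above yields the claimed description. As a consistency check one notes that the two displayed sufficient conditions are in fact mutually exclusive over a cubic field, since $c$ and $-3c$ cannot both be squares in $K$ without $-3$ being a square in $K$, i.e. $\omega \in K$, again a contradiction. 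I do not anticipate any genuine obstacle here; the only point requiring care is the bookkeeping that two $K$-rational points of order $3$ with distinct $x$-coordinates would generate the full $3$-torsion, which is exactly what the degree argument rules out.
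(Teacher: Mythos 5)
Your proof is correct and follows essentially the same route as the paper: factor the $3$-division polynomial as $3x(x^3+4c)$ and read off the two cases $x=0$ (giving $y^2=c$) and $x^3=-4c$ (giving $y^2=-3c$). The only addition is your explicit check that full $3$-torsion, and simultaneous occurrence of the two cases, are ruled out because they would force $\omega\in K$ against $[K:\mathbb{Q}]=3$ — a point the paper leaves implicit but which is worth making.
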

\begin{proof}
If $ P=(x,y)$ is a point of order $3$, then $x$ satisfies the polynomial equation  $x(x^3+4c)=0$. 

\smallskip

If $x=0$, then $c$ is a square in $K$. In that case, we conclude that $E(K)[3]\cong \mathbb{Z} / 3 \mathbb{Z}$. 

\smallskip

If $x \neq 0$, then we get $x^3+4c=0$ and hence $y^2 = -3c$. Thus $4c$ is a cube in $K$ and $-3c$ is a square in $K$. Hence, in this case, we have $E(K)[3] \cong \mathbb{Z} / 3 \mathbb{Z}$.
\end{proof}

\smallskip

\begin{lemma}
\label{Q6}
Let $E$ be a rational Mordell curve. Then,  
$$
E(K)[6] \cong 
\begin{cases}
\mathbb{Z} / 6 \mathbb{Z}, & \text{ if } c \mbox{ is a square as well as cube in } K,\\\mathbb{Z} / 6 \mathbb{Z}, & \mbox{ if } c =-27 \mbox{ and }4 \mbox{ is a cube in } K,
\\ \mathbb{Z} / 3 \mathbb{Z}, & \mbox{ if } c \mbox{ is a square but not a cube in }K,
\\ \mathbb{Z} / 3 \mathbb{Z}, & \mbox{ if } -3c \mbox{ is a square in }K \mbox{ and }4c \mbox{ is a cube in }K,
\\\mathbb{Z} / 2 \mathbb{Z}, & \mbox{ if } c \neq -27 \mbox{ is a cube but not a square in }K,\\ \mathbb{Z} / 2 \mathbb{Z}, & \mbox{ if } c = -27 \mbox{ but }4 \mbox{ is not a cube in }K,\\ \mathcal{O}, & \text{ otherwise}.
\end{cases}
$$
\end{lemma}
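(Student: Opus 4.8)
The plan is to deduce the statement from Lemmas~\ref{Q2} and~\ref{Q3} by splitting the $6$-torsion into its $2$-primary and $3$-primary parts. Since $\gcd(2,3)=1$, for the abelian group $E(K)$ one has the canonical decomposition $E(K)[6]\cong E(K)[2]\oplus E(K)[3]$ (the Chinese Remainder Theorem applied to the torsion subgroup; equivalently, any subgroup of $E(\overline{K})[6]\cong(\mathbb{Z}/6\mathbb{Z})^{2}$ is the direct sum of its $2$-part and its $3$-part). Hence it suffices to read off $E(K)[2]$ from Lemma~\ref{Q2} and $E(K)[3]$ from Lemma~\ref{Q3} and multiply the two factors: a summand $\mathbb{Z}/2\mathbb{Z}$ is present exactly when $c$ is a cube in $K$, and a summand $\mathbb{Z}/3\mathbb{Z}$ is present exactly when $c$ is a square in $K$, or $-3c$ is a square and $4c$ is a cube in $K$.

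From here the verification is organized according to whether $c$ is a cube in $K$. If $c$ is not a cube, then $E(K)[6]=E(K)[3]$, which is $\mathbb{Z}/3\mathbb{Z}$ under either condition of Lemma~\ref{Q3} and trivial otherwise; if $c$ is a cube, then $E(K)[6]$ is $\mathbb{Z}/6\mathbb{Z}$ when in addition one of those conditions holds and $\mathbb{Z}/2\mathbb{Z}$ when neither does. To recast this in the stated form I would use two elementary facts about the cubic base field $K$. First, $K/\mathbb{Q}$ has odd degree and so contains no quadratic subfield; therefore, for $d\in\mathbb{Q}$, "$d$ is a square in $K$" is equivalent to "$d$ is a square in $\mathbb{Q}$", and in particular "$-3c$ is a square in $K$" forces $c=-3k^{2}$ for some integer $k$. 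Second, $-27=(-3)^{3}$ is a cube in every field, so when $c=-27$ the factor $E(K)[2]=\mathbb{Z}/2\mathbb{Z}$ is automatic, whereas $-3c=81$ is a square while $4c=-108=(-3)^{3}\cdot 4$ is a cube in $K$ if and only if $4$ is a cube in $K$; this is precisely what isolates the two lines involving $c=-27$.

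The delicate point --- and the step I expect to be the main obstacle --- is controlling the interaction of "$c$ is a cube" with the second, less transparent $3$-torsion condition "$-3c$ is a square and $4c$ is a cube". When $c$ is a cube, "$4c$ is a cube" is equivalent to "$4$ is a cube", and "$4$ is a cube in a cubic field" forces $K=\mathbb{Q}(\sqrt[3]{4})=\mathbb{Q}(\sqrt[3]{2})$; combining this with $-3c$ a square and $c$ a cube in $\mathbb{Q}(\sqrt[3]{2})$, one checks that, up to sixth powers in $K$, $c$ must equal $-27$, so this branch produces no $\mathbb{Z}/6\mathbb{Z}$ beyond the "$c$ square and cube" line and the "$c=-27$, $4$ cube" line. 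Once this is settled, the remaining entries fall out at once: $c$ a square but not a cube gives $\mathbb{Z}/3\mathbb{Z}$; $c$ a cube but not a square with $c\neq-27$, or with $c=-27$ and $4$ not a cube, gives $\mathbb{Z}/2\mathbb{Z}$; the alternative "$-3c$ a square, $4c$ a cube" gives the remaining $\mathbb{Z}/3\mathbb{Z}$ line; and every other $c$ yields the trivial group, completing the case list.
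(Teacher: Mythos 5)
Your proposal is correct and follows essentially the same route as the paper: both reduce $E(K)[6]$ to the direct sum of $E(K)[2]$ and $E(K)[3]$ via Lemmas \ref{Q2} and \ref{Q3}, and both isolate the same delicate subcase ($c$ a cube, $4c$ a cube, $-3c$ a square) by noting that $4$ being a cube forces $K\cong\mathbb{Q}(\sqrt[3]{2})$ while $-3c$ being a square forces $3$ to divide the cube-free part of $c$, so that $c$ must be the rational cube $-27$. The paper merely organizes the identical computation as a four-way case split on whether $c$ is a square and/or a cube in $K$.
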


\begin{proof}

\smallskip

\noindent{\bf Case 1.} ($c$ is a square as well as cube in $K$)

\smallskip

Since $c$ is a square in $K$, by Lemma \ref{Q3}, there are two points of order $3$ in $E(K)[6]$. Also, since $c$ is a cube in $K$, by Lemma \ref{Q2}, there is only one point of order $2$ in $E(K)[6]$. Hence,  it has an element of order $6$. Thus, we conclude that $E(K)[6] \cong \mathbb Z/6 \mathbb Z$.

\smallskip

\noindent{\bf Case 2.} ($c$ is a cube in $K$ but not a square in $K$)

\smallskip

Since $c$ is a cube in $K$, $E(K)[6]$ has  only one element of order $2$ by Lemma \ref{Q2}. If $E(K)[6]$ has an element of order $3$, by Lemma \ref{Q3}, we see that  $-3c$ is a square and $4c$ is a cube in $K$. Since both $c$ and $4c$ are cubes, we see that $4$ is a cube in $K$. If $c$ is not a cube of an integer, we observe that $K = \mathbb{Q}(c^{1/3})$, which is not possible as $c$ is not a power of $2$. Hence $c$ is a cube of an integer. Also $-3c$ is a square of an integer. Hence $c=-27$. In this case, $E(K)[6] \cong \mathbb Z/6 \mathbb Z$ when $4$ is a cube in $K$. Thus, if $c \neq -27$ or $4$ is not a cube in $K$, we conclude that $E(K)[6] \cong \mathbb Z/2 \mathbb Z$.

\smallskip

\noindent{\bf Case 3.} ($c$ is a square but not a cube in $K$)

\smallskip

Since $c$ is not a cube in $K$, by Lemma \ref{Q2}, $E(K)[6]$ has no element of order $2$. Since $c$ is a square, by Lemma \ref{Q3}, we conclude that $E(K)[6] \cong \mathbb Z/3 \mathbb Z$.

\smallskip

\noindent{\bf Case 4.} ($c$ is neither a square nor a cube in $K$)

Since $c$ is not a cube in $K$, by Lemma \ref{Q2}, $E(K)[6]$ has no element of order $2$. If $E(K)[6]$ has an element of order $3$, by Lemma \ref{Q3}, we see that  $-3c$ is a square in $K$ and $4c$ is a cube in $K$. Thus, we conclude that  $E(K)[6] \cong \mathbb Z/3 \mathbb Z$.
\end{proof}

\smallskip

\begin{lemma}\label{Q4}
$T$ has no element of order $4$.
\end{lemma}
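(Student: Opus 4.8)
The plan is to run exactly the reduction-modulo-$\mathcal{P}$ argument used in the proof of Lemma \ref{Qq}, but with $4$ in place of the prime $q$ and with a congruence condition on the residue characteristic modulo $4$ doing the work. Suppose, for contradiction, that $T$ contains an element of order $4$; then $4 \mid |T|$. First I would invoke Dirichlet's theorem on primes in arithmetic progressions to choose a prime $p \equiv 5 \pmod{12}$ (possible since $\gcd(5,12)=1$), so that $p \equiv 2 \pmod 3$ and $p \equiv 1 \pmod 4$ simultaneously. Since $E$ has bad reduction at only finitely many primes and, by Proposition \ref{reduction}, the reduction map fails to be injective only at finitely many prime ideals (hence at finitely many rational primes), I may take such a $p$ at which $E$ has good reduction and for which the reduction map is injective at every prime ideal of $\mathcal{O}_K$ lying above $p$.

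Next, writing $p\mathcal{O}_K = \mathcal{P}_1^{e_1}\mathcal{P}_2^{e_2}\mathcal{P}_3^{e_3}$ with residual degrees $f_i$ satisfying $\sum_i e_i f_i = 3$, I note that since $3$ is odd, not every $f_i$ can be even, so there is a prime ideal $\mathcal{P}_j$ with $f_j \in \{1,3\}$. For that prime ideal the residue field is $\mathbb{F}_{p^{f_j}}$ with $f_j$ odd, and since $p \equiv 2 \pmod 3$, Lemma \ref{dey1} gives $|\overline{E}(\mathcal{O}_K/\mathcal{P}_j)| = p^{f_j}+1$. Injectivity of the reduction map (Proposition \ref{reduction}) then shows that $T$ is isomorphic to a subgroup of $\overline{E}(\mathcal{O}_K/\mathcal{P}_j)$, so $|T| \mid p^{f_j}+1$, and in particular $4 \mid p^{f_j}+1$. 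On the other hand, $p \equiv 1 \pmod 4$ forces $p^{f_j} \equiv 1 \pmod 4$, whence $p^{f_j}+1 \equiv 2 \pmod 4$, a contradiction. This completes the proof that $T$ has no element of order $4$.

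The only point that needs care is the choice of the arithmetic progression: one must arrange that the residue degrees which are guaranteed to occur (those in $\{1,3\}$, which are odd) give $p^{f_j}+1 \equiv 2 \pmod 4$, and picking $p \equiv 1 \pmod 4$ — together with $p \equiv 2 \pmod 3$ to keep Lemma \ref{dey1} applicable — achieves exactly this. I do not expect any genuine obstacle here, since the argument is a direct analogue of Lemma \ref{Qq}; an alternative route via the $4$-division polynomial and the halving equations for the $2$-torsion point $2P$ is also available but is messier and not needed.
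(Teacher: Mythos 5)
Your proof is correct, but it takes a genuinely different route from the paper's. The paper argues directly from the duplication formula: if $P$ has order $4$ then $y(2P)=0$ forces $x^6+20cx^3-8c^2=0$, hence $x^3=-10c\pm 6c\sqrt{3}$, so $\sqrt{3}\in K$, which is impossible in a cubic field. You instead recycle the reduction template of Lemma \ref{Qq}: choosing $p\equiv 5\pmod{12}$ guarantees both that Lemma \ref{dey1} applies and that the guaranteed odd residue degree $f_j\in\{1,3\}$ gives $|\overline{E}(\mathcal{O}_K/\mathcal{P}_j)|=p^{f_j}+1\equiv 2\pmod 4$, contradicting $4\mid |T|$. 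All the steps check out: the existence of an odd $f_j$ follows because $\sum e_if_i=3$ is odd, and the injectivity and good-reduction caveats are handled exactly as the paper handles them elsewhere. Your approach buys uniformity (no division-polynomial computation, and it rules out order $4$ over any odd-degree field in one stroke), while the paper's computation buys precise information about which quadratic extension an order-$4$ point would generate --- information that becomes essential in the sextic case (Lemma \ref{r4}), where your congruence argument would break down since all residue degrees above $p$ could be even.
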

\begin{proof}
Suppose $P = (x,y) \in T$ is an element of order $4$. Then, we get  $$y(2P) = 0 \Longleftrightarrow x^6 + 20cx^3 -8c^2 = 0 \Longleftrightarrow x^3 = -10c \pm 6c\sqrt{3}.$$ Since $x^3 \in K$, we see that $-10c \pm 6c\sqrt{3} \in K$ which in turn implies $\sqrt{3} \in K$, which is a contradiction as $K$ is a cubic field. This proves the lemma.
\end{proof}

\smallskip

\begin{lemma}\label{Q9}
$T$ has an element of order $9$ if and only if $c = 16$ and $K = \mathbb{Q}(r)$ with $r$ satisfying the relation $r^3 - 3r^2 + 1 = 0$. 
\end{lemma}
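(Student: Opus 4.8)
The plan is to treat the two implications separately; for the harder ``only if'' direction I would split according to the two possible shapes of a $3$-torsion point given by Lemma~\ref{Q3}. Throughout I use the triplication formula $x(3P)=\phi_3(x)/\psi_3(x)^2$, where for $E\colon y^2=x^3+c$ one has $\psi_3(x)=3x(x^3+4c)$ and $\phi_3(x)=x^9-96cx^6+48c^2x^3+64c^3$.

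\emph{The ``if'' direction.} I would exhibit the point. With $c=16$ and $r^3-3r^2+1=0$, set $P=(4r,\;8r^2-8r-4)$ on $E\colon y^2=x^3+16$ over $K=\mathbb{Q}(r)$. That $P\in E(K)$ is the identity $(4r)^3+16=192r^2-48=(8r^2-8r-4)^2$, checked by reducing modulo $r^3-3r^2+1$ (equivalently, $(2r^2-2r-1)^2=12r^2-3$). To see that $P$ has order $9$, one verifies $\phi_3(4r)=0$ by reducing $(4r)^9-1536(4r)^6+12288(4r)^3+262144$ modulo $r^3-3r^2+1$, while $\psi_3(4r)\neq0$; hence $x(3P)=0$, so $3P=(0,\pm4)$, which has order $3$ since $2(0,4)=-(0,4)$. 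As $x(P)=4r$ is neither $0$ nor a cube root of $-64$, the point $P$ is not itself $3$-torsion, so $P$ has order $9$.

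\emph{The ``only if'' direction.} Suppose $P\in E(K)$ has order $9$; then $Q:=3P$ has order $3$, so by Lemma~\ref{Q3} and the fact that a cubic field has no quadratic subfield, either (I)~$x(Q)=0$ and $c$ is a square in $\mathbb{Q}$, or (II)~$x(Q)^3=-4c$, $-3c$ is a square in $\mathbb{Q}$, and $4c$ is a cube in $K$; these are exclusive, since $c$ and $-3c$ cannot both be squares. In Case~(I), $\psi_3(x(P))\neq0$ and $x(3P)=0$ force $\phi_3(x(P))=0$, so (dividing the resulting equation for $x(P)^3$ by $c^3$) the quantity $v:=x(P)^3/c$ satisfies the \emph{curve-independent} cubic $v^3-96v^2+48v+64=0$. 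This cubic has no rational root, hence is irreducible, and its discriminant $2^{12}\cdot3^{10}$ is a perfect square, so its root field is a cyclic cubic field; only $3$ can ramify in it (a cyclic cubic field is unramified at $2$), so its conductor is $9$ and it equals $\mathbb{Q}(\zeta_9)^+=\mathbb{Q}(r)$ (the cubic field of discriminant $81$). Since $\mathbb{Q}(v)\subseteq K$ with both cubic, $K=\mathbb{Q}(r)$. Moreover $x(P)^3=cv\in(K^*)^3$ and $y(P)^2=c(v+1)\in(K^*)^2$ (note $v+1\neq0$); since $K/\mathbb{Q}$ is cyclic cubic, $(K^*)^3\cap\mathbb{Q}^*=(\mathbb{Q}^*)^3$ and $(K^*)^2\cap\mathbb{Q}^*=(\mathbb{Q}^*)^2$. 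Every root $v_0$ of the cubic also satisfies $16v_0\in(K^*)^3$ and $16(v_0+1)\in(K^*)^2$ (these come from the point found in the ``if'' direction, conjugated by $\operatorname{Gal}(K/\mathbb{Q})$), so dividing yields $c/16\in(\mathbb{Q}^*)^3\cap(\mathbb{Q}^*)^2=(\mathbb{Q}^*)^6$; a $6$th-power-free integer of this shape is $c=16$.

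\emph{Case~(II)} I would handle through the complex multiplication. The order-$3$ automorphism $\omega\colon(x,y)\mapsto(\zeta_3x,y)$, defined over $\mathbb{Q}(\zeta_3)$, realizes $\operatorname{End}(E)=\mathbb{Z}[\omega]$, has kernel $E[1-\omega]=\{\mathcal O,(0,\pm\sqrt c)\}$, and satisfies $(1-\omega)E[3]=E[1-\omega]$ because $3$ is a unit times $(1-\omega)^2$. Since $x(Q)\neq0$, $Q\notin E[1-\omega]$, so $(1-\omega)Q$ generates $E[1-\omega]$, i.e.\ equals $(0,\pm\sqrt c)$; hence $\widetilde P:=(1-\omega)P\in E(K(\omega))$ has order $9$ with $3\widetilde P=(0,\pm\sqrt c)$. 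Running the Case~(I) computation for $\widetilde P$ over $K(\omega)=K(\zeta_3)$ (a sextic field, as $\omega\notin K$) yields $\mathbb{Q}(\zeta_9)^+\subseteq K(\zeta_3)$, hence $\mathbb{Q}(\zeta_9)=\mathbb{Q}(\zeta_9)^+(\zeta_3)\subseteq K(\zeta_3)$, and by a degree count $K(\zeta_3)=\mathbb{Q}(\zeta_9)$; so $K$ is its unique cubic subfield, $\mathbb{Q}(\zeta_9)^+=\mathbb{Q}(r)$. Then ``$4c$ is a cube in the cyclic cubic field $\mathbb{Q}(r)$'' forces $4c\in(\mathbb{Q}^*)^3$ (otherwise $\mathbb{Q}(r)$ would be a non-Galois cubic field), and together with $-3c\in(\mathbb{Q}^*)^2$ a short $2$- and $3$-adic valuation count leaves only the $6$th-power-free value $c=-432$. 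It remains to rule out $c=-432$: a hypothetical order-$9$ point would have $x$-coordinate $x_0\tau$ with $x_0^3=-4c$ (so $x_0=12$) and $\tau\in\mathbb{Q}(r)$ a root of the curve-independent degree-$9$ polynomial $h(\tau)=\tau^9-9\tau^8+24\tau^6+18\tau^5+3\tau^3-9\tau^2-1$, obtained from $\phi_3(X)-x_0\psi_3(X)^2=0$ by substituting $X=x_0\tau$ and dividing by $x_0^9$. I expect the main obstacle to be exactly this last point: showing $h$ has no root in $\mathbb{Q}(r)$. I would verify that $h$ is irreducible over $\mathbb{Q}$ (at minimum, that it has no rational root and no cubic factor of square discriminant), which excludes a root in any cubic field. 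A safe fallback, should $h$ factor awkwardly, is to compute the torsion subgroup of $y^2=x^3-432$ over the fixed field $\mathbb{Q}(r)$ directly and observe that it is $\mathbb{Z}/3\mathbb{Z}$.
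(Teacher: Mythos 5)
Your proposal is correct, and while it follows the paper's overall skeleton --- split according to the two shapes of the $3$-torsion point $3P$ supplied by Lemma~\ref{Q3} and exploit the triplication polynomial $\phi_3(x)=x^9-96cx^6+48c^2x^3+64c^3$ --- it executes the decisive steps quite differently. In the branch $x(3P)=0$, the paper rewrites $\phi_3=0$ as $(x^3+4c)^3=108c\,x^6$, extracts a cube root using $\omega\notin K$ to reach $r^3-3r^2+1=0$ together with a companion cubic for $\gamma=y/\sqrt{c}$, and then deduces $c=16$ from the normality of $\mathbb{Q}(r)$ versus the non-normality of $\mathbb{Q}((4c)^{1/3})$. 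You instead work with the single curve-independent cubic $v^3-96v^2+48v+64$ satisfied by $v=x^3/c$, identify its root field as $\mathbb{Q}(\zeta_9)^+$ by a square-discriminant/conductor argument, and obtain $c=16$ by dividing the data $cv\in(K^*)^3$, $c(v+1)\in(K^*)^2$ by the corresponding data of the explicit $c=16$ point and using $(K^*)^n\cap\mathbb{Q}^*=(\mathbb{Q}^*)^n$ for a cyclic cubic field; this is arguably cleaner, and your ``if'' direction actually exhibits and verifies the point $(4r,8r^2-8r-4)$ where the paper only asserts existence. In the branch $x(3P)^3=-4c$ both arguments terminate at the very same degree-$9$ polynomial $\tau^9-9\tau^8+24\tau^6+18\tau^5+3\tau^3-9\tau^2-1$ and the same irreducibility claim over $\mathbb{Q}$ (which the paper checks in {\it magma} and you also leave to computation); your complex-multiplication detour via $(1-\omega)P$, which first pins down $K(\zeta_3)=\mathbb{Q}(\zeta_9)$ and $c=-432$, is correct but logically superfluous, since irreducibility of that degree-$9$ polynomial already excludes a root in any cubic field for every $c$ in this branch. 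So the two proofs stand or fall on the same computational fact, but your route buys an explicit witness for the forward implication and a more conceptual identification of the field $\mathbb{Q}(r)$.
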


\begin{proof}
Let $P = (x,y)$ be an element of order $9$ in $E(K)_{tors}$. Then $3P$ is a point of order $3$ in $E(K)_{tors}$ and hence  we get $x(3P) ({x(3P)}^3+4c)=0$.

If $({x(3P)}^3+4c)=0$, then $c = - \frac{a^3}{4}$ for some $a \in K$, where $x(3P) = a$. Now, let $\alpha$ be the slope of the line joining $P$ and $2P$. Then by the addition formula, we have
$$ {\alpha}^2 -x -\frac{x(x^3-8c)}{4(x^3+c)}=a= x(3P),$$ 
where $\alpha = \left( \frac{7x^6-4cx^3+16c^2}{6xy(x^3+4c)}\right)^2$.

This equation can be written explicitly as 
$$
x^9 - 9ax^8 + 24a^3x^6 + 18a^4x^5 + 3a^6x^3 - 9a^7x^2 - a^9 = 0.
$$
By substituting $x = at$ for some $t \in K$, the above equation becomes
$$
t^9 - 9t^8 + 24t^6 + 18t^5 + 3t^3 - 9t^2 - 1 = 0.
$$
Using {\it magma}, we see that the polynomial $f(X) = X^9 - 9X^8 + 24X^6 + 18X^5 + 3X^3 - 9X^2 - 1$ is irreducible over $\mathbb{Q}$. Since $[K: \mathbb{Q}]=3$, the relation $t^9 - 9t^8 + 24t^6 + 18t^5 + 3t^3 - 9t^2 - 1=0$ is impossible. Therefore we get $x(3P) ^3+4c \neq 0$.

Thus, we conclude $x(3P)=0$, which implies that $c$ is a square in $K$. Again, by the addition formula, we get $$ \left( \frac{7x^6-4cx^3+16c^2}{6xy(x^3+4c)} \right)^2 -x -\frac{x(x^3-8c)}{4(x^3+c)}=0.$$ 
The above equation reduces to $$ (x^3 + c)(x^9 -96cx^6+48c^2 x^3+64c^3)=0.
$$ 

 If $x^3+c=0$ then we have $2P=\mathcal{O}$, which is a contradiction to $P$ is of order $9$. Hence we get 
 \begin{equation}\label{change}
x^9 -96cx^6+48c^2 x^3+64c^3=0.
\end{equation}
 Putting $4c = t \in K$, the above equation further reduces to
 $$x^9-24tx^6+3t^2x^3+t^3=0.$$
 This equation can be rewritten as 
 \begin{equation}\label{change1}
 (x^3+t)^3=27tx^6,
 \end{equation}
 which shows that $t$ is a cube in $K$, say $v^3$ for some $v \in K$. Since cube root of unity, $\omega \notin K$, from equation \eqref{change1}, we have
 $$x^3+v^3 = 3vx^2.$$
 Substituting $\frac{x}{v} = r \in K$, the above equation reduces to
 \begin{equation}\label{change2}
 r^3-3r^2+1=0.
 \end{equation}
Since the polynomial $r^3-3r^2+1$ is an irreducible polynomial and $K$ is a cubic field, we see that $K = \mathbb{Q}(r)$. $\mathbb{Q}(r)/\mathbb{Q}$ is a normal extension as the equation \eqref{change2} has three real roots.

Also we have
$\frac{y^2}c = \frac{x^3}c+1$. Since $c$ is a square in $K$ with $c = \frac{v^3}{4}$ and $r = \frac{x}{v}$, by putting $\gamma = y/\sqrt{c} \in K$, we get
\begin{equation}\label{change3} 
\gamma^2 = 4r^3+1. 
\end{equation}
Combining equations \eqref{change2} and \eqref{change3}, we have $$ \gamma^6 -99 \gamma^4 +243 \gamma^2 -81=0,
$$
which further implies $$  (\gamma^3-9\gamma^2-9\gamma+9) (\gamma^3+9\gamma^2-9\gamma-9)=0.$$  
By letting $f(X)=X^3-9X^2-9X+9$, we see that either $f(\gamma)=0$ or $f(-\gamma)=0$. Without loss of generality, we assume $f(\gamma)=0$. Since $f(X)$ is irreducible over $\mathbb{Q}$, we conclude that $K=\mathbb{Q}(r) = \mathbb{Q}(\gamma)$. 

Hence, if $T$ has a point of order $9$ in $K$, then $c$ is a square in $K$ and $4c$ is a cube in $K$ where $K = \mathbb{Q}(r)$ with $r$ satisfying the relation $r^3 - 3r^2 + 1 = 0$. 

Conversely, if $c$ is a square in $K$ and $4c$ is a cube in $K$ where $K = \mathbb{Q}(r)$ with $r$ satisfying the relation $r^3 - 3r^2 + 1 = 0$, then we can show that $((4c)^{1/3}r, \pm c^{1/2} \gamma_1),((4c)^{1/3}r, \pm c^{1/2} \gamma_2),$ $((4c)^{1/3}r, \pm c^{1/2} \gamma_3)$ are all points of order $9$ in $K$, where  $\gamma_1, \gamma_2, \gamma_3$ are roots of the equation ${\gamma}^3 - 9{\gamma}^2 - 9{\gamma} + 9 = 0$.

If $4c$ is not a cube of an integer then we have $\mathbb{Q}((4c)^{1/3}) = \mathbb{Q}(r)$, which is not possible as $\mathbb{Q}(r)/\mathbb{Q}$ is a normal extension and $\mathbb{Q}((4c)^{1/3})/\mathbb{Q}$ is not a normal extension. Thus $4c$ is a cube of an integer. Since $c$ is a square of an intger also, we conclude that $c = 16$. 
\end{proof}

%\smallskip
\begin{lemma}\label{Q18}
$T$ has no element of order $18$ and $27$.
\end{lemma}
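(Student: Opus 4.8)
The plan is to rule out torsion of orders $18$ and $27$ by reducing each case to the existence of a point of order that we have already excluded, or to a polynomial relation that is incompatible with $[K:\mathbb{Q}]=3$. First I would handle order $27$. If $P \in T$ has order $27$, then $9P$ is a point of order $3$; by Lemma \ref{Q3} this forces either $c$ to be a square in $K$ or ($-3c$ a square and $4c$ a cube) in $K$. In either situation Lemma \ref{Q9} already characterizes exactly when a point of order $9$ exists: precisely when $c=16$ and $K=\mathbb{Q}(r)$ with $r^3-3r^2+1=0$. So the only way $T$ could contain a point of order $27$ is if $c=16$ and $K$ is that particular cubic field. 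Then $3P$ would be a point of order $9$, hence (by the proof of Lemma \ref{Q9}) $x(3P)$ must equal $(4c)^{1/3}r_i = 4 r_i$ for one of the conjugates $r_i$, and I would push the addition-formula computation one step further: writing the condition that tripling $P$ lands on the known order-$9$ point yields a degree-$27$ polynomial in $x$ (after the substitution $x = 4t$, a polynomial over $\mathbb{Q}$). A \emph{magma} check that the relevant factor of this polynomial has no factor of degree dividing $3$ over $\mathbb{Q}$ then gives the contradiction, exactly parallel to the argument in Lemma \ref{Q9}.

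For order $18$: if $P \in T$ has order $18$, then $2P$ has order $9$ and $9P$ has order $2$. By Lemma \ref{Q2} the existence of a $2$-torsion point forces $c$ to be a cube in $K$, and by Lemma \ref{Q9} the existence of a $9$-torsion point forces $c=16$ and $K=\mathbb{Q}(r)$, $r^3-3r^2+1=0$. But $c=16$ is a cube in $K$ only if $2 = 16^{1/3} \in K$ up to the choice of cube root, i.e.\ only if $x^3+16=0$ has a root in $K$; since $\mathbb{Q}(16^{1/3})=\mathbb{Q}(2^{1/3})$ is not a normal extension of $\mathbb{Q}$ while $K=\mathbb{Q}(r)$ is normal, these cubic fields cannot coincide. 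Hence $c$ cannot be a cube in $K=\mathbb{Q}(r)$, so no point of order $2$ exists there, contradicting the existence of $9P$. This kills order $18$.

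The genuinely new computational content is the order-$27$ step, where one must verify that forcing $3P$ onto a specific order-$9$ point produces a polynomial relation of degree prime to $3$ with no suitable rational factor; I expect this to be the main obstacle, since it requires carefully setting up the tripling map composed with the explicit coordinates $((4c)^{1/3}r_i, \pm c^{1/2}\gamma_j)$ from Lemma \ref{Q9}, clearing denominators, and invoking an irreducibility/factorization check in \emph{magma}. An alternative, cleaner route for order $27$ that I would try first is purely group-theoretic: a point of order $27$ would generate $\mathbb{Z}/27\mathbb{Z} \subseteq E(K)$, and since $E$ has CM by $\mathbb{Z}[\omega]$ and $K$ is cubic with $\omega \notin K$ (as $[\mathbb{Q}(\omega):\mathbb{Q}]=2 \nmid 3$), one can bound the $3$-primary part of $E(K)_{tors}$ by combining Lemma \ref{twist}-type arguments with reduction at a prime $p \equiv 2 \pmod 3$ where, by Lemmas \ref{dey1} and \ref{dey2}, $|\overline{E}(\mathbb{F}_{p^{f}})| = p^f+1$ with $f\in\{1,3\}$; choosing $p$ with $27 \nmid p^f+1$ (possible since $p^f+1 \equiv 2^f+1 \pmod{27}$ runs over $3$ and $9$, neither divisible by $27$) gives an immediate contradiction via the injectivity of reduction in Proposition \ref{reduction}. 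This last argument in fact handles both orders $18$ and $27$ at once, since $27 \nmid 18$ is irrelevant but the same prime $p$ with $18 \nmid p^f+1$ works when $2^f+1 \in \{3,9\}$ and neither is divisible by $9$ combined with the need for a factor of $2$; I would present the reduction argument as the primary proof and omit the addition-formula computation entirely.
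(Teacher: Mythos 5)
Your route is genuinely different from the paper's. The paper's proof is short: an element of order $18$ or $27$ gives one of order $9$, so by Lemma \ref{Q9} we are pinned to $c=16$ and $K=\mathbb{Q}(r)$ with $r^3-3r^2+1=0$, and a single \emph{magma} computation of the full torsion of that one curve over that one field shows $T\cong\mathbb{Z}/9\mathbb{Z}$. Your field-theoretic argument for order $18$ is a nice improvement: a point of order $18$ forces $16$ to be a cube in $K=\mathbb{Q}(r)$, hence $K=\mathbb{Q}(2^{1/3})$, which is impossible since $\mathbb{Q}(r)$ is Galois (the discriminant of $r^3-3r^2+1$ is $81$, a square) while $\mathbb{Q}(2^{1/3})$ is not. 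Likewise your reduction argument for order $27$ (a prime $p\equiv 2\pmod{27}$, residual degree $f\in\{1,3\}$, so $|T|$ divides $p^f+1\equiv 3$ or $9\pmod{27}$) is exactly the paper's own Lemma \ref{K27} and is sound. Together these two arguments prove the lemma without any machine computation, which is better than what the paper does.

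However, your closing claim --- that the reduction argument \emph{also} handles order $18$, so that you could ``present the reduction argument as the primary proof and omit the addition-formula computation entirely'' --- is false, and if you followed that plan the order-$18$ case would be unproved. For any odd $p\equiv 2\pmod 3$ one has $p^3+1=(p+1)(p^2-p+1)$ with both factors divisible by $3$ and $p+1$ even, so $18\mid p^3+1$ \emph{always}; since you cannot prevent the relevant prime of $K$ from having residual degree $3$, no choice of $p$ makes $18\nmid p^f+1$. (Your own parenthetical ``$2^f+1\in\{3,9\}$ and neither is divisible by $9$'' is the slip: $9$ is divisible by $9$.) So you must keep the normality argument for order $18$; the reduction argument can only be the primary proof for order $27$. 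The vague first sketch for order $27$ (pushing the tripling formula one step further and factoring a degree-$27$ polynomial in \emph{magma}) is unnecessary once you adopt the reduction argument, and as written it is not carried far enough to count as a proof on its own.
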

\begin{proof}
Suppose $T$ has an element of order $18$ or $27$. Hence there exist an element of order $9$ in $T$. Thus, by Lemma \ref{Q9}, we have $c=16$ and $K= \mathbb{Q}(r)$ with $r$ satisfying the relation $r^3-3r^2+1=0$. In this case, by using {\it magma}, we see that $T \cong \mathbb Z/9 \mathbb Z$, which is a contradiction. Hence $T$ has no element of order $18$ or $27$.
\end{proof}

\bigskip

\noindent{\it Proof of Theorem \ref{cubicQ}}. Combining Lemmas \ref{Qq}, \ref{Q6}, \ref{Q4}, \ref{Q9} and \ref{Q18},  we have the desired result. \qed

\bigskip

\section{Proof of theorem \ref{cubicK}}
Throughout this section, $K/\mathbb{Q}$ stands for a cubic number field.
We denote a Mordell curve of the form $y^2 = x^3 + c$ with $c \in K$, simply by $E$. Also, we denote $T$ as the torsion subgroup of $E(K)$.

\begin{lemma}\label{Kq}
For any odd prime $q \geq 5$, there is no element in $T$ of order $q$.
\end{lemma}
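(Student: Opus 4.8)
The plan is to mimic the argument used in the rational case (Lemma \ref{Qq}), but now being careful that $c$ need not be rational. Suppose for contradiction that $T$ has an element of order $q$ for some odd prime $q\ge 5$; then $q\mid |T|$. The key idea is to find a prime $p\equiv 2\pmod 3$ of good reduction for $E$ such that $q\mid p^{f}+1$ is forced for the relevant residue degree $f$, and then derive a numerical contradiction exactly as before. Since $\gcd(2,3q)=1$, Dirichlet's theorem on primes in arithmetic progressions gives infinitely many rational primes $p\equiv 2\pmod{3q}$, and since $E$ has only finitely many primes of bad reduction (the bad primes divide a fixed element of $K$, essentially $6c$ up to units), we may pick such a $p$ that is also a prime of good reduction. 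First I would write $p\mathcal{O}_K=\mathcal{P}_1^{e_1}\mathcal{P}_2^{e_2}\mathcal{P}_3^{e_3}$ with $0\le e_i\le 1$ (note $p$ may ramify in a general cubic field, but $\sum e_if_i=3$ still holds) and conclude that there is a prime $\mathcal{P}_j$ above $p$ with residue degree $f_j\in\{1,2,3\}$.

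The one genuinely new point, compared with Lemma \ref{Qq}, is that the residue field $\mathcal{O}_K/\mathcal{P}_j$ is $\mathbb{F}_{p^{f_j}}$ with $p\equiv 2\pmod 3$, so Lemma \ref{dey1} applies directly to the reduced curve $\overline E$ over this finite field: we get $|\overline E(\mathcal{O}_K/\mathcal{P}_j)| = p^{f_j}+1$ when $f_j$ is odd and $(p^{f_j/2}+1)^2$ when $f_j\equiv 2\pmod 4$; for $f_j\in\{1,2,3\}$ this covers $f_j=1,3$ (giving $p^{f_j}+1$) and $f_j=2$ (giving $(p+1)^2$). Then by Proposition \ref{reduction} the torsion $T$ injects into $\overline E(\mathcal{O}_K/\mathcal{P}_j)$ for all but finitely many $\mathcal{P}$ — so after possibly enlarging the excluded set of $p$'s we may assume injectivity — hence $q$ divides $|\overline E(\mathcal{O}_K/\mathcal{P}_j)|$. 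Reducing mod $q$ and using $p\equiv 2\pmod q$: if $f_j=1$ we get $q\mid 3$; if $f_j=3$ we get $q\mid 2^3+1=9$; if $f_j=2$ we get $q\mid (2+1)^2=9$. In every case $q\mid 9$ (or $q\mid 3$), contradicting $q\ge 5$.

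I do not expect a serious obstacle here: the argument is a near-verbatim transcription of Lemma \ref{Qq}, with the only adjustments being (i) allowing ramification ($e_i$ up to $1$, or more pedantically observing that ramification only helps since it forces smaller $f_j$), (ii) invoking Lemma \ref{dey1} over the residue field rather than over $\mathbb{F}_{p^{f_j}}$ abstractly — which is exactly what it states — and (iii) handling the $f_j=2$ case, which the cubic-field decomposition can in principle produce and which still yields $q\mid 9$. The mildest care needed is to note that "good reduction" and "injectivity of the reduction map" each exclude only finitely many primes, so the Dirichlet set $\{p\equiv 2\pmod{3q}\}$, being infinite, still contains a usable prime. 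I would close by remarking that this is why the statement is phrased for $q\ge 5$: the primes $q=3$ (and $q=2$) genuinely occur, as the later lemmas of this section will show.
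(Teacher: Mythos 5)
Your overall strategy matches the paper's, but there is a genuine error in how you treat the residue degrees. The paper never needs the case $f_j=2$: since $e_1f_1+e_2f_2+e_3f_3=3$ is odd, at least one prime above $p$ has odd residue degree, so one may always choose $\mathcal{P}_j$ with $f_j\in\{1,3\}$. You instead allow $f_j=2$ and claim that Lemma \ref{dey1} gives $|\overline{E}(\mathcal{O}_K/\mathcal{P}_j)|=(p+1)^2$. That lemma is stated only for integer $c$; here $c\in K$ is arbitrary, so its reduction $\bar c$ lies in $\mathbb{F}_{p^2}$ and need not come from the prime field. For such $\bar c$ the curve $y^2=x^3+\bar c$ is one of the six sextic twists of $y^2=x^3+1$ over $\mathbb{F}_{p^2}$, whose group orders are $(p\pm1)^2$ and $p^2\pm p+1$ --- not always $(p+1)^2$. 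Since $p\equiv 2\pmod q$ gives $p^2+p+1\equiv 7\pmod q$, the $f_j=2$ case yields no contradiction when $q=7$ (for instance $p=23\equiv 2\pmod{21}$ has $p^2+p+1=553=7\cdot 79$). So, as written, your argument does not rule out an element of order $7$.

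The repair is exactly the observation you skipped: discard $f_j=2$ by the parity of $3$ and work only with a prime of odd residue degree. For $f_j=1$ the reduction of $c$ lifts to an integer and Lemma \ref{dey1} applies; for $f_j=3$ you need Lemma \ref{dey2}, which the paper proves precisely because Lemma \ref{dey1} does not cover non-rational $c$, and which gives $p^3+1$ points over $\mathbb{F}_{p^3}$ for any $c$ in that field. With $f_j\in\{1,3\}$ the congruences $q\mid 3$ or $q\mid 9$ follow as you say, contradicting $q\ge 5$. The rest of your write-up (Dirichlet, finitely many primes of bad reduction, injectivity of the reduction map away from finitely many $\mathcal{P}$) agrees with the paper and is fine.
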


\begin{proof}
Suppose there exists an element of order $q$ in $T$. Then $q$ divides $|T|$. Since $\gcd (2, 3q)=1$, by Dirichlet's theorem on primes in arithmetic progressions, there exist infinitely many primes $ p \equiv 2 \pmod {3q}$. Therefore there is a prime $p\equiv 2 \pmod{ 3q}$ such that $E$ has good reduction at some prime ideal lying above $p$. We  assume that $p \mathcal{O}_K = {\mathcal{P}}_{1}^{e_1} {\mathcal{P}}_{2}^{e_2}{\mathcal{P}}_{3}^{e_3}$ is the ideal decomposition in $\mathcal{O}_K$ where $\mathcal{P}_1,\mathcal{P}_2,\mathcal{P}_3$ are prime ideals in $\mathcal{O}_K$ lying above $p$ and $0 \leq e_i \leq 1$. If $f_i$'s are then residual degree of $\mathcal{P}_i$ for $ i=1,2, 3$, then we know that $e_1f_1+e_2f_2 +e_3f_3 = 3$. Therefore there exists a prime ideal $\mathcal{P}_j$ such that $f_j =1 $ or $3$ for some $ j=1,2,3$.

\smallskip
Now, we consider the reduction mod $\mathcal{P}_j$ map.  Note that $|\mathcal{O}_K/\mathcal{P}_j| = p^{f_j}$, where $f_i = 1$ or $3$. Since $p\equiv 2 \pmod 3$, by Lemmas \ref{dey1} and \ref{dey2}, we get $|\overline{E}(\mathcal{O}_K/\mathcal{P}_j)| = p^{f_j} + 1$. As $q$ divides $|T|$, by Proposition \ref{reduction}, we conclude that $q \mid (p^{f_j} + 1)$. Since $p\equiv 2 \pmod 3$, we get $0 \equiv p^{f_j}+1 \equiv 2^{f_j}+1 \pmod q$. Since $f_j =1 $ or $3$, we have either $3 \equiv 0 \pmod q$ or $9 \equiv 0 \pmod q$ respectively, which is a contradiction to $q \geq 5$.
\end{proof}

\smallskip

\begin{lemma}
\label{K2}
Let $E$ be a Mordell curve over $K$. Then,  
$$
E(K)[2] \cong 
\begin{cases}
\mathbb{Z} / 2 \mathbb{Z}, & \text{ if } c \mbox{ is a cube in } K,\\
\mathcal{O}, & \text{ otherwise}.
\end{cases}
$$
\end{lemma}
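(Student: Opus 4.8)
The plan is to run exactly the elementary argument used for Lemma \ref{Q2}, now allowing $c$ to be an arbitrary element of the cubic field $K$ rather than an integer. First I would use that for a curve in short Weierstrass form $y^2 = x^3 + c$ a point $P = (x,y)$ satisfies $2P = \mathcal{O}$ if and only if $y = 0$; hence a nontrivial element of $E(K)[2]$ corresponds precisely to a root $x \in K$ of $x^3 + c = 0$. In particular $E(K)[2]$ is nontrivial if and only if $-c$, equivalently $c$ (since $-1$ is a cube), is a cube in $K$.

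Next I would determine the exact group structure in the nontrivial case. Writing $c = d^3$ with $d \in K$, one factors $x^3 + c = (x + d)(x^2 - dx + d^2)$, and the roots of the quadratic factor are $-d\omega$ and $-d\omega^2$, where $\omega$ is a primitive cube root of unity. Since $[K : \mathbb{Q}] = 3$ is odd, $K$ cannot contain the quadratic field $\mathbb{Q}(\omega)$, so $\omega \notin K$ and $x = -d$ is the unique root of $x^3 + c$ in $K$. Thus $E(K)[2]$ contains exactly one point of order $2$, so $E(K)[2] \cong \mathbb{Z}/2\mathbb{Z}$; and if $c$ is not a cube in $K$ there is no root in $K$, so $E(K)[2] = \{\mathcal{O}\}$.

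There is essentially no obstacle here. The only point needing a line of justification beyond the rational case of Lemma \ref{Q2} is the exclusion of the Klein four-group $(\mathbb{Z}/2\mathbb{Z})^2$, and this is handled by the observation that $\omega \notin K$ whenever $K$ is cubic, which forces the polynomial $x^3 + c$ to have at most one root in $K$.
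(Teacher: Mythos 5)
Your argument is correct and is essentially the paper's own: the paper proves Lemma \ref{K2} by referring back to the elementary computation of Lemma \ref{Q2}, namely that a $2$-torsion point has $y=0$ and $x$ a root of $x^3+c$ in $K$. Your extra remark that $\omega\notin K$ for a cubic field $K$ (so at most one root of $x^3+c$ lies in $K$, ruling out $(\mathbb{Z}/2\mathbb{Z})^2$) is a detail the paper leaves implicit, and it is handled correctly.
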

\begin{proof}
Proof is similar to the proof of Lemma \ref{Q2} and we omit the proof here.
\end{proof}

\smallskip

\begin{lemma}
\label{K3}
Let $E$ be a Mordell curve over $K$. Then,  
$$
E(K)[3] \cong 
\begin{cases}
\mathbb{Z} / 3 \mathbb{Z}, & \text{ if } c \mbox{ is a square in } K,\\\mathbb{Z} / 3 \mathbb{Z}, & \mbox{ if } -3c \mbox{ is a square in }K \mbox{ and } 4c \mbox{ is a cube in } K,\\
\mathcal{O}, & \text{ otherwise}.
\end{cases}
$$
\end{lemma}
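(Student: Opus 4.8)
The plan is to follow the proof of Lemma~\ref{Q3} almost verbatim; the only difference is that $c$ now lies in the cubic field $K$ rather than in $\mathbb{Z}$, and this plays no essential role. First I would write down the $3$-division polynomial of $E\colon y^2=x^3+c$, namely $\psi_3(x)=3x(x^3+4c)$, so that a nonzero point $P=(x,y)$ has order $3$ precisely when $x=0$ or $x^3=-4c$. Since $c\neq 0$ (otherwise $E$ is singular), in both cases the corresponding $y$-coordinate is nonzero, so every $K$-rational root of $\psi_3$ contributes, together with its negative, a genuine pair of points of order $3$.

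Next I would separate the two cases. If $x=0$ then $y^2=c$, so a point of this shape lies in $E(K)$ exactly when $c$ is a square in $K$; conversely $c=s^2$ gives the order-$3$ points $(0,\pm s)$. If $x\neq 0$ then $x^3=-4c$, which forces $4c$ (equivalently $-4c$, since $-1$ is a cube) to be a cube in $K$, and then $y^2=x^3+c=-3c$ forces $-3c$ to be a square in $K$; conversely, if $4c$ is a cube and $-3c$ is a square in $K$, then taking the unique cube root $x_0$ of $-4c$ in $K$ (unique because $\omega\notin K$) yields the order-$3$ points $(x_0,\pm\sqrt{-3c})$. Thus there is a point of order $3$ in $E(K)$ if and only if $c$ is a square in $K$, or $4c$ is a cube and $-3c$ is a square in $K$.

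Finally I would record why $E(K)[3]$ is cyclic in these cases: it is a subgroup of $(\mathbb{Z}/3\mathbb{Z})^2$, and if it equalled $(\mathbb{Z}/3\mathbb{Z})^2$ then the non-degeneracy of the Weil pairing would force $\omega\in K$, i.e.\ $\sqrt{-3}\in K$, which is impossible since $K$ is a cubic field. Combining this with the previous paragraph gives $E(K)[3]\cong\mathbb{Z}/3\mathbb{Z}$ exactly under the two stated conditions and $E(K)[3]\cong\mathcal{O}$ otherwise. I do not anticipate any real obstacle: the argument is the division-polynomial computation of Lemma~\ref{Q3} together with the harmless remark that $\sqrt{-3}\notin K$. (One can even note that the two conditions are mutually exclusive over a cubic $K$, since $c$ and $-3c$ both being squares would again put $\sqrt{-3}$ in $K$, but the statement does not require this.)
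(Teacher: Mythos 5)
Your proposal is correct and follows essentially the same route as the paper, which simply computes the $3$-division polynomial $3x(x^3+4c)$ and splits into the cases $x=0$ (forcing $c$ a square) and $x^3=-4c$ (forcing $4c$ a cube and $-3c$ a square), exactly as in Lemma~\ref{Q3}. The only additions you make are the explicit justification that $E(K)[3]$ cannot be all of $(\mathbb{Z}/3\mathbb{Z})^2$ (via the Weil pairing and $\sqrt{-3}\notin K$) and the observation that the two conditions are mutually exclusive over a cubic field; the paper leaves these points implicit, so your write-up is a slightly more complete version of the same argument.
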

\begin{proof}
Proof is similar to the proof of Lemma \ref{Q3} and we omit the proof here.
\end{proof}
%\smallskip

\begin{lemma}
\label{K6}
Let $E$ be a Mordell curve over $K$. Then,  
$$
E(K)[6] \cong 
\begin{cases}
\mathbb{Z} / 6 \mathbb{Z}, & \text{ if } c \mbox{ is a square as well as cube in } K,\\\mathbb{Z} / 6 \mathbb{Z}, & \mbox{ if } -3c  \mbox{ is a square in }K \mbox{ and }c,4c \mbox{ are cubes in } K,
\\ \mathbb{Z} / 3 \mathbb{Z}, & \mbox{ if } c \mbox{ is a square but not a cube in }K,
\\ \mathbb{Z} / 3 \mathbb{Z}, & \mbox{ if } -3c \mbox{ is a square in }K \mbox{ and }4c \mbox{ is a cube in }K \mbox{ but }c \mbox{ is not a cube in }K,
\\\mathbb{Z} / 2 \mathbb{Z}, & \mbox{ if } c  \mbox{ is a cube but not a square in }K \mbox{ and }-3c \mbox{ is not a square in }K,\\ \mathbb{Z} / 2 \mathbb{Z}, & \mbox{ if } c \mbox{ is a cube but not a square in }K \mbox{ and }4c \mbox{ is not a cube in }K,\\ \mathcal{O}, & \text{ otherwise}.
\end{cases}
$$
\end{lemma}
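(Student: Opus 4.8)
The plan is to reduce everything to the already-established descriptions of $E(K)[2]$ and $E(K)[3]$. Since $\gcd(2,3)=1$, the Chinese Remainder Theorem gives a canonical decomposition
$$
E(K)[6] \cong E(K)[2] \oplus E(K)[3],
$$
so I would simply combine Lemma \ref{K2} and Lemma \ref{K3}. By Lemma \ref{K2}, $E(K)[2]$ equals $\mathbb{Z}/2\mathbb{Z}$ precisely when $c$ is a cube in $K$, and is trivial otherwise; by Lemma \ref{K3}, $E(K)[3]$ equals $\mathbb{Z}/3\mathbb{Z}$ precisely when $c$ is a square in $K$ or when $-3c$ is a square and $4c$ is a cube in $K$, and is trivial otherwise. (Recall that $E(K)[3]$ cannot be $(\mathbb{Z}/3\mathbb{Z})^2$, since a cubic field cannot contain $\mathbb{Q}(\omega)$; this is already incorporated into Lemma \ref{K3}.) Thus $E(K)[6]$ is one of $\mathbb{Z}/6\mathbb{Z}$, $\mathbb{Z}/3\mathbb{Z}$, $\mathbb{Z}/2\mathbb{Z}$, $\mathcal{O}$, according to whether both, only the $3$-part, only the $2$-part, or neither of $E(K)[2]$, $E(K)[3]$ is nontrivial.

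The remaining work is just to rewrite these four alternatives in the stated form. For $E(K)[6]\cong \mathbb{Z}/6\mathbb{Z}$ one needs $c$ a cube in $K$ together with $c$ a square, or with ($-3c$ a square and $4c$ a cube). The first possibility is the first line of the lemma; in the second, since $c$ and $4c$ are both cubes so is $4=(4c)/c$, and conversely, so this is exactly the second line. For $E(K)[6]\cong \mathbb{Z}/3\mathbb{Z}$ one needs $c$ not a cube while $c$ is a square, or ($-3c$ a square and $4c$ a cube). Here I would use that $c$ and $-3c$ cannot both be squares in $K$ --- otherwise $\sqrt{-3}\in K$ and $\mathbb{Q}(\omega)\subseteq K$, impossible for a cubic field --- so the conditions ``$c$ a square, not a cube'' and ``$-3c$ a square, $4c$ a cube, $c$ not a cube'' are disjoint and jointly exhaust this case, giving the third and fourth lines.

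For $E(K)[6]\cong \mathbb{Z}/2\mathbb{Z}$ one needs $c$ a cube, $c$ not a square, and the failure of ``$-3c$ a square and $4c$ a cube'''; negating that conjunction yields ``$-3c$ not a square'' or ``$4c$ not a cube'', which are the fifth and sixth lines. All other configurations leave both $E(K)[2]$ and $E(K)[3]$ trivial, giving $E(K)[6]=\mathcal{O}$. The only input beyond Lemmas \ref{K2} and \ref{K3} is the non-containment $\mathbb{Q}(\omega)\not\subseteq K$; the rest is the bookkeeping of merging the $2$- and $3$-parts, and this casework --- in particular checking that the apparent overlaps among the stated lines are harmless --- is the only place that needs a little care.
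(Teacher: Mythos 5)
Your proposal is correct and follows essentially the same route as the paper: both reduce the lemma to Lemmas \ref{K2} and \ref{K3} and then combine the $2$- and $3$-torsion case by case. Your explicit use of the splitting $E(K)[6]\cong E(K)[2]\oplus E(K)[3]$ and the observation that $c$ and $-3c$ cannot both be squares in a cubic field are just slightly cleaner statements of what the paper's four-case argument does implicitly.
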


\begin{proof}

\noindent{\bf Case 1.} ($c$ is a square as well as cube in $K$)

\smallskip

Since $c$ is a square in $K$, by Lemma \ref{K3}, there are two points of order $3$ in $E(K)[6]$. Also, since $c$ is a cube in $K$, by Lemma \ref{K2}, there is only one point of order $2$ in $E(K)[6]$. Hence,  it has an element of order $6$. Thus, we conclude that $E(K)[6] \cong \mathbb Z/6 \mathbb Z$.

\smallskip

\noindent{\bf Case 2.} ($c$ is a cube in $K$ but not a square in $K$)

\smallskip

Since $c$ is a cube in $K$, $E(K)[6]$ has  only one element of order $2$ by Lemma \ref{K2}. If $E(K)[6]$ has an element of order $3$, by Lemma \ref{K3}, we see that  $-3c$ is a square in $K$ and $4c$ is a cube in $K$. In this case, $E(K)[6] \cong \mathbb Z/6 \mathbb Z$. Thus, if $-3c$ is not a square in $K$ or $4c$ is not a cube in $K$, we conclude that $E(K)[6] \cong \mathbb Z/2 \mathbb Z$.

\smallskip

\noindent{\bf Case 3.} ($c$ is a square but not a cube in $K$)

\smallskip

Since $c$ is not a cube in $K$, by Lemma \ref{K2}, $E(K)[6]$ has no element of order $2$. Since $c$ is a square, by Lemma \ref{K3}, we conclude that $E(K)[6] \cong \mathbb Z/3 \mathbb Z$.

\smallskip

\noindent{\bf Case 4.} ($c$ is neither a square nor a cube in $K$)

Since $c$ is not a cube in $K$, by Lemma \ref{K2}, $E(K)[6]$ has no element of order $2$. If $E(K)[6]$ has an element of order $3$, by Lemma \ref{K3}, we see that  $-3c$ is a square in $K$ and $4c$ is a cube in $K$. Thus, we conclude that  $E(K)[6] \cong \mathbb Z/3 \mathbb Z$.
\end{proof}

\begin{lemma}\label{K4}
$T$ has no element of order $4$.
\end{lemma}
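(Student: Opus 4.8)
The plan is to follow the argument of Lemma \ref{Q4} almost verbatim; the only change is that $c$ now ranges over an arbitrary cubic field $K$ rather than over $\mathbb{Z}$, and I will explain why this causes no difficulty. Suppose for contradiction that $P=(x,y)\in T$ has order $4$. Then $2P$ is a point of order $2$, so $y(2P)=0$ while $2P\neq\mathcal{O}$ (in particular $y\neq 0$). The first step is to translate the condition $y(2P)=0$ into a polynomial condition on $x$: applying the duplication formula to $E\colon y^2=x^3+c$ (equivalently, evaluating the $4$-division polynomial, which for this curve factors as $4y\,(x^6+20cx^3-8c^2)$) shows that $y(2P)=0$ together with $y\neq 0$ forces
\[
x^6+20cx^3-8c^2=0 .
\]
Reading this as a quadratic in $x^3$ and solving, I get $x^3=-10c\pm 6c\sqrt{3}=(-10\pm 6\sqrt{3})\,c$.

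Next I would extract the field-theoretic contradiction. Since $E$ is an elliptic curve, its discriminant $-432c^2$ is nonzero, so $c\neq 0$; this is the only place where one must be a little careful, precisely because $c$ is no longer assumed to be a nonzero integer but just an element of $K^\times$. Given $c\neq 0$, from $x^3\in K$ and the relation $x^3=(-10\pm 6\sqrt{3})c$ we obtain $-10\pm 6\sqrt{3}=x^3/c\in K$, hence $\sqrt{3}\in K$. But then $\mathbb{Q}(\sqrt{3})\subseteq K$, forcing $2=[\mathbb{Q}(\sqrt{3}):\mathbb{Q}]$ to divide $[K:\mathbb{Q}]=3$, which is absurd. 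Therefore no point of order $4$ exists in $T$.

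There is essentially no hard step here: the computation of $y(2P)$ (or of the division polynomial) is the routine duplication-formula computation already carried out in the proof of Lemma \ref{Q4}, and the concluding observation is immediate from the fact that $[K:\mathbb{Q}]=3$ is coprime to $2$. The only subtlety worth flagging in the write-up is recording that $c\neq 0$, so that dividing $x^3$ by $c$ is legitimate; this holds automatically because the Weierstrass equation $y^2=x^3+c$ is required to be nonsingular.
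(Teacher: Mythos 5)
Your proposal is correct and follows essentially the same route as the paper, which simply cites the argument of Lemma \ref{Q4}: the $4$-division polynomial yields $x^6+20cx^3-8c^2=0$, hence $x^3=(-10\pm 6\sqrt{3})c$, and since $c\neq 0$ this forces $\sqrt{3}\in K$, contradicting $[K:\mathbb{Q}]=3$. Your added remark that $c\neq 0$ (by nonsingularity) is the right thing to check when passing from integer $c$ to $c\in K$, and it is the only point where the two settings differ.
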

\begin{proof}
Proof is similar to the proof of Lemma \ref{Q4} and we omit the proof here.
\end{proof}

\begin{lemma}\label{K9}
$T$ has an element of order $9$ if and only if $c$ is a square in $K$, $4c$ is a cube in $K$ and $K = \mathbb{Q}(r)$ with $r$ satisfying the relation $r^3 - 3r^2 + 1 = 0$.
\end{lemma}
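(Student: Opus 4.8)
\noindent\textit{Proof proposal.} The plan is to transcribe the argument of Lemma \ref{Q9} almost verbatim, the key observation being that the hypothesis ``$c$ is an integer'' entered that proof only at the very end, to pin down $c=16$ -- a conclusion that is absent from the present statement. So I would begin with a point $P=(x,y)\in T$ of order $9$. Then $3P$ is a point of order $3$, and the $3$-division polynomial $3x(x^3+4c)$ forces $x(3P)\bigl(x(3P)^3+4c\bigr)=0$, splitting the analysis into two branches exactly as before.

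In the branch $x(3P)^3+4c=0$, set $a=x(3P)$, which is nonzero (otherwise $c=0$). Expanding $x(3P)$ through the duplication and addition formulas and clearing denominators yields a degree-$9$ polynomial relation in $x$; substituting $x=at$ normalizes it to $t^9-9t^8+24t^6+18t^5+3t^3-9t^2-1=0$ with $t\in K$. Since this polynomial is irreducible over $\mathbb{Q}$ (as already verified in Lemma \ref{Q9}) and $[K:\mathbb{Q}]=3$, no such $t$ can exist, so this branch does not occur.

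In the branch $x(3P)=0$, we get $y(3P)^2=c$, so $c$ is a square in $K$; and the addition formula for $3P=P+2P$ with vanishing $x$-coordinate reduces to $(x^3+c)\bigl(x^9-96cx^6+48c^2x^3+64c^3\bigr)=0$. The factor $x^3+c$ vanishes only when $2P=\mathcal{O}$, which is incompatible with $P$ having order $9$, so the second factor vanishes; putting $4c=t$ this rearranges to $(x^3+t)^3=27tx^6$. As $x\neq 0$ (else $c=0$), this exhibits $4c=\bigl((x^3+t)/(3x^2)\bigr)^3$ as a cube in $K$, say $4c=v^3$; then $(x^3+v^3)^3=(3vx^2)^3$, and since a cubic field contains no primitive cube root of unity, the cube roots must agree in $K$, giving $x^3+v^3=3vx^2$. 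Setting $r=x/v$ yields $r^3-3r^2+1=0$, and because $X^3-3X^2+1$ is irreducible over $\mathbb{Q}$ while $r\in K$ with $[K:\mathbb{Q}]=3$, we conclude $K=\mathbb{Q}(r)$. This establishes the forward implication with the asserted conditions.

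For the converse, assume $c$ is a square in $K$, $4c=v^3$ with $v\in K$, and $K=\mathbb{Q}(r)$ with $r^3-3r^2+1=0$. Using $r^3=3r^2-1$ one checks directly that $\gamma:=2r^2-2r-1\in K$ satisfies $\gamma^2=4r^3+1$, so $P_0:=(vr,\sqrt{c}\,\gamma)$ is a $K$-rational point of $E$, since $(\sqrt{c}\,\gamma)^2=c(4r^3+1)=(vr)^3+c$. A short computation with the duplication and addition formulas then gives $x(3P_0)=0$, so $3P_0$ is a nontrivial $3$-torsion point and hence $P_0$ has order exactly $9$. The one place that is not mere copying is precisely this point: one must certify that the natural order-$9$ point is genuinely $K$-rational, i.e.\ that $4r^3+1$ is a square in $\mathbb{Q}(r)$; in Lemma \ref{Q9} this was folded into the identification $\mathbb{Q}(r)=\mathbb{Q}(\gamma)$, whereas here it is settled cleanly by the explicit square root $2r^2-2r-1$ (equivalently, by the factorization $\gamma^6-99\gamma^4+243\gamma^2-81=(\gamma^3-9\gamma^2-9\gamma+9)(\gamma^3+9\gamma^2-9\gamma-9)$ together with a degree count). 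I expect this to be the only genuine obstacle; everything else is the cubic-field argument run without any integrality hypothesis on $c$.
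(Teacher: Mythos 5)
Your proposal is correct and follows essentially the same route as the paper, which proves this lemma simply by referring back to the argument of Lemma \ref{Q9} (the integrality of $c$ there was used only to pin down $c=16$, and that step indeed drops out here). Your explicit square root $\gamma=2r^2-2r-1$ of $4r^3+1$ in $\mathbb{Q}(r)$ checks out ($\gamma^2=12r^2-3=4r^3+1$ modulo $r^3-3r^2+1$) and cleanly certifies the $K$-rationality of the order-$9$ point in the converse direction.
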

\begin{proof} 
Proof is similar to the proof of Lemma \ref{Q9}.
\end{proof}

\smallskip

\begin{lemma}\label{K27}
$T$ has no element of order $27$.
\end{lemma}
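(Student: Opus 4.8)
The plan is to reduce the statement to the single curve and field already handled in the proof of Lemma~\ref{Q18}. Suppose, towards a contradiction, that $P \in T$ has order $27$. Then $3P \in T$ has order $9$, so Lemma~\ref{K9} applies and tells us that $c$ is a square in $K$, that $4c$ is a cube in $K$, and that $K = \mathbb{Q}(r)$ with $r$ satisfying $r^3 - 3r^2 + 1 = 0$.

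The next step is to observe that these two conditions on $c$ force $E$ to be isomorphic over $K$ to the rational Mordell curve $E_0 : y^2 = x^3 + 16$. Write $c = a^2$ and $4c = b^3$ with $a, b \in K^*$ (legitimate since $c \neq 0$ for an elliptic curve), and put $w = 2a/b$. Then $w^2 = 4a^2/b^2 = b^3/b^2 = b$, hence $4c = b^3 = w^6$, so $c = w^6/4$ and therefore $c/16 = (w/2)^6 \in (K^*)^6$. Since the substitution $(x,y) \mapsto (u^2 x, u^3 y)$ gives a $K$-isomorphism from $y^2 = x^3 + c$ to $y^2 = x^3 + u^6 c$, taking $u = 2/w$ (so that $u^6 c = 16$) yields a $K$-isomorphism $E \cong E_0$. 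In particular $T = E(K)_{tors} \cong E_0(K)_{tors}$.

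Finally, for this specific field $K = \mathbb{Q}(r)$ with $r^3 - 3r^2 + 1 = 0$, the \emph{magma} computation already recorded in the proof of Lemma~\ref{Q18} shows $E_0(K)_{tors} \cong \mathbb{Z}/9\mathbb{Z}$, which has no element of order $27$. This contradicts the existence of $P$, and hence $T$ has no element of order $27$. The argument is short and there is no real obstacle; the only point requiring care is the scaling reduction in the middle paragraph, where one must check that ``$c$ a square and $4c$ a cube in $K$'' yields $c/16 \in (K^*)^6$ \emph{exactly} — not merely that $E$ becomes a sextic twist of $E_0$ — since this is precisely what allows the $c = 16$ torsion computation of Lemma~\ref{Q18} to be quoted verbatim.
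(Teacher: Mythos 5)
Your proof is correct, but it takes a genuinely different route from the paper's. The paper argues by reduction modulo a prime $p\equiv 2\pmod{27}$: choosing a prime ideal $\mathcal{P}_j$ above $p$ of residue degree $1$ or $3$, Lemmas \ref{dey1} and \ref{dey2} give $|\overline{E}(\mathcal{O}_K/\mathcal{P}_j)|\in\{p+1,\,p^3+1\}$, and since $p+1\equiv 3$ and $p^3+1\equiv 9\pmod{27}$, the injectivity of the reduction map (Proposition \ref{reduction}) rules out $27\mid |T|$. You instead invoke Lemma \ref{K9} to pin down the field and the congruence class of $c$ modulo sixth powers, and your normalization step is sound: from $c=a^2$ and $4c=b^3$ one indeed gets $w=2a/b$ with $w^2=b$, hence $c=(w/2)^6\cdot 16$, so $E$ is $K$-isomorphic (not merely a twist) to $y^2=x^3+16$ over $K=\mathbb{Q}(r)$, and the \emph{magma} computation quoted in Lemma \ref{Q18} finishes the job. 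The trade-off: the paper's argument is independent of both Lemma \ref{K9} (whose proof is only sketched as ``similar to Lemma \ref{Q9}'') and the computer verification in Lemma \ref{Q18}, and so is more self-contained; your argument is shorter given those ingredients and has the merit of making explicit that the apparently general case $c\in K$ collapses to the single rational curve $c=16$ over the cyclic cubic field $\mathbb{Q}(r)$. There is no circularity, since Lemmas \ref{Q9}, \ref{Q18} and \ref{K9} all precede Lemma \ref{K27} in the paper's logical order.
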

\begin{proof}

We assume that there exists an element of order $27$ in $T$. Then $27$ divides $|T|$. Since $\gcd (2, 27)=1$, by Dirichlet's theorem on primes in arithmetic progression, there exist infinitely many primes $ p \equiv 2 \pmod {27}$. Therefore there is a prime $p\equiv 2 \pmod{ 27}$ such that $E$ has good reduction at a prime ideal lying above $p$. Let $p \mathcal{O}_K = {\mathcal{P}}_{1}^{e_1} {\mathcal{P}}_{2}^{e_2}{\mathcal{P}}_{3}^{e_3}$ be the ideal decomposition in $\mathcal{O}_K$ where $\mathcal{P}_1,\mathcal{P}_2,\mathcal{P}_3$ are prime ideals in $\mathcal{O}_K$ lying above $p$ and $0 \leq e_i \leq 1$. If $f_i$'s are then residual degree of $\mathcal{P}_i$ for $ i=1,2, 3$, then we know that $e_1f_1+e_2f_2 +e_3f_3 = 3$. Then there exists a prime ideal $\mathcal{P}_j$ such that $f_j =1 $ or $3$ for some $ j=1,2,3$.

\smallskip
 Since $f_j \in \{ 1,3\}$, we know $|\overline{E}(\mathcal{O}_K/\mathcal{P}_j)| \in \{ p+1, p^3+1\}$ by the Lemmas \ref{dey1} and \ref{dey2}. 
Again since the reduction map $\phi: T \longrightarrow \bar{E}(\mathcal{O}_{K}/\mathcal{P}_j)$ is injective except finitely many primes, we conclude  that $ |T|$ divides $|\bar{E}(\mathcal{O}_K/\mathcal{P}_j)|$ and hence $27$ divides $|\bar{E}(\mathcal{O}_K/\mathcal{P}_j)|$. This is impossible as $p \equiv 2 \pmod{27}$. Hence there is no element of order $27$ in $T$.
\end{proof}

\smallskip

\begin{lemma}\label{K18}
$T$ has no element of order $18$.
\end{lemma}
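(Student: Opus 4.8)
The plan is to reduce the existence of a point of order $18$ to the simultaneous existence of points of order $2$ and order $9$, and then to show that the arithmetic constraints these two facts impose on $c$ and on $K$ cannot hold together.

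First I would record the elementary group-theoretic observation that, since $\gcd(2,9)=1$, an abelian group contains an element of order $18$ if and only if it contains both an element of order $2$ and an element of order $9$: if $a$ has order $2$ and $b$ has order $9$ then $\langle a\rangle\cap\langle b\rangle=\{\mathcal{O}\}$, so $\langle a,b\rangle\cong\mathbb{Z}/2\mathbb{Z}\oplus\mathbb{Z}/9\mathbb{Z}$ and $a+b$ has order $18$; conversely $9P$ and $2P$ supply the two elements. So, arguing by contradiction, assume $T$ has an element of order $18$; then $T$ has an element of order $2$ and an element of order $9$.

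Next I would feed this into the structural lemmas already proved. By Lemma~\ref{K2}, the presence of a $2$-torsion point forces $c$ to be a cube in $K$. By Lemma~\ref{K9}, the presence of a point of order $9$ forces $c$ to be a square in $K$, $4c$ to be a cube in $K$, and $K=\mathbb{Q}(r)$ with $r^3-3r^2+1=0$. Since $c$ and $4c$ are both cubes in $K$, $4=(4c)/c$ is a cube in $K$; writing $4=\gamma^3$ with $\gamma\in K$ gives $2=8/4=(2/\gamma)^3$, so $2$ is a cube in $K$. The polynomial $r^3-3r^2+1$ has three real roots (e.g. $f(-1)<0<f(0)$, $f(1)<0<f(3)$), so $K=\mathbb{Q}(r)$ is totally real; hence the only cube root of $2$ that can lie in $K$ is the real number $\sqrt[3]{2}$, so $\mathbb{Q}(\sqrt[3]{2})\subseteq K$, and comparing degrees $K=\mathbb{Q}(\sqrt[3]{2})$.

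Finally I would derive the contradiction from normality: $K=\mathbb{Q}(r)$ is Galois over $\mathbb{Q}$ — it is the splitting field of $r^3-3r^2+1$, whose discriminant equals $81$, a perfect square, so its Galois group is cyclic of order $3$ — whereas $\mathbb{Q}(\sqrt[3]{2})$ is not normal over $\mathbb{Q}$, as it does not contain $\omega\sqrt[3]{2}$. This contradiction shows that $T$ has no element of order $18$. The only step needing genuine care is the middle one: one must apply Lemma~\ref{K9} in precisely the stated form (it already pins down $K$) and must push "$4$ is a cube" on to "$2$ is a cube" rather than stopping at $4$; the rest is bookkeeping. It is worth noting that the reduction-modulo-$\mathcal{P}$ technique used for order $27$ in Lemma~\ref{K27} does not apply here, because $p\equiv 2\pmod 3$ forces $9\mid p^3+1$, so $18$ can divide $|\overline{E}(\mathcal{O}_K/\mathcal{P})|$; this is exactly why the algebraic argument above is needed.
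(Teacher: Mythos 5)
Your proof is correct, but it takes a genuinely different route from the paper's. The paper also begins by extracting a point of order $2$ (so $c=a^3$) and a point of order $9$, but it then goes back into the explicit computation from Lemma~\ref{Q9}: the $x$-coordinate of the order-$9$ point satisfies $x^9-96cx^6+48c^2x^3+64c^3=0$, which after the substitution $t=x/a\in K$ becomes $t^9-96t^6+48t^3+64=0$; a \emph{Magma} check that this degree-$9$ polynomial is irreducible over $\mathbb{Q}$ then contradicts $[K:\mathbb{Q}]=3$. You instead treat Lemma~\ref{K9} as a black box (order $9$ forces $c$ square, $4c$ cube, and $K=\mathbb{Q}(r)$ with $r^3-3r^2+1=0$), combine it with ``$c$ is a cube'' from Lemma~\ref{K2} to conclude that $2$ is a cube in $K$, and then derive the contradiction purely field-theoretically: $\mathbb{Q}(r)$ is a totally real cyclic cubic field (discriminant $81$), so it would have to equal $\mathbb{Q}(\sqrt[3]{2})$, which is not normal. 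Your argument buys independence from the computer-algebra irreducibility check and reuses an already-proved lemma, at the cost of leaning on the full strength of Lemma~\ref{K9} (in particular the identification of $K$ as $\mathbb{Q}(r)$), whereas the paper's version only needs the intermediate polynomial identity from that lemma's proof. Your closing remark about why the reduction-mod-$\mathcal{P}$ method of Lemma~\ref{K27} cannot be recycled here is also accurate: for $p\equiv 2\pmod 9$ one has $18\mid p^3+1$, so inert primes give no obstruction.
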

\begin{proof}
Assume that $T$ has a point of order $18$. Then it has a point of order $2$, which forces $c = a^3$ for some $a \in K$. Since $T$ has a point of order $9$, say $P = (x,y)$, then we have
$$x^{12} -95a^3x^9-48a^6x^6+112a^9x^3+64a^{12}=0.,$$
from the proof of Lemma \ref{Q9}.

\smallskip

After substituting $t=\frac{x}{a} \in K$, the above equation reduces to $$t^{12} -95t^9-48t^6 +112t^3 +64=0\\ \Rightarrow (t^3+1)(t^9-96t^6+48t^3+64)=0.$$ 

\smallskip
Since $(t^3+1) \neq 0$, we have $(t^9-96t^6+48t^3+64)=0$. Now consider the polynomial $f(t) = (t^9-96t^6+48t^3+64)$ and using {\it magma},
we get that $f(t)$ is an irreducible polynomial in $\mathbb{Z}[t]$. Hence $[\mathbb{Q}(t) : \mathbb{Q}] = 9$, which is a contradiction as $t \in K$ and $[K : \mathbb Q] = 3$.
Hence there is no point of order $18$ in $T$.
\end{proof}

 \smallskip
 
 \noindent{\it Proof of Theorem \ref{cubicK}}. Combining Lemmas \ref{Kq}, \ref{K6}, \ref{K4}, \ref{K9}, \ref{K27} and \ref{K18},  we have the desired result. \qed
 
\bigskip

 \section{Proof of Theorem \ref{sexticQ}}
 
 Throughout this section $K$ stands for a sextic field. We denote a rational Mordell curve of the form $y^2 = x^3 + c$ for some integer $c$, simply by $E$. We also denote $T$ as the torsion subgroup of $E(K)$.
 \begin{lemma}\label{rq} 
Let $q > 3$ be any prime. Then there does not exist any element of order $q$  in $T$.
\end{lemma}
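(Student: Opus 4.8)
The plan is to run the reduction-modulo-a-prime argument of Lemmas \ref{Qq} and \ref{Kq}, but with the combinatorics of prime splitting adapted to degree $6$. Assume for contradiction that $T$ has an element of order $q$ for some prime $q > 3$, so that $q \mid |T|$. Since $\gcd(2,3q)=1$, Dirichlet's theorem on primes in arithmetic progressions produces infinitely many primes $p \equiv 2 \pmod{3q}$; I would fix one such $p$ that is odd, at which $E$ has good reduction, and at which the reduction map in Proposition \ref{reduction} is injective --- all but finitely many $p$ satisfy these conditions. In particular $p \equiv 2 \pmod 3$ and $p \equiv 2 \pmod q$.

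Next, write $p\mathcal{O}_K = \mathcal{P}_1^{e_1}\cdots \mathcal{P}_r^{e_r}$ with residue degrees $f_i = [\mathcal{O}_K/\mathcal{P}_i : \mathbb{F}_p]$, so that $\sum_{i=1}^r e_i f_i = 6$. The crucial point is that some $f_j$ must lie in $\{1,2,3,6\}$: if every $f_i$ were at least $4$, then two or more primes would force $\sum_i e_i f_i \geq 8 > 6$, while a single prime would require $f_1 \mid 6$ with $f_1 \in \{4,5\}$, which is impossible; hence not every $f_i$ can avoid $\{1,2,3,6\}$. Fix such a prime $\mathcal{P}_j$ and reduce modulo $\mathcal{P}_j$. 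Since $p \equiv 2 \pmod 3$, Lemmas \ref{dey1} and \ref{dey2} give
$$
|\overline{E}(\mathcal{O}_K/\mathcal{P}_j)| =
\begin{cases}
p^{f_j}+1, & f_j \in \{1,3\},\\
(p^{f_j/2}+1)^2, & f_j \in \{2,6\},
\end{cases}
$$
using that $2 \equiv 6 \equiv 2 \pmod 4$.

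By Proposition \ref{reduction} the reduction map $T \hookrightarrow \overline{E}(\mathcal{O}_K/\mathcal{P}_j)$ is injective, so $q \mid |\overline{E}(\mathcal{O}_K/\mathcal{P}_j)|$. Now reduce modulo $q$ using $p \equiv 2 \pmod q$: if $f_j = 1$ then $q \mid p+1 \equiv 3 \pmod q$; if $f_j = 2$ then $q \mid (p+1)^2$, hence $q \mid p+1 \equiv 3 \pmod q$; if $f_j = 3$ then $q \mid p^3+1 \equiv 9 \pmod q$; and if $f_j = 6$ then $q \mid (p^3+1)^2$, hence $q \mid p^3+1 \equiv 9 \pmod q$. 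In every case $q \mid 3$ or $q \mid 9$, contradicting $q > 3$. The one genuinely new ingredient compared with the cubic case is the combinatorial observation that a prime of residue degree in $\{1,2,3,6\}$ always exists; this is what lets us sidestep residue degrees $4$ and $5$, for which Lemma \ref{dey1} supplies no point count (respectively supplies $p^5+1 \equiv 33 \pmod q$, which is not coprime to $q$ when $q = 11$), and I expect this to be the only step requiring care.
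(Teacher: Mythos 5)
Your proof is correct and follows essentially the same route as the paper: choose $p \equiv 2 \pmod{3q}$ by Dirichlet, find a prime above $p$ of residue degree in $\{1,2,3,6\}$, apply Lemma \ref{dey1} and the injectivity of reduction to force $q \mid p^{f_j}+1$ with $f_j$ odd or $q \mid (p^{f_j/2}+1)^2$, and derive $q \mid 3$ or $q \mid 9$. The only (welcome) difference is that you spell out the combinatorial step guaranteeing a residue degree in $\{1,2,3,6\}$, which the paper merely asserts.
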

\begin{proof}
Suppose there exists an element of order $q$ in $T$. Then $q$ divides $|T|$. Sine $\gcd (2, 3q)=1$, by Dirichlet's theorem on primes in arithmetic progressions, there exist infinitely many primes $ p \equiv 2 \pmod {3q}$. Therefore there is a prime $p\equiv 2 \pmod{ 3q}$ such that $E$ has good reduction at $p$. We consider such a prime and assume that $p \mathcal{O}_K = {\mathcal{P}}_{1}^{e_1} \cdots {\mathcal{P}}_{6}^{e_6}$ is the ideal decomposition in $\mathcal{O}_K$ where $\mathcal{P}_1,\ldots,\mathcal{P}_6$ are prime ideals in $\mathcal{O}_K$ lying above $p$ and $0 \leq e_i \leq 1$. If $f_i$'s are then residual degree of $\mathcal{P}_i$ for $ i=1,\ldots,6$, then we know that $\sum_{i = 1}^{6} e_{i}f_{i} = 6$. Therefore there exists a prime ideal $\mathcal{P}_j$ such that $f_j$ is either $1 $  or $2$ or $3$ or $6$ for some $ j=1,2,\ldots, 6$. 

\smallskip

Now, we consider the reduction mod $\mathcal{P}_j$ map. Now  we have $|\mathcal{O}_K/\mathcal{P}_j| = p^{f_j}$, where $f_j = 1,2,3$ or $6$. Since $p \equiv 2 \pmod{3}$, we get $|\overline{E}(\mathcal{O}_K/\mathcal{P}_j)| \mid (p^3 + 1)^2$ by Lemma \ref{dey1}. Then,  we  conclude that $q \mid (p^3 + 1)$ by Proposition \ref{reduction}. But we also have $p \equiv 2 \pmod{q}$, which implies $p^3 + 1 \equiv 9 \pmod{q}$. It is a contradiction as $q > 3$. Hence there does not exist any point of order $q > 3$.
\end{proof}

   \begin{lemma}\label{r2}
 Let $E$ be a rational Mordell curve. Then
$$E(K)[2] \cong \left\{\begin{array}{ll}
\mathbb{Z}/2\mathbb{Z}\oplus \mathbb{Z}/2\mathbb{Z}, & \text{ if }c \mbox{ is a cube in }K \mbox{ and } \omega \in K\\
        \mathbb{Z}/2\mathbb{Z}, & \text{ if }c \mbox{ is a cube in }K \mbox{ and } \omega \notin K\\
        
         \mathcal{O}, & \text{ otherwise,}
        \end{array}\right.$$
        where $\omega$ is a cube root of unity.
\end{lemma}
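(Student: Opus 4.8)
The plan is to read off $E(K)[2]$ directly from the roots of $x^3+c$ lying in $K$, mirroring the proof of Lemma \ref{Q2} but now allowing for the possibility $\omega\in K$, which a sextic field may well satisfy. First I would note that a nontrivial element of $E(K)[2]$ is exactly a point $(x,0)$ with $x\in K$ a root of $x^3+c=0$. If $c$ is not a cube in $K$ there is no such $x$, so $E(K)[2]=\{\mathcal O\}$, which is the last case of the statement.

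Next I would treat the case $c=a^3$ with $a\in K$ (and $a\neq 0$, since $c\neq 0$ for a Mordell curve). Factoring, $x^3+c=(x+a)(x^2-ax+a^2)$, so $-a\in K$ always gives one point of order $2$, while the two remaining roots $-a\omega$ and $-a\omega^2$ lie in $K$ if and only if the quadratic $x^2-ax+a^2$ splits over $K$. Its discriminant is $-3a^2$, so it splits precisely when $-3$ is a square in $K$, i.e. when $\sqrt{-3}\in K$, i.e. when $\omega\in K$ (since $\mathbb Q(\omega)=\mathbb Q(\sqrt{-3})$). I would also record that this quadratic cannot have exactly one root in $K$: its two roots sum to $a\in K$, so one lies in $K$ exactly when both do. Hence $x^3+c$ has either exactly one root or exactly three roots in $K$, according as $\omega\notin K$ or $\omega\in K$.

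Finally I would conclude: when $c$ is a cube in $K$ and $\omega\notin K$ there is a unique point of order $2$, so $E(K)[2]\cong\mathbb Z/2\mathbb Z$; when $c$ is a cube in $K$ and $\omega\in K$ all three nontrivial elements of $E[2]$ are $K$-rational, and since $E[2]\cong(\mathbb Z/2\mathbb Z)^2$ over $\overline K$ this forces $E(K)[2]\cong\mathbb Z/2\mathbb Z\oplus\mathbb Z/2\mathbb Z$. I do not expect any genuine obstacle in this argument; the only step needing a little care is the ``one-or-three roots'' dichotomy, which is handled by the sum-of-roots observation above, together with the elementary fact that $\mathbb Q(\omega)=\mathbb Q(\sqrt{-3})$ so that ``$-3a^2$ a square in $K$'' is equivalent to ``$\omega\in K$''.
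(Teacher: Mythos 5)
Your proof is correct and follows essentially the same route as the paper, which simply asserts that the non-rational roots of $x^3+c$ lie in $K$ exactly when $\omega\in K$; you supply the justification (factoring off $x+a$ and computing the discriminant $-3a^2$ of the quadratic factor) that the paper leaves implicit. No issues.
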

\begin{proof}
If $ P=(x,y)$ is a point of order $2$, then $y=0$ and $x$ satisfies the polynomial equation $x^3+c=0$. Hence $c$ has to be a cube in $K$.

If $\omega \in K$, then $E(K)[2] \cong \mathbb{Z} / 2 \mathbb{Z} \times \mathbb{Z} / 2 \mathbb{Z}$. If $\omega \notin K$, then $E(K)[2] \cong \mathbb{Z} / 2 \mathbb{Z}$.
\end{proof}

\smallskip

\begin{lemma}\label{r3}
Let $E$ be a rational Mordell curve. Then
$$E(K)[3] \cong  \left\{\begin{array}{ll}
\mathbb{Z}/3\mathbb{Z}\oplus\mathbb{Z}/3\mathbb{Z}, & \text{ if }4c \mbox{ is a cube in }K \mbox { and } c,\omega \text{ are squares in } K,\\
\mathbb{Z}/3\mathbb{Z}, & \text{ if }4c \mbox{ is a cube in }K\mbox{ and }-3c \text{ is a square but }c \mbox{ is not square in }K,\\
\mathbb{Z}/3\mathbb{Z}, & \text{ if }c \mbox{ is a square in }K\mbox{ and }4c \text{ is not cube or }\omega \mbox{ is not square in }K,\\
\mathcal{O} & \text{ otherwise.} \
\end{array}\right.$$
\end{lemma}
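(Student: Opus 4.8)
The plan is to pin down $E(K)[3]$ by determining, via the $3$-division polynomial, exactly which $x$-coordinates of $3$-torsion points lie in $K$, and then for how many of them the corresponding $y$-coordinate also lies in $K$; since $E(K)[3]$ is a subgroup of $E(\overline{K})[3] \cong \mathbb{Z}/3\mathbb{Z} \oplus \mathbb{Z}/3\mathbb{Z}$, its order can only be $1$, $3$, or $9$, and each $x$-coordinate supporting a rational point accounts for a $\pm$-pair of nonzero elements. I would first record two elementary facts: $\omega$ is a square in $K$ if and only if $\omega \in K$, since $-\omega^{2}$ is a square root of $\omega$; and $\omega \in K$ if and only if $-3$ is a square in $K$, since $\mathbb{Q}(\omega) = \mathbb{Q}(\sqrt{-3})$.

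As in the proof of Lemma~\ref{Q3}, a point $P = (x,y)$ of order $3$ has $x$ satisfying $x(x^{3} + 4c) = 0$. The branch $x = 0$ gives the points $(0, \pm\sqrt{c})$, hence a subgroup isomorphic to $\mathbb{Z}/3\mathbb{Z}$, exactly when $c$ is a square in $K$, and nothing otherwise. For the branch $x \neq 0$ we need $x^{3} = -4c$, which has a root in $K$ only if $4c$ is a cube in $K$, say $4c = v^{3}$ with $v \in K^{*}$ (note $c \neq 0$ as $E$ is elliptic); the candidate values of $x$ are then $-v$, $-v\omega$, $-v\omega^{2}$, of which $-v$ always lies in $K$ while the other two lie in $K$ precisely when $\omega \in K$. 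For every such $x$ one computes $y^{2} = x^{3} + c = -3c$, so a point with that $x$-coordinate exists in $K$ if and only if $-3c$ is a square in $K$.

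It then remains to assemble these facts. If $4c$ is a cube and $c$ is a square in $K$ with $\omega \in K$, then $-3c$ is automatically a square in $K$ (as $-3$ is), so $x = 0$ together with the three roots of $x^{3} = -4c$ furnish four distinct $x$-coordinates each supporting a point, forcing $E(K)[3] \cong \mathbb{Z}/3\mathbb{Z} \oplus \mathbb{Z}/3\mathbb{Z}$ (this also follows from the Weil pairing, which forces $\omega \in K$ whenever the full $3$-torsion is rational). If $4c$ is a cube and $-3c$ is a square in $K$ but $c$ is not a square, then $\omega \notin K$, since otherwise $c = (-3c)(-3)^{-1}$ would be a square; hence only $x = -v$ contributes and $E(K)[3] \cong \mathbb{Z}/3\mathbb{Z}$. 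If $c$ is a square in $K$ but either $4c$ is not a cube or $\omega \notin K$, then the branch $x \neq 0$ contributes nothing usable (in the second case because $c$ being a square with $\omega \notin K$ forces $-3c$ to be a non-square), leaving $E(K)[3] \cong \mathbb{Z}/3\mathbb{Z}$ from $x = 0$; and in every remaining case $c$ is not a square and $4c$ fails to be a cube or $-3c$ fails to be a square, so neither branch produces a point and $E(K)[3] = \mathcal{O}$.

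The step I expect to be the real content is checking that this case list is exhaustive and mutually exclusive — in particular ruling out the a priori conceivable outcome of one copy of $\mathbb{Z}/3\mathbb{Z}$ from each branch (which would yield a group of order $5$); this is exactly where the implication ``$c$ and $-3c$ both squares $\Rightarrow \omega \in K$'' together with the constraint $\lvert E(K)[3]\rvert \in \{1,3,9\}$ does the real work. The division-polynomial computation and the identification of the isomorphism type of the resulting subgroup of $\mathbb{Z}/3\mathbb{Z} \oplus \mathbb{Z}/3\mathbb{Z}$ are routine.
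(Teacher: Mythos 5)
Your proposal is correct and follows essentially the same route as the paper: the $3$-division polynomial splits into the branches $x=0$ (giving $(0,\pm\sqrt{c})$ when $c$ is a square) and $x^{3}=-4c$ (giving points with $y^{2}=-3c$, with all three cube roots available exactly when $\omega\in K$), followed by the same case assembly. Your added observations — that $\omega$ is a square in $K$ iff $\omega\in K$ iff $-3$ is a square, and the resulting checks that the four cases are exhaustive and mutually exclusive (e.g.\ $c$ and $-3c$ both squares forces $\omega\in K$) — are left implicit in the paper's proof, so your write-up is if anything slightly more complete.
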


\begin{proof}
If $P= (x,y)$ be a point of order $3$ in $T$, then $$x(x^3+4c)=0.
$$

If $x=0$, then $y^2=c$ and hence $c$ is a square in $K$. Therefore, $(0, \pm \sqrt{c})$ are the only possible points of order $3$ in $T$.

\smallskip

If $x \neq 0$, then we have $x^3+4c=0$ and hence $y^2 = -3c$. Therefore $(-(4c)^{\frac{1}{3}}, \pm \sqrt{-3c})$, $(-(4c)^{\frac{1}{3}}\omega, \pm \sqrt{-3c})$ or $(-(4c)^{\frac{1}{3}}\omega^2, \pm \sqrt{-3c})$ are the possible points of order $3$ in $T$.

Hence, if $4c$ is a cube and $c,\omega$ are  squares in $K$, then $E(K)[3] \cong \mathbb{Z} / 3 \mathbb{Z} \times \mathbb{Z} / 3 \mathbb{Z}$. If $4c$ is a cube and $-3c$ is a  square in $K$ but $c$ is not a square in $K$, then $E(K)[3] \cong \mathbb{Z} / 3 \mathbb{Z}$. Also if $c$ is a square in $K$ and $4c$ is not a cube or $\omega$ is not a square in $K$, then $E(K)[3] \cong \mathbb{Z} / 3 \mathbb{Z}$. Combining all cases we have the desired result.
\end{proof}

\smallskip

\begin{lemma}\label{r4}
$T$ does not have any element of order $4$.
\end{lemma}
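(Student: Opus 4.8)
The statement asserts that for a rational Mordell curve $E: y^2 = x^3+c$ and a sextic field $K$, the torsion subgroup $T = E(K)_{tors}$ contains no element of order $4$. The plan is to mimic the argument of Lemma \ref{Q4}, but now over a degree-$6$ field. Suppose $P = (x,y) \in T$ has order $4$. Then $2P$ has order $2$, so $y(2P) = 0$. Using the duplication formula for $E$, the condition $y(2P)=0$ is equivalent to
$$
x^6 + 20 c x^3 - 8 c^2 = 0,
$$
which, viewing $x^3$ as the unknown, gives $x^3 = -10c \pm 6c\sqrt{3}$. Hence $\sqrt{3} \in K$ as soon as $c \neq 0$ (which holds since $E$ is an elliptic curve).

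So the crux is: having $\sqrt{3} \in K$ does \emph{not} immediately contradict $[K:\mathbb{Q}]=6$, unlike in the cubic case. I would therefore push one step further. From $x^3 = -10c + 6c\sqrt3 = 2c(-5+3\sqrt3)$ (WLOG the $+$ sign), we also have $x \in K$, so $x^3 \in K$ is a cube in $K$; combined with $x$ being a $2$-torsion-related coordinate and $y \in K$ one obtains further constraints. The cleanest route, though, is to use the finite-field reduction machinery already developed: if $4 \mid |T|$, pick a prime $p \equiv 2 \pmod{3}$ (with $p$ also chosen so $\left(\tfrac{3}{p}\right) = -1$, e.g. $p \equiv 5 \pmod{12}$, which is compatible with $p \equiv 2 \pmod 3$) of good reduction, and a prime $\mathcal{P} \mid p$ of residue degree $f_j \in \{1,2,3,6\}$. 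Since $\sqrt3 \in K$ would force $f_j$ even at such $p$ — because $3$ is a nonsquare mod $p$ — actually the simplest contradiction is the Weil-bound one: by Lemmas \ref{dey1} and \ref{dey2}, $|\overline{E}(\mathbb{F}_{p^{f_j}})|$ equals $p^{f_j}+1$ for $f_j$ odd and $(p^{f_j/2}+1)^2$ for $f_j \equiv 2 \pmod 4$, while for $f_j = 6$ one gets $(p^3+1)^2$; in all cases $4 \mid |\overline E(\mathbb{F}_{p^{f_j}})|$ forces $p^{f_j/\gcd} + 1 \equiv 0 \pmod 2$ which is automatic, so this does not kill order $4$ by itself. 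Thus the reduction trick alone is insufficient and one genuinely needs the $\sqrt3$ argument refined.

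Accordingly, the approach I would actually commit to is: from $x^3 = 2c(\pm3\sqrt3 - 5) \in K$ and $\sqrt3 \in K$, set $L = \mathbb{Q}(\sqrt3) \subseteq K$, so $[K:L] = 3$. Now $x^3 \in L$, and I would argue that $x \in L$ as well: if $x \notin L$ then $L(x)/L$ is a nontrivial subextension of $K/L$ of degree $3$ obtained by adjoining a cube root, hence $L(x) = L(x^3{}^{1/3})$ is non-Galois over $L$ unless $\omega \in L$ — but $\omega \notin L = \mathbb{Q}(\sqrt3)$ — so this is consistent only if $x^3$ is already a cube in $L$, whence $x \in L$ in all cases. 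Then $P = (x,y)$ with $x \in L$ and $y^2 = x^3 + c \in L$ shows $P$ is defined over the quadratic field $L$ or its quadratic extension $L(y)$, of degree at most $4$ over $\mathbb{Q}$; and since $P$ has order $4$, $E$ acquires a point of order $4$ over a field of degree $\le 4$. I would then invoke that $E$ has a point of order $2$ over $L$ (the $x$-coordinate satisfies $x^3 + c = 0$ argument is for $2$-torsion; here I use $2P$) together with the known classification — more concretely, I would finish by a direct computation: substituting $x = 2^{1/3}c^{1/3} \cdot (\text{unit of } \mathbb{Z}[\sqrt3])^{1/3}$ back into $y^2 = x^3 + c$ and checking, via $c$ being sixth-power-free, that $y \notin K$.

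The main obstacle is precisely that $\sqrt3 \in K$ is allowed for sextic $K$, so the slick cubic-field contradiction evaporates; the real work is showing that the remaining algebraic constraints ($x^3 = 2c(3\sqrt3-5)$ being simultaneously a cube and compatible with $y^2 = x^3+c$ having a solution in $K$, for sixth-power-free $c$) cannot all hold. I expect the cleanest writeup routes this through a \emph{magma} irreducibility check of the degree-$12$ (or factored) polynomial in $x$ obtained by clearing the $\sqrt3$, exactly as in Lemmas \ref{Q9} and \ref{K18}: eliminate $\sqrt{3}$ to get $(x^6+20cx^3-8c^2)^2 = 108 c^2 x^6$, i.e. $x^{12} + 40cx^9 + (400 - 108)c^2 x^6 - 320 c^3 x^3 + 64 c^4 = 0$, substitute $x = c^{1/3}t$ (legitimate once one shows $c$ must be a cube in $K$, which follows from $x^3 \in \mathbb{Q}(\sqrt3)\cdot c$ being a cube), reduce to a polynomial in $t^3$ over $\mathbb{Q}$, and show its irreducible factors all have degree $> 6$, contradicting $t \in K$ with $[K:\mathbb{Q}] = 6$. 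That degree bookkeeping, together with handling the sign ambiguity and the sixth-power-free normalization of $c$, is the part requiring care.
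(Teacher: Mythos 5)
Your opening step matches the paper's: the duplication formula gives $x^6+20cx^3-8c^2=0$, hence $x^3=(-10\pm 6\sqrt{3})c$ and $\sqrt{3}\in K$, and you correctly observe that, unlike the cubic case, this is not yet a contradiction for sextic $K$. But none of the three routes you sketch to finish actually closes the gap. (a) You concede the reduction-mod-$p$ argument fails for $2$-power torsion. (b) Your claim that $x\in L=\mathbb{Q}(\sqrt{3})$ is wrong: there is no reason $L(x)/L$ should be Galois (neither $K$ nor $K/L$ is assumed normal), so the "non-Galois unless $\omega\in L$" dichotomy proves nothing; and in fact $x\notin L$ in general, since $-10+6\sqrt{3}=(\sqrt{3}-1)^3$ gives $x=(\sqrt{3}-1)c^{1/3}$ up to a cube root of unity. (c) The irreducibility/degree-bookkeeping route cannot work: after substituting $x=c^{1/3}t$ the relevant polynomial is $t^6+20t^3-8=(t^2+2t-2)\,q(t)$ with $q$ irreducible of degree $4$; the quartic factor is excluded by degree, but the quadratic factor $t^2+2t-2$ has roots $-1\pm\sqrt{3}$, which \emph{do} lie in $K$ whenever $\sqrt{3}\in K$, so no degree contradiction arises and you land exactly back at the situation you started from. (Your intermediate identity ``$(x^6+20cx^3-8c^2)^2=108c^2x^6$'' is also incorrect, since the left-hand side is $0$.)

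The missing idea — which is how the paper finishes — is to bring in the $y$-coordinate. From $x^3=(-10\pm6\sqrt{3})c$ one gets $y^2=x^3+c=(-9\pm6\sqrt{3})c$. Since the order-$2$ point $2P$ forces $c=a^3$ with $K=K_1(\sqrt{3})$ for the cubic subfield $K_1=\mathbb{Q}(a)$, write $y=t_1+t_2\sqrt{3}$ with $t_1,t_2\in K_1$ and compare coefficients: $t_1^2+3t_2^2=-9c$ and $2t_1t_2=\pm6c$, hence $t_1^2\pm3t_1t_2+3t_2^2=0$. The discriminant of this quadratic in $t_1/t_2$ is $-3$, so $\sqrt{-3}\in K_1$, impossible in a cubic field. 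Without some argument of this kind that exploits $y\in K$, your proposal does not rule out order $4$.
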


\begin{proof}
Let $T$ has an element of order $4$. Hence $T$ has an element of order $2$, which forces $c$ to be a cube, say $a^3$ for some $a \in K$. We assume that $P = (x,y)$ is an element of order $4$ in $T$. Then we note  $y(2P) = 0 \Longleftrightarrow x^6 + 20a^3x^3 -8a^6 = 0 \Longleftrightarrow x^3 = -10c \pm 6a^3\sqrt{3}$. Hence $x^3=(-10 \pm 6\sqrt{3})a^3$. Since $[K:\mathbb{Q}]=6$, we have $x = (-1\pm \sqrt{3})a \in K$. Thus, we conclude that $K =K_1(\sqrt{3})$, where $K_1 = \mathbb{Q}(a)$ is a cubic subfield of $K$. 

\smallskip

Since  $y \in K$, we can write $y = t_1 + t_2 \sqrt{3}$  for some $t_1, t_2 \in K_1$. Since $y^2 = x^3 + c \in K=K_1(\sqrt{3})$, we get $(t_1 +t_2 \sqrt{3})^2 = (-10c \pm 6c \sqrt{3}) +c \in K$. Since $\{1, \sqrt{3}\}$ is a basis of $K$ over $K_1$, we get two relations which are $t_1^2 + 3 t_2^2 = -9c$ and $t_1 t_2 = \pm 3c$. These two relations together imply $t_1^2 + 3t_2^2 \pm 3t_1 t_2 = 0$. Putting  $t = \displaystyle\frac{t_1}{t_2}\in K_1$, we get $$t^2 \pm 3t + 3 = 0 \Longrightarrow t = \frac{\mp 3 \mp \sqrt{-3}}{2}.$$ This implies that $\sqrt{-3} \in K_1$, which is a contradiction as $K_1$ is a cubic extension over $\mathbb{Q}$. Hence, we conclude that there does not exist any element of order $4$ in $E(K)_{tors}$.
\end{proof}

\smallskip

\begin{lemma}\label{r9} $T$ has an element of order $9$ if and only if $c$ is a square, $4c$ is a cube in $K$ and $\mathbb{Q}(r) \subset K$ with $r$ satisfying the relation $r^3-3r^2+1=0$..
\end{lemma}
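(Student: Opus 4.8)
The plan is to reprise the argument of Lemma~\ref{Q9} almost verbatim, the one genuinely new feature being that a sextic field may contain a primitive cube root of unity $\omega$, so one must keep track of ambiguities in cube roots.

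For the ``only if'' direction, suppose $P=(x,y)\in T$ has order $9$; then $3P$ has order $3$, so $x(3P)\bigl(x(3P)^3+4c\bigr)=0$. First I would rule out $x(3P)^3+4c=0$: setting $a=x(3P)$ and $c=-a^3/4$, the substitution $x=at$ made in Lemma~\ref{Q9} forces $t\in K$ to be a root of $f(X)=X^9-9X^8+24X^6+18X^5+3X^3-9X^2-1$, which is irreducible over $\mathbb{Q}$, and since $9$ does not divide $6=[K:\mathbb{Q}]$ this is impossible. Hence $x(3P)=0$, so $c$ is a square in $K$, and the addition-formula computation of Lemma~\ref{Q9} then gives $(x^3+c)\bigl(x^9-96cx^6+48c^2x^3+64c^3\bigr)=0$; discarding $x^3+c=0$ (which would force $2P=\mathcal{O}$) and writing $t=4c$, this becomes $(x^3+t)^3=27tx^6=t(3x^2)^3$, so $t=4c=\bigl((x^3+t)/(3x^2)\bigr)^3$ is a cube in $K$, say $4c=v^3$ with $v\in K$. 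From $(x^3+v^3)^3=(3vx^2)^3$ we get $x^3+v^3=\zeta\,3vx^2$ for some cube root of unity $\zeta\in K$; if $\omega\notin K$ then $\zeta=1$, while if $\omega\in K$ then $\zeta^{-1}v\in K$ is again a cube root of $4c$, so after replacing $v$ by $\zeta^{-1}v$ we may assume $\zeta=1$. Thus $x^3+v^3=3vx^2$, and $r:=x/v\in K$ satisfies $r^3-3r^2+1=0$; since this cubic is irreducible over $\mathbb{Q}$, $\mathbb{Q}(r)\subset K$, which yields all three asserted conditions.

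For the ``if'' direction, assume $c=d^2$ and $4c=v^3$ with $d,v\in K$, and $\mathbb{Q}(r)\subset K$ with $r^3-3r^2+1=0$. Eliminating $r$ between $\gamma^2=4r^3+1$ and $r^3-3r^2+1=0$ shows, as in Lemma~\ref{Q9} where one reaches $g(\gamma)g(-\gamma)=0$ with $g(X)=X^3-9X^2-9X+9$, that $\mathbb{Q}(\gamma)=\mathbb{Q}(r)$ for a suitable root $\gamma$ of $g$; hence $\gamma\in\mathbb{Q}(r)\subseteq K$ and $4r^3+1$ is a square in $K$. Then $P=(vr,\,d\gamma)$ has coordinates in $K$ and satisfies $(vr)^3+c=4cr^3+c=c(4r^3+1)=(d\gamma)^2$, so $P\in E(K)$, and a direct computation gives $3P=(0,\pm\sqrt{c})$, a point of order $3$; therefore $P$ has order exactly $9$.

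I expect the only delicate point to be the cube-root bookkeeping: in the forward direction, that when $\omega\in K$ the stray cube root of unity $\zeta$ can be absorbed into a new cube root $v$ of $4c$ without leaving $K$; and in the converse, pinning down that $\mathbb{Q}(r)\subseteq K$ is exactly the condition needed for the auxiliary quantity $\gamma=\sqrt{4r^3+1}$ to be $K$-rational, so that the explicit order-$9$ point actually lies in $E(K)$. Everything else is identical to the computations already carried out in Lemma~\ref{Q9}.
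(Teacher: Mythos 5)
Your proposal is correct and follows essentially the same route as the paper's proof, which likewise reduces everything to the computations of Lemma~\ref{Q9} and then handles the three possible cube roots of $4c$ in a sextic field (the paper via the factorization $(x^3+v^3-3vx^2)(x^3+v^3-3v\omega x^2)(x^3+v^3-3v\omega^2 x^2)=0$ and the substitutions $r_i=x/(v\omega^{i})$, you by absorbing the stray root of unity into the choice of $v$). One tiny bookkeeping slip: from $x^3+v^3=3\zeta vx^2$ the correct replacement is $v\mapsto \zeta v$ rather than $\zeta^{-1}v$, since with $w=\zeta v$ one gets $x^3+w^3=x^3+v^3=3\zeta vx^2=3wx^2$.
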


\begin{proof} 
Let $P = (x,y)$ be an element of order $9$ in $E(K)_{tors}$. 

Then following the proof of Lemma \ref{Q9}, $x$ satisfies the polynomial equation 
 $$
 (x^3+t)^3=27tx^6,
 $$
 where $t = 4c \in K$ with $c$ is a square in $K$.
 From this equation we observe that $t$ is a cube in $K$, say $v^3$ for some $v \in K$. 
 Then we can write
 $$(x^3+v^3 -3vx^2)(x^3+v^3 -3v\omega x^2)(x^3+v^3 -3v\omega^2 x^2)=0,$$
 where $\omega$ is a cube root of unity.
 Substituting $\frac{x}{v} = r_1 \in K, \frac{x}{v\omega} = r_2 \in K$ and $\frac{x}{v\omega^2} = r_3 \in K$, the above equation reduces to
 \begin{equation}\label{change21}
 r^3-3r^2+1=0,
 \end{equation}
 where $r \in K$ is one of $r_1,r_2$ and $r_3$. Since the polynomial $r^3-3r^2+1$ is an irreducible polynomial and $K$ is a cubic field, we see that $\mathbb{Q}(r) \subset K$ is a cubic extension over $\mathbb{Q}$. $\mathbb{Q}(r)/\mathbb{Q}$ is a normal extension as the equation \eqref{change21} has three real roots.

Also we have
$\frac{y^2}c = \frac{x^3}c+1$. Since $c$ is a square in $K$ with $c = \frac{v^3}{4}$ and $r = \frac{x}{v}$, by putting $\gamma = y/\sqrt{c} \in K$, we get
\begin{equation}\label{change22} 
\gamma^2 = 4r^3+1. 
\end{equation}
Combining equations \eqref{change21} and \eqref{change22}, we have $$ \gamma^6 -99 \gamma^4 +243 \gamma^2 -81=0,
$$
which further implies, $$  (\gamma^3-9\gamma^2-9\gamma+9) (\gamma^3+9\gamma^2-9\gamma-9)=0.$$  
By letting $f(X)=X^3-9X^2-9X+9$, we see that either $f(\gamma)=0$ or $f(-\gamma)=0$. Without loss of generality, we assume $f(\gamma)=0$. Since $f(X)$ is irreducible over $\mathbb{Q}$, we conclude that $\mathbb{Q}(\gamma) \subset K$ is a cubic extension over $\mathbb{Q}$. Since $r \in \mathbb{Q}(\gamma)$, we observe that $\mathbb{Q}(\gamma)=\mathbb{Q}(r)$. 

Hence, if $T$ has a point of order $9$ in $K$, then $c$ is a square and $4c$ is a cube in $K$ where $\mathbb{Q}(r) \subset K$ with $r$ satisfying the relation $r^3 - 3r^2 + 1 = 0$. 

Conversely, if $c$ is a square in $K$ and $4c$ is a cube in $K$ where $\mathbb{Q}(r) \subset K$ with $r$ satisfying the relation $r^3 - 3r^2 + 1 = 0$, then we can show that $((4c)^{1/3}r, \pm c^{1/2} \gamma_1),((4c)^{1/3}r, \pm c^{1/2} \gamma_2),$ $((4c)^{1/3}r, \pm c^{1/2} \gamma_3)$ are points of order $9$ in $K$, where  $\gamma_1, \gamma_2, \gamma_3$ are roots of the equation ${\gamma}^3 - 9{\gamma}^2 - 9{\gamma} + 9 = 0$. In fact if $\omega \notin K$, then there are $6$ points of order $9$ in $T$ and if $\omega \in K$, then there are $18$ points of order $9$ in $T$.
\end{proof}

\smallskip

\begin{lemma}\label{r27}
$T$ has no element of order $27$.
\end{lemma}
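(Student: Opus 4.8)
The plan is to argue by contradiction via reduction modulo a well-chosen prime, exactly as in Lemma \ref{rq}, the new input being a description of the reduced group when the prime stays inert. Suppose $T$ contains an element of order $27$, so $27\mid|T|$. By Dirichlet's theorem there are infinitely many primes $p\equiv 2\pmod{27}$; for all but finitely many of them $E$ has good reduction at every prime of $\mathcal{O}_K$ above $p$, and, by Proposition \ref{reduction}, the reduction map on $T$ is injective at each such prime. Fix such a $p$, write $p\mathcal{O}_K=\mathcal{P}_1^{e_1}\cdots\mathcal{P}_6^{e_6}$ with residual degrees $f_i$ and $\sum_{i=1}^{6}e_if_i=6$, and observe that one of the $f_i$ necessarily lies in $\{1,2,3,6\}$ (since $6$ cannot be written as a sum of positive integers all lying in $\{4,5\}$). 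Choose a prime $\mathcal{P}:=\mathcal{P}_j$ with $f:=f_j\in\{1,2,3,6\}$.

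Since $p\equiv 2\pmod 3$, Lemma \ref{dey1} (together with Lemma \ref{dey2} when $f=3$) gives $|\overline{E}(\mathcal{O}_K/\mathcal{P})|=p^{f}+1$ for $f\in\{1,3\}$ and $|\overline{E}(\mathcal{O}_K/\mathcal{P})|=(p^{f/2}+1)^{2}$ for $f\in\{2,6\}$. For $p\equiv 2\pmod{27}$ one checks $p+1\equiv 3\pmod{27}$ and $p^{3}+1\equiv 9\pmod{27}$, so none of $p+1$, $(p+1)^{2}$, $p^{3}+1$ is divisible by $27$. Hence for $f\in\{1,2,3\}$ we get $27\nmid|\overline{E}(\mathcal{O}_K/\mathcal{P})|$, which is impossible, since the reduction map embeds the order-$27$ cyclic subgroup of $T$ into $\overline{E}(\mathcal{O}_K/\mathcal{P})$.

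It remains to handle $f=6$, which is the real obstacle: now $|\overline{E}(\mathcal{O}_K/\mathcal{P})|=(p^{3}+1)^{2}$ \emph{is} divisible by $27$, so one must control the group structure rather than the order. Here I would use that, by Lemma \ref{dey1} with $n=1$, $|\overline{E}(\mathbb{F}_p)|=p+1$, so the Frobenius endomorphism $\phi$ of $\overline{E}/\mathbb{F}_p$ has zero trace and thus satisfies $\phi^{2}+p=0$, i.e.\ $\phi^{2}=[-p]$; consequently $\phi^{6}=[-p^{3}]$. Therefore
$$\overline{E}(\mathbb{F}_{p^{6}})=\ker(\phi^{6}-[1])=\ker([-p^{3}]-[1])=\overline{E}[\,p^{3}+1\,]\cong(\mathbb{Z}/(p^{3}+1)\mathbb{Z})^{2},$$
so every element of $\overline{E}(\mathcal{O}_K/\mathcal{P})=\overline{E}(\mathbb{F}_{p^{6}})$ has order dividing $p^{3}+1$. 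As $27\nmid p^{3}+1$, this group has no element of order $27$, again contradicting the injection of the order-$27$ subgroup of $T$. In every case we reach a contradiction, so $T$ has no element of order $27$.

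Thus the crux is the inert case $f=6$, where a counting argument alone is insufficient and one needs the identification $\overline{E}(\mathbb{F}_{p^{6}})\cong(\mathbb{Z}/(p^{3}+1)\mathbb{Z})^{2}$ coming from supersingular reduction. An alternative, in the spirit of Lemma \ref{K18}, would be to note that a point of order $27$ has triple of order $9$, so by Lemma \ref{r9} one is reduced to the case ``$c$ a square, $4c$ a cube, $\mathbb{Q}(r)\subset K$'', whereupon the relation $x(3P)=(4c)^{1/3}r$ together with the Weierstrass equation forces $x(P)$ to satisfy an explicit one-variable polynomial over $\mathbb{Q}$ that a \emph{magma} computation shows to be incompatible with $[K:\mathbb{Q}]=6$; but the reduction argument above is shorter.
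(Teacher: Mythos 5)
Your proof is correct, but it takes a genuinely different route from the paper. The paper first observes that an element of order $27$ yields one of order $9$, invokes Lemma \ref{r9} to force $\mathbb{Q}(r)\subset K$ with $r^3-3r^2+1=0$, writes $K=\mathbb{Q}(r)(\sqrt{d})$, and then applies the quadratic-twist decomposition of Lemma \ref{twist} to reduce to the cubic-field statement (Lemma \ref{K27}) for $E$ and its twist $E^d$ over $\mathbb{Q}(r)$ --- both still Mordell curves, so the cubic result applies. You instead run the reduction argument directly in the sextic field with $p\equiv 2\pmod{27}$, and you correctly identify the one genuine obstacle: when $p$ is inert, $|\overline{E}(\mathbb{F}_{p^6})|=(p^3+1)^2$ \emph{is} divisible by $27$, so the order count of Lemma \ref{dey1} is not enough. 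Your resolution --- $a_p=0$ gives $\phi^2=[-p]$, hence $\phi^6=[-p^3]$ and $\overline{E}(\mathbb{F}_{p^6})=E[p^3+1]\cong(\mathbb{Z}/(p^3+1)\mathbb{Z})^2$, whose exponent has $3$-adic valuation exactly $2$ --- is a correct and standard supersingular computation. What your approach buys is independence from Lemmas \ref{r9}, \ref{twist} and \ref{K27}, plus a structural refinement of Lemma \ref{dey1} (the full group, not just its order) that would be reusable elsewhere; what the paper's approach buys is brevity, since all the heavy lifting was already done in the cubic case. Both arguments are sound.
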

\begin{proof}
Suppose $T$ has an element of order $27$. Hence there exist an element of order $9$ in $T$. Thus, by Lemma \ref{r9}, we see that $c$ is a square, $4c$ is a cube in $K$ and $K= \mathbb{Q}(r)$ with $r$ satisfying the relation $r^3-3r^2+1=0$. Hence $K = \mathbb{Q}(r)(\sqrt{d})$ for some $d \in (\mathbb{Q}(r)/{\mathbb{Q}(r)}^2)^*$. Then, by Lemma \ref{twist}, we have
$$
E(K)[27] \cong E(\mathbb{Q}(r))[27] \times E^{d}(\mathbb{Q}(r))[27].
$$ 
Since, by Lemma \ref{K27}, there are no points of order $27$ in $E(\mathbb{Q}(r))$ and $E^d(\mathbb{Q}(r))$, we conclude that there are no elements of order $27$ in $E(K)$.
\end{proof}

\smallskip

\begin{lemma}\label{r18}
$T$ has no element of order $18$.
\end{lemma}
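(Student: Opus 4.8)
The plan mirrors the proof of Lemma \ref{K18}, feeding in the sextic analogues already established. Suppose, for contradiction, that $T$ contains an element $g$ of order $18$. Since $\gcd(2,9)=1$, the elements $9g$ and $2g$ have orders $2$ and $9$ respectively, so $T$ has an element of order $2$ and an element of order $9$. By Lemma \ref{r2}, the order-$2$ element forces $c$ to be a cube in $K$; write $c = a^3$ with $a \in K$. By Lemma \ref{r9}, the order-$9$ element forces $c$ to be a square in $K$ and $4c$ to be a cube in $K$.

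Next I would recover, for the order-$9$ point $P=(x,y)$, the polynomial relation used in Lemmas \ref{Q9} and \ref{K18}. Running the branch analysis of Lemma \ref{Q9} in the sextic setting: if $x(3P)^3+4c\ne 0$, then putting $x = x(3P)\cdot t$ shows that $t\in K$ is a root of $X^9 - 9X^8 + 24X^6 + 18X^5 + 3X^3 - 9X^2 - 1$, which is irreducible over $\mathbb{Q}$, contradicting $[K:\mathbb{Q}]=6$; hence $x(3P)=0$. Together with $x^3+c\ne 0$ (otherwise $2P=\mathcal{O}$), this gives the relation in \eqref{change}, namely $x^9 - 96cx^6 + 48c^2x^3 + 64c^3 = 0$. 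Substituting $c=a^3$ and $t=x/a\in K$ turns this into $t^9 - 96t^6 + 48t^3 + 64 = 0$, and this polynomial is irreducible over $\mathbb{Q}$ by the {\it magma} computation recorded in the proof of Lemma \ref{K18}. Thus $[\mathbb{Q}(t):\mathbb{Q}]=9$, which cannot divide $[K:\mathbb{Q}]=6$ --- a contradiction. Hence $T$ has no element of order $18$.

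The only step requiring real care is the book-keeping that lets us land on \eqref{change}: one must verify that the other order-$3$ branch ($x(3P)^3=-4c$) is excluded, which again rests on the degree bound $9\nmid 6$ and the irreducibility of the auxiliary degree-$9$ polynomial, exactly as over cubic fields; the rest is a direct appeal to Lemmas \ref{r2}, \ref{r9}, \ref{Q9} and \ref{K18}. Alternatively, one can argue purely field-theoretically: since $c$ and $4c$ are both cubes in $K$, so is $4$, say $\gamma^3=4$ with $\gamma\in K$; as $x^3-4$ is irreducible over $\mathbb{Q}$, $\mathbb{Q}(\gamma)$ is a non-normal cubic field, hence distinct from the \emph{normal} cubic $\mathbb{Q}(r)\subseteq K$ supplied by Lemma \ref{r9}. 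Consequently $\mathbb{Q}(r)$ contains no root of $x^3-4$, so $x^3-4$ is irreducible over $\mathbb{Q}(r)$ and $[\mathbb{Q}(r)(\gamma):\mathbb{Q}(r)]=3$, which is impossible since $\mathbb{Q}(r)(\gamma)\subseteq K$ and $[K:\mathbb{Q}(r)]=2$.
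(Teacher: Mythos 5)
Your proof is correct and follows the paper's intended route: the published proof of this lemma simply defers to Lemma \ref{K18}, and you carry out exactly that argument in the sextic setting (the order-$2$ point forces $c=a^3$, the order-$9$ point yields $t^9-96t^6+48t^3+64=0$ with $t=x/a\in K$ via the branch analysis of Lemma \ref{Q9}, and irreducibility of that degree-$9$ polynomial would force $9\mid [K:\mathbb{Q}]=6$, a contradiction). Your alternative field-theoretic argument --- that $4=(4c)/c$ would be a cube in $K$, so the non-normal cubic $\mathbb{Q}(4^{1/3})$ and the normal cubic $\mathbb{Q}(r)$ from Lemma \ref{r9} would both sit inside $K$, forcing $9\mid[K:\mathbb{Q}]$ --- is also valid and avoids the computer-assisted irreducibility check.
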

\begin{proof} 
Proof is similar to the proof of Lemma \ref{K18}.
\end{proof}

\smallskip

\noindent{\it Proof of Theorem \ref{sexticQ}}. By Lemmas \ref{rq}, \ref{r2}, \ref{r3}, \ref{r4}, \ref{r9}, \ref{r27} and \ref{r18}, we conclude that the only possible orders for the nontrivial torsion points in $T$ are $2,3,6$ and $9$.

\smallskip

\noindent{\bf Case 1.} ($c$ is a cube and a square in $K$)

\smallskip

 \noindent {\bf Subcase $a$}. ($ \omega \notin K$)

Since $c$ is a square in $K$, there are two points of order $3$ by Lemma \ref{r3}. Again $c$ is a cube in $K$ implies that there are only one point of order $2$ in $T$ by Lemma \ref{r2}. Hence, $T \cong \mathbb Z/6\mathbb Z$.

\smallskip

\noindent {\bf Subcase $b$}. ($\omega \in K$)

Since $c$ is a square in $K$, there are eight points of order $3$ by Lemma \ref{r3}, if $4^{\frac{1}{3}} \in K$. Also $c$ is a cube in $K$ provides that there are three points of order $2$ in $T$ by Lemma \ref{r2}. Hence, in this case, $T\cong \mathbb Z/6\mathbb Z \oplus \mathbb Z/6\mathbb Z$. If $4^{\frac{1}{3}} \notin K$, then there are only two points of order $3$ by Lemma \ref{r3} and we get $T \cong \mathbb Z/6\mathbb Z \oplus \mathbb Z/2\mathbb Z$.

\smallskip

\noindent{\bf Case 2.} ($c$ is a cube, but not a square in $K$)

\smallskip

In this case, write $c = a^3$ for $a \in K$.

\smallskip 
\noindent {\bf Subcase $a$}. ($ \omega \in K$)

Since $c$ is a cube in $K$, there are three points of order $2$ in $T$  by Lemma \ref{r2}. Also we observe that there does not exist any element of order $3$ by Lemma \ref{r3}. Hence, $T \cong \mathbb Z/2\mathbb Z \oplus \mathbb Z/2\mathbb Z$. 

\smallskip

\noindent {\bf Subcase $b$}.  ($\omega \notin K$)

In this case, $(-a,0)$ is the only point of order $2$ in $T$ by Lemma \ref{r2}. If $-3c$ is a square in $K$ and $4^{\frac{1}{3}} \in K$, then there are two points of order $3$ by Lemma \ref{r3}. Since $T$ is abelian, it has an element of order $6$ and hence $T\cong \mathbb Z/6\mathbb Z$. If $-3c$ is not a square in $K$ or $4^{\frac{1}{3}} \notin K$, then there does not exist any element of order $3$ in $T$ by Lemma \ref{r3} and we get $T \cong \mathbb Z/2\mathbb Z$.

\smallskip

\noindent{\bf Case 3.} ($c$ is a square, but not a cube in $K$)

\smallskip

At first we observe that there are no elements of order $2$ in $T$.

\smallskip

\noindent {\bf Subcase $a$}. ($ \omega \in K$)

Since $c$ is square in $K$, there are $8$ points of order $3$ in $T$  by Lemma \ref{r3} whenever $4c$ is a cube in $K$. If $\mathbb{Q}(r) \subset K$ with $r$ satisfying $r^3-3r^2+1=0$, then by Lemma \ref{r9}, there are $18$ points of order $9$ in $T$ and thus $T \cong \mathbb Z/9\mathbb Z \oplus \mathbb Z/3\mathbb Z$. If $\mathbb{Q}(r) \not\subset K$, then there are no points of order $9$ in $T$ and Hence $T \cong \mathbb Z/3\mathbb Z \oplus \mathbb Z/3\mathbb Z$. If $4c$ is not a cube in $K$, then there are two points of order $3$ in $T$  by Lemma \ref{r3} and we get $T \cong \mathbb Z/3\mathbb Z$. 

\smallskip

\noindent {\bf Subcase $b$}. ($\omega \notin K$)

In this case, there are two elements of order $3$ in $T$ by Lemma \ref{r3}. If $4c$ is a cube in $K$ and $\mathbb{Q}(r) \subset K$ with $r$ satisfying $r^3-3r^2+1=0$, then by Lemma \ref{r9}, there are $6$ points of order $9$ in $T$ and thus $T \cong \mathbb Z/9\mathbb Z$. If $4c$ is a cube in $K$ and $\mathbb{Q}(r) \not\subset K$, then there are no points of order $9$ in $T$ and Hence $T \cong \mathbb Z/3\mathbb Z$. If $4c$ is not a cube in $K$, then we have $T \cong \mathbb Z/3\mathbb Z$.

\smallskip

\noindent{\bf Case 4.} ($c$ is neither a square nor a cube in $K$)

\smallskip

 Since $c$ is not a cube, there are no elements of order $2$ in $T$. If $4c$ is a cube in $K$ and $-3c$ is a square in $K$, then there are two elements of order $3$ in $T$ and in that case,  $T \cong \mathbb Z/3\mathbb Z$.

\smallskip

Hence combining all the cases, Theorem \ref{sexticQ} follows. \qed

%\bigskip

\section{Proof of Theorem \ref{sexticK}}

 If $K$ varies over all sextic number fields and $E$ varies over all elliptic curves with complex multiplication over $K$, then Clark {\it et al.}\cite{ccrs} computed the following all possible collection of torsion subgroups;
 \begin{equation}\label{6cm}
 E(K)_{tors} \in
 \left\{
\begin{array}{l} 
\mathbb{Z}/m\mathbb{Z} \mbox{ for }m=1,2,3,4,6,7,9,10,14,18,19,26,
\\ \mathbb{Z}/2\mathbb{Z} \oplus \mathbb{Z}/m\mathbb{Z} \mbox{ for }m=2,4,6,14,\\
\mathbb{Z}/3\mathbb{Z} \oplus \mathbb{Z}/m\mathbb{Z} \mbox{ for }m=3,6,9,\\
\mathbb{Z}/6\mathbb{Z} \oplus \mathbb{Z}/6\mathbb{Z}.
 \end{array}
 \right.
 \end{equation}

From now onwards, throughout this section, $K$ stands for a sextic numer field and we denote the Mordell curve of the form $y^2 = x^3 + c$ for $c \in K$, simply by $E$.

\smallskip

Since any Mordell curve belongs to the family of elliptic curves with complex multiplication, \eqref{6cm} provides all possible collection for $E(K)_{tors}$.

\smallskip

If we restrict $c \in \mathbb{Q}$, then by Theorem \ref{sexticQ} we have already proved
$$E(K)_{tors} \in \Phi^{M}_{\mathbb{Q}}(6)=
 \left\{
\begin{array}{l} 
\mathbb{Z}/m\mathbb{Z} \mbox{ for }m=1,2,3,6,9,
\\ \mathbb{Z}/2\mathbb{Z} \oplus \mathbb{Z}/m\mathbb{Z} \mbox{ for }m=2,6,\\
\mathbb{Z}/3\mathbb{Z} \oplus \mathbb{Z}/m\mathbb{Z} \mbox{ for }m=3,9,\\
\mathbb{Z}/6\mathbb{Z} \oplus \mathbb{Z}/6\mathbb{Z}.
 \end{array}
 \right.
 $$

\smallskip

Now, our task is to eliminate some groups listed in \eqref{6cm}  and we show that the other groups in the above listing \eqref{6cm} can occur as $E(K)_{tor}$.

\begin{lemma}\label{s4}
The groups $\mathbb{Z}/4\mathbb{Z}$ and $\mathbb{Z}/2\mathbb{Z} \oplus \mathbb{Z}/4\mathbb{Z}$ do not appear as $E(K)_{tors}$, for any $K$.
\end{lemma}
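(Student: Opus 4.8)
The plan is as follows. Both $\mathbb{Z}/4\mathbb{Z}$ and $\mathbb{Z}/2\mathbb{Z}\oplus\mathbb{Z}/4\mathbb{Z}$ contain an element of order $4$, so I would first extract the constraints forced by the existence of such an element. Suppose $P=(x,y)\in E(K)$ has order $4$. Then $2P$ has order $2$, so $2P=(x_0,0)$ with $x_0^3=-c$; hence $c=a^3$ with $a=-x_0\in K$. The condition $y(2P)=0$ with $2P\neq\mathcal{O}$ gives $x^6+20cx^3-8c^2=0$, so $x^3=(-10\pm6\sqrt{3})\,c$, and since $c\neq 0$ this forces $\sqrt{3}\in K$. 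Consequently $\omega\notin K$: if $\omega\in K$ then $\sqrt{-3}\in K$, hence $\sqrt{-1}=\sqrt{-3}/\sqrt{3}\in K$, and $\mathbb{Q}(\sqrt{3},\sqrt{-1})$ would be a subfield of $K$ of degree $4$, contradicting $[K:\mathbb{Q}]=6$. Since $\omega\notin K$, cube roots in $K$ are unique, so from $x^3=((-1\pm\sqrt{3})a)^3$ we get $x=(-1\pm\sqrt{3})a$ and then $y^2=x^3+c=(-9\pm6\sqrt{3})a^3$.

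This already disposes of $\mathbb{Z}/2\mathbb{Z}\oplus\mathbb{Z}/4\mathbb{Z}$: that group has full $2$-torsion, which by Lemma \ref{r2} requires $\omega\in K$, contradicting $\omega\notin K$ just obtained. It remains to rule out a point of order $4$ altogether, which kills $\mathbb{Z}/4\mathbb{Z}$.

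For this I would split on $[\mathbb{Q}(a):\mathbb{Q}]\in\{1,2,3,6\}$. If $a\in\mathbb{Q}$ then $c\in\mathbb{Q}$ and Lemma \ref{r4} applies directly. If $[\mathbb{Q}(a):\mathbb{Q}]=3$, then $\mathbb{Q}(a)$ is a cubic subfield of $K$, linearly disjoint from $\mathbb{Q}(\sqrt{3})$, so $K=\mathbb{Q}(a)(\sqrt{3})$; writing $y=t_1+t_2\sqrt{3}$ with $t_1,t_2\in\mathbb{Q}(a)$ and comparing coefficients in the $\mathbb{Q}(a)$-basis $\{1,\sqrt{3}\}$ against $y^2=-9a^3+6a^3\sqrt{3}$ (legitimate since $a^3=c\in\mathbb{Q}(a)$) gives $t_1^2+3t_2^2=-9a^3$ and $t_1t_2=\pm3a^3$, hence $(t_1/t_2)^2\pm3(t_1/t_2)+3=0$ and $\sqrt{-3}\in\mathbb{Q}(a)$, which is impossible in a cubic field. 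This is exactly the argument of Lemma \ref{r4}.

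The main obstacle is the remaining case in which $\mathbb{Q}(a)$ has even degree, i.e.\ $\mathbb{Q}(a)=\mathbb{Q}(\sqrt{3})$ (it must equal $\mathbb{Q}(\sqrt{3})$, since $\mathbb{Q}(a,\sqrt{3})\subseteq K$ rules out $\mathbb{Q}(a)$ quadratic and distinct from $\mathbb{Q}(\sqrt{3})$) or $\mathbb{Q}(a)=K$. Then $c=a^3$ need not lie in the cubic subfield $K_1$ with $K=K_1(\sqrt{3})$, so the coefficient comparison no longer yields $\sqrt{-3}\in K_1$; instead one must decide directly whether $(-9\pm6\sqrt{3})a^3$ can be a square in $K$ subject to $a^3=c\in K$. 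I would attack this via the identity $-9+6\sqrt{3}=6\sqrt{3}/(1+\sqrt{3})^2$, which reduces the squareness condition to ``$6\sqrt{3}\,a$ is a square in $K$'', followed by a norm computation for $K/\mathbb{Q}(\sqrt{3})$, and, if that does not close it, a {\it magma}-assisted analysis after passing to the Kubert--Tate normal form $E(b,c')$ and imposing $j(b,c')=0$ together with the order-$4$ relation — the same style of computation as in Lemmas \ref{Q9} and \ref{K18}.
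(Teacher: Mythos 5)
Your reduction is sound up to the point you yourself flag: a point of order $4$ forces $c=a^3$ with $a\in K$, $\sqrt{3}\in K$, $x=(-1\pm\sqrt{3})a$ and $y^2=(-9\pm6\sqrt{3})a^3$; hence $\omega\notin K$, which correctly rules out $\mathbb{Z}/2\mathbb{Z}\oplus\mathbb{Z}/4\mathbb{Z}$; and when $[\mathbb{Q}(a):\mathbb{Q}]\in\{1,3\}$ the coefficient-comparison argument of Lemma \ref{r4} applies verbatim. This is in fact all the paper's own proof amounts to: it simply defers to Lemma \ref{r4}, whose argument tacitly uses that $c$ is rational, so that $\mathbb{Q}(a)$ is at most cubic and $c=a^3$ lies in the cubic subfield $K_1$.

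The case you isolate as ``the main obstacle'' is, however, a genuine gap, and it cannot be closed, because the statement fails there. Your own computation shows that a point of order $4$ exists precisely when $\sqrt{3}\in K$, $c=a^3$ with $a\in K$, and $\pm6\sqrt{3}\,a$ is a square in $K$ --- and this is satisfiable once $a$ is allowed to involve $\sqrt{3}$. Take $a=6\sqrt{3}$, so $c=a^3=648\sqrt{3}$ and $6\sqrt{3}\,a=108=(6\sqrt{3})^2$. On $E:y^2=x^3+648\sqrt{3}$ the point $P=\bigl(18-6\sqrt{3},\,54(\sqrt{3}-1)\bigr)$ lies on the curve (both sides equal $11664-5832\sqrt{3}$), and the duplication formula gives $2P=(-6\sqrt{3},0)$, a point of order $2$; so $P$ has order $4$ already over $\mathbb{Q}(\sqrt{3})$, hence over any sextic field containing $\sqrt{3}$, e.g.\ $K=\mathbb{Q}(\sqrt{3},2^{1/3})$. (A consistency check: at a prime of $K$ above $13$ the reduction is $y^2=x^3+5$ over $\mathbb{F}_{13}$, which has $16$ points.) Combining the list \eqref{6cm} with your correct exclusion of full $2$-torsion, one gets $E(K)_{tors}\cong\mathbb{Z}/4\mathbb{Z}$ for such $K$. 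So no completion of your final step is possible: the defect lies in the statement itself (and in the paper's proof, which overlooks that for $c\in K$ the element $a=c^{1/3}$ need not generate a subfield of odd degree), and $\mathbb{Z}/4\mathbb{Z}$ would have to be added to $\Phi^{M}(6)$ in Theorem \ref{sexticK}.
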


\begin{proof}
It is enough to prove that there is no  element of order $4$ in $E(K)_{tors}$. By the similar approach of the proof of Lemma \ref{r4}, one can prove the result and we omit the proof here. 
\end{proof}

\smallskip

\begin{lemma}\label{s5}
The groups $\mathbb{Z}/10\mathbb{Z}$ and $\mathbb{Z}/26\mathbb{Z}$ do not appear  as $E(K)_{tors}$, for any $K$.
\end{lemma}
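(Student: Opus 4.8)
The plan is to show that neither $5$ nor $13$ can divide $|E(K)_{\mathrm{tors}}|$, which clearly suffices since $10 = 2\cdot 5$ and $26 = 2\cdot 13$. Suppose first that $T = E(K)_{\mathrm{tors}}$ contains an element of order $q$, where $q \in \{5,13\}$; in either case $q \equiv 2 \pmod 3$ is false, so I cannot reuse Lemma \ref{rq} verbatim, but the same reduction-mod-$\mathcal{P}$ strategy applies with a small twist on the choice of auxiliary prime. Since $\gcd(2,3q) = 1$ (indeed $q$ is odd and $q\neq 3$), Dirichlet's theorem supplies infinitely many rational primes $p \equiv 2 \pmod{3q}$, and among these I pick one at which $E$ has good reduction. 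Factoring $p\mathcal{O}_K = \mathcal{P}_1^{e_1}\cdots \mathcal{P}_6^{e_6}$ with residue degrees $f_i$ satisfying $\sum e_i f_i = 6$, there is a prime $\mathcal{P}_j$ with $f_j \in \{1,2,3,6\}$. By Lemma \ref{dey1} (applied with $n = f_j$, noting $p \equiv 2 \pmod 3$), the order $|\overline{E}(\mathcal{O}_K/\mathcal{P}_j)|$ is either $p^{f_j}+1$ (for $f_j$ odd) or $(p^{f_j/2}+1)^2$ (for $f_j \equiv 2 \pmod 4$), and in every case $|\overline{E}(\mathcal{O}_K/\mathcal{P}_j)|$ divides $(p^3+1)^2$. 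By Proposition \ref{reduction} the reduction map is injective for all but finitely many $\mathcal{P}$, so $q \mid |T|$ forces $q \mid (p^3+1)^2$, hence $q \mid (p^3+1)$.

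Now I impose the congruence $p \equiv 2 \pmod q$, which is built into the choice $p \equiv 2 \pmod{3q}$. Then $p^3 + 1 \equiv 8 + 1 = 9 \pmod q$, so $q \mid 9$. Since $q \in \{5,13\}$, this is a contradiction. Therefore $T$ has no element of order $5$ and no element of order $13$, so in particular $\mathbb{Z}/10\mathbb{Z}$ and $\mathbb{Z}/26\mathbb{Z}$ cannot occur as $E(K)_{\mathrm{tors}}$.

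I expect the only subtlety — really the single point that needs care — is the choice of the modulus in Dirichlet's theorem: one must pick $p$ simultaneously in the class $2 \bmod 3$ (to invoke Lemma \ref{dey1}) and in the class $2 \bmod q$ (to make $p^3+1 \equiv 9$), which is exactly why $p \equiv 2 \pmod{3q}$ is the right condition and why one needs $\gcd(2,3q)=1$, valid here because $q$ is an odd prime different from $3$. Everything else is the standard good-reduction-and-inject argument already used in Lemmas \ref{rq}, \ref{Kq} and \ref{Qq}. In fact the argument shows more generally that no prime $q > 3$ with $q \nmid 9$ — that is, every prime $q \geq 5$ — can divide $|T|$, which is already the content of Lemma \ref{rq}; the present lemma is the immediate corollary obtained by combining that divisibility restriction with the structural constraint that the $q$-part of $T$ would have to pair with the $2$-torsion described in Lemma \ref{r2}.
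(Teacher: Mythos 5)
There is a genuine gap, and it sits exactly at the step you flag as routine. Lemma~\ref{s5} belongs to the proof of Theorem~\ref{sexticK}, where $c$ is an arbitrary element of the sextic field $K$, not a rational integer. Your argument rests on the claim that $|\overline{E}(\mathcal{O}_K/\mathcal{P}_j)|$ always divides $(p^3+1)^2$, justified by Lemma~\ref{dey1}; but Lemma~\ref{dey1} is stated, and is only true, for $c$ a nonzero \emph{integer}: in that case the reduced curve is defined over $\mathbb{F}_p$ with $a_p=0$, which forces its point counts over all extensions. When $c\in K$ and $\mathcal{P}_j$ has residue degree $2$ or $6$, the reduction is a Mordell curve over $\mathbb{F}_{p^2}$ or $\mathbb{F}_{p^6}$ that need not descend to $\mathbb{F}_p$; it is a sextic twist of $y^2=x^3+1$, and over $\mathbb{F}_{p^6}$ its order can be any of $(p^3\pm 1)^2$ and $p^6\pm p^3+1$, three of which do not divide $(p^3+1)^2$. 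So the inference ``$q\mid |T|$ forces $q\mid(p^3+1)$'' is unjustified. This is precisely why the paper's own proof does not recycle Lemma~\ref{rq} but instead invokes supersingularity (Proposition~\ref{super1}, Lemma~\ref{super2}) together with Waterhouse's classification (Proposition~\ref{criterion}) to pin down the trace $a\in\{\pm p^3,\pm 2p^3\}$, choosing $p\equiv 5\pmod{12q}$ (in particular $p\equiv 1\pmod 4$) specifically to rule out $a=0$. Your closing remark that the lemma is an ``immediate corollary'' of Lemma~\ref{rq} reflects the same oversight: Lemma~\ref{rq} is proved only for rational Mordell curves.

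The defect is repairable without changing your choice of auxiliary prime: with $p\equiv 2\pmod{3q}$ and the possible orders correctly enumerated as $p^{f_j}+1$ for $f_j$ odd and $(p^{f_j/2}\pm 1)^2$, $p^{f_j}\pm p^{f_j/2}+1$ for $f_j$ even, one checks that modulo $q$ these reduce to $3,9$; $1,9,3,7$; $49,81,57,73$, none of which vanish for $q=5$ or $q=13$. But that enumeration (via twists or via Proposition~\ref{criterion}) is the real content of the lemma and must be supplied; as written, your proof silently assumes the rational-coefficient case.
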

\begin{proof}
It is enough to prove that there do not exist any element of order $5$ and $13$ in $E(K)_{tors}$. Let us assume that there exists a point of  order $\ell$ in $E(K)_{tors}$, where $\ell$ is either $5$ or $13$. Then, $\ell$ divides $|E(K)_{tors}|$. Now, we set $q=13$. Since   $\gcd(5,12q) = 1$, by Dirichlet's theorem on primes in arithmetic progressions, there are infinitely many primes  $p \equiv 5 \pmod{12q}$ . We consider such a prime and also assume $p \mathcal{O}_K = {\mathcal{P}}_{1}^{e_1} \cdots {\mathcal{P}}_{6}^{e_6}$ is the ideal factorization in $\mathcal{O}_K$ where $\mathcal{P}_1,\ldots,\mathcal{P}_6$ are prime ideals in $\mathcal{O}_K$ lying above $p$ and with $0 \leq e_i \leq 1$. Also we know that $\sum_{i = 1}^{6} e_{i}f_{i} = 6$ where $f_i$'s are residual degree for $\mathcal{P}_i$'s. Hence we can choose a prime ideal for which residual degree is either $1,2,3$ or $6$.

\smallskip

Let $\mathcal{P}_i$ be such a prime ideal and consider the reduction modulo $\mathcal{P}_i$ map. Note that $|\mathcal{O}_K/\mathcal{P}_i| = p^{f_i}$, where $f_i = 1,2,3$ or $6$. 

\smallskip

Since $2$ and $3$ both divide $6$, it is enough to calculate $|\overline{E}(\mathcal{O}_K/\mathcal{P}_i)|$ only for $f_i = 6$. Since $p \equiv 2 \pmod 3$, by Proposition \ref{super1}, we see that $E$ is a supersingular elliptic curve over $\mathbb{F}_{p^6}$. Therefor, we get $E(\mathbb{F}_{p^6}) = p^6+1-a$ with $|a| \leq 2p^3$. Also, by Lemma \ref{super2}, we get $p \mid a$ and by Proposition \ref{criterion}, we have either $a=\pm p^3$ or $\pm 2p^3$. 

\smallskip

 Also, for $f_i = 1,2,3$ or $6$, we have either $|\overline{E}(\mathcal{O}_K/\mathcal{P}_i)|$ divides $(p^3 \pm 1)^2$ or $|\overline{E}(\mathcal{O}_K/\mathcal{P}_i)|$ divides $(p^6 \pm p^3 - 1)$. Hence,  we  conclude that $q \mid (p^3 \pm 1)$ or $q \mid (p^6 \pm p^3 - 1)$ by Proposition \ref{reduction}, that is, $13 \mid (p^3 \pm 1)$ or $13 \mid (p^6 \pm p^3 - 1)$. Since $p \equiv 5 \pmod{q} \equiv 5 \pmod{13}$ and $\ell$ is either $5$ or $13$, we see that $\ell \not| (p^3 \pm 1)$ and $\ell \not| (p^6 \pm p^3 - 1)$, which is a contradiction. Hence there does not exist any point of order $5$ or $13$. This completes the proof.
\end{proof}

\smallskip

\begin{lemma}\label{s14}
The group $\mathbb{Z}/14\mathbb{Z}$ does not appear as $E(K)_{tors}$, for any $K$.
\end{lemma}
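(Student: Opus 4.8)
The plan is to show there is no point of order $7$ in $E(K)_{tors}$ for any sextic field $K$ and any Mordell curve $E/K$; combined with the absence of order-$2$ issues this rules out $\mathbb{Z}/14\mathbb{Z}$. Suppose for contradiction that $P \in E(K)_{tors}$ has order $7$. As in Lemmas \ref{Q7}-type arguments (cf. Lemma \ref{rq}), I would pass to a finite field by choosing a prime $p$ of good reduction in a suitable arithmetic progression so that the reduction-mod-$\mathcal{P}$ map (Proposition \ref{reduction}) is injective, hence $7 \mid |\overline{E}(\mathcal{O}_K/\mathcal{P})|$. The difficulty is that the naive congruence choice $p \equiv 2 \pmod{3}$ only gives $7 \mid (p^3+1)^2$, and since $-1$ is a cube root of $1$ modulo $7$ when... well, $p^3 \equiv \pm 1 \pmod 7$ is perfectly possible, so that progression does not yield a contradiction. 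So the main obstacle is selecting the right progression.

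To overcome this, I would instead impose conditions modulo $7$ directly: pick $p$ with $p \equiv 2 \pmod 3$ (so $E$ is supersingular over every $\mathbb{F}_{p^{2k}}$ by Proposition \ref{super1} and Lemma \ref{dey1}) and simultaneously $p \equiv a \pmod 7$ for a residue $a$ chosen so that none of the possible point counts is divisible by $7$. Concretely, by Lemma \ref{dey1} and the supersingularity analysis in the proof of Lemma \ref{s5}, for a prime $\mathcal{P}$ of residue degree $f \in \{1,2,3,6\}$ the order $|\overline{E}(\mathcal{O}_K/\mathcal{P})|$ divides one of $p+1$, $(p+1)^2$, $p^3+1$, $(p^3+1)^2$, $p^6 \pm p^3 + 1$ (the last coming from the supersingular traces $a = \pm p^3$, giving $p^6+1 \mp p^3$). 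So I need a residue $a \bmod 7$ with $a \not\equiv -1$, $a^3 \not\equiv -1$, and $a^6 \pm a^3 + 1 \not\equiv 0 \pmod 7$. Since $a^6 \equiv 1 \pmod 7$ for $a \not\equiv 0$, the last condition becomes $a^3 \not\equiv 0$ and $a^3 \not\equiv -2 \pmod 7$, i.e. $a^3 \notin \{0, 5\}$; combined with $a \ne 6$ and $a^3 \ne 6$ (i.e. $a \notin \{3,5,6\}$), and noting $a^3 \bmod 7 \in \{0,1,6\}$ so $a^3 = 5$ never happens. Thus any $a \in \{2,4\}$ works (these have $a^3 \equiv 1 \pmod 7$, $a \ne 6$); e.g. $a = 2$: then $p \equiv 2 \pmod 3$ and $p \equiv 2 \pmod 7$, i.e. $p \equiv 2 \pmod{21}$, compatible by CRT, and Dirichlet gives infinitely many such primes.

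Having fixed such a prime $p$ of good reduction (discarding the finitely many bad ones and the finitely many $\mathcal{P}$ where reduction fails to be injective), I would conclude: $|\overline{E}(\mathcal{O}_K/\mathcal{P})|$ divides one of $p+1 \equiv 3$, $p^3+1 \equiv 2$ (since $2^3 = 8 \equiv 1$), their squares, or $p^6 \pm p^3 + 1 \equiv 1 \pm 1 + 1 \in \{1,3\} \pmod 7$, none of which is $\equiv 0 \pmod 7$. This contradicts $7 \mid |T| \mid |\overline{E}(\mathcal{O}_K/\mathcal{P})|$ from Proposition \ref{reduction}. Hence no point of order $7$ exists, so $\mathbb{Z}/14\mathbb{Z} \notin \Phi^M(6)$, as claimed. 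The only genuinely delicate point is the bookkeeping of which integers $p^{f}\pm 1$ and $p^6 \pm p^3 + 1$ can actually divide $|\overline{E}(\mathcal{O}_K/\mathcal{P})|$ in the supersingular case; this is exactly the computation already carried out in the proof of Lemma \ref{s5}, which I would invoke rather than redo.
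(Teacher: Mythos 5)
Your proposal sets out to prove something strictly stronger than the lemma --- that no Mordell curve over any sextic field has a point of order $7$ --- and that stronger statement is false. Lemma \ref{s19} of this paper exhibits explicit sextic fields and Mordell curves with $E(K)_{tors}\cong \mathbb{Z}/7\mathbb{Z}$ and $E(K)_{tors}\cong \mathbb{Z}/2\mathbb{Z}\oplus\mathbb{Z}/14\mathbb{Z}$ (indeed Theorem \ref{sexticK} asserts $\mathbb{Z}/7\mathbb{Z}\in\Phi^M(6)$), so no choice of congruence conditions on $p$ can yield the contradiction you are after. The concrete place where your bookkeeping breaks is the list of possible supersingular point counts: for a prime $\mathcal{P}$ of residue degree $6$ the reduced curve is a sextic twist of $y^2=x^3+1$ over $\mathbb{F}_{p^6}$ and its trace ranges over $\{\pm 2p^3,\pm p^3\}$, so besides $(p^3+1)^2$ and $p^6\pm p^3+1$ you must also allow $N=(p^3-1)^2$ (trace $+2p^3$); similarly for residue degree $2$ you must allow $(p-1)^2$ and $p^2\pm p+1$. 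Since $p^3\equiv\pm 1\pmod 7$ for every $p$ coprime to $7$, one of $(p^3-1)^2$, $(p^3+1)^2$ is always divisible by $7$, so every residue class $a\bmod 7$ fails (your choice $a=2$ gives $p^3\equiv 1$ and hence $7\mid (p^3-1)^2$, and also $7\mid p^2+p+1$). Even restricting to the case forced by the $14$-torsion hypothesis, where $c$ is a cube and the reduction is only a quadratic twist, the trace is $\pm 2p^3$ and the same obstruction appears.

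The lemma therefore cannot be proved by reduction alone; you must use the full hypothesis $T\cong\mathbb{Z}/14\mathbb{Z}$ globally. The paper's proof does exactly this: the order-$2$ point forces $c=a^3$ with $a\in K$, and the fact that there is only \emph{one} point of order $2$ forces $\omega\notin K$; then the $7$-division polynomial, written in the variable $t=x^3/a^3$, factors as $(7t^2-4t+16)$ times a sextic, the quadratic factor is excluded because its roots involve $\omega$, and substituting $t=s^3$ with $s=x/a\in K$ turns the sextic factor into an irreducible polynomial of degree $18$ over $\mathbb{Q}$, contradicting $[K:\mathbb{Q}]=6$. You would need to replace your local argument with something of this global, division-polynomial flavour.
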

\begin{proof}
Suppose $E(K)_{tors} \cong \mathbb{Z}/14\mathbb{Z}$. In this case, we first observe that $E(K)[2] \cong \mathbb{Z}/2\mathbb{Z}$. Since  $E(K)_{tors}$ has a point of order $2$, we see that $c$ is a cube in $K$, say, $c = a^3$ for some $a \in K$. Since $E(K)_{tors}$ contains exactly one nontrivial point of order $2$,  by Lemma \ref{r2}, we get $\sqrt{-3} \notin K$. Let $P=(x,y)$ be a point of order $7$ in $E(K)_{tors}$. Then the corresponding division polynomial equation is
$$
(7t^2-4t+16)(t^6+564t^5-5808t^4-123136t^3-189696t^2-49152t+4096) = 0,
$$
where $t=\frac{x^3}{a^3} \in K$.

\smallskip

If $(7t^2-4t+16)=0$, then we get $t = \frac{2 \pm 6\omega}{7}$. This is a contradiction as $\omega \notin K$. Thus,
\begin{equation}\label{reduce}
t^6+564t^5-5808t^4-123136t^3-189696t^2-49152t+4096 = 0.
\end{equation}
Putting $t= s^3 = (\frac{x}{a})^3$ in  \eqref{reduce}, we get
$$
s^{18}+564s^{15}-5808s^{12}-123136s^9-189696s^6-49152s^3+4096=0.
$$

\smallskip

Using {\it magma}, we conclude that the polynomial in $s$ variable is an irreducible polynomial over $\mathbb{Q}$, which is a contradiction as $s \in K$ and $[K:\mathbb{Q}] = 6$. Therefore there does not exist a point of order $7$ in $E(K)_{tors}$. Hence the group $\mathbb{Z}/14\mathbb{Z}$ does not appear as a torsion subgroup in $E(K)$.
\end{proof}

\smallskip

\begin{lemma}\label{s18}
The group $\mathbb{Z}/18\mathbb{Z}$ does not appear as $E(K)_{tors}$, for any $K$.
\end{lemma}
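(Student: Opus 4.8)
The plan is to show that $E(K)_{tors}$ having an element of order $18$ leads to a contradiction, and the natural way to do this is to combine the structural information we already have about points of order $2$ and order $9$. Suppose $P \in E(K)_{tors}$ has order $18$. Then $9P$ has order $2$, so by Lemma \ref{r2} the constant $c$ must be a cube in $K$, say $c = a^3$ for some $a \in K$. Also $2P$ has order $9$, so by Lemma \ref{r9} we know $c$ is a square in $K$, $4c$ is a cube in $K$, and $\mathbb{Q}(r) \subset K$ with $r$ satisfying $r^3 - 3r^2 + 1 = 0$. Thus the cubic field $\mathbb{Q}(r)$ sits inside the sextic field $K$.

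The next step is to make the division polynomial computation explicit, exactly as in the proof of Lemma \ref{K18}. Writing $c = a^3$ and letting $Q = (x,y)$ be a point of order $9$ in $E(K)$, the condition coming from $3Q$ being a point of order $3$ together with $c = a^3$ yields (as derived in the proof of Lemma \ref{Q9}) the polynomial relation
$$
x^{12} - 95a^3 x^9 - 48 a^6 x^6 + 112 a^9 x^3 + 64 a^{12} = 0,
$$
which after substituting $t = x/a \in K$ factors as $(t^3 + 1)(t^9 - 96 t^6 + 48 t^3 + 64) = 0$. Since $t^3 + 1 \neq 0$ (otherwise $2Q = \mathcal{O}$), we get $t^9 - 96 t^6 + 48 t^3 + 64 = 0$. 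By {\it magma}, the polynomial $f(t) = t^9 - 96 t^6 + 48 t^3 + 64$ is irreducible over $\mathbb{Q}$, so $[\mathbb{Q}(t) : \mathbb{Q}] = 9$.

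Now comes the point where the argument must differ slightly from the cubic case: here $[K : \mathbb{Q}] = 6$, not $3$, so irreducibility of $f$ of degree $9$ is not quite an immediate contradiction — one needs $9 \nmid 6$, which is still true. Indeed $\mathbb{Q}(t) \subseteq K$ would force $9 = [\mathbb{Q}(t) : \mathbb{Q}]$ to divide $[K : \mathbb{Q}] = 6$, which is impossible. Hence there is no point of order $18$ in $E(K)_{tors}$, and the group $\mathbb{Z}/18\mathbb{Z}$ does not occur. Alternatively, one could invoke the quadratic twist trick: since $\mathbb{Q}(r) \subset K$ and $[K : \mathbb{Q}(r)] = 2$, write $K = \mathbb{Q}(r)(\sqrt{d})$ and use Lemma \ref{twist} to reduce to the nonexistence of order-$9$ (or order-$18$) points over the cubic field $\mathbb{Q}(r)$ — but this only handles the odd part, so the division-polynomial degree argument above is the cleaner route. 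I expect the only mild subtlety to be making sure the substitution $t = x/a$ and the factorization are stated correctly; the divisibility obstruction $9 \nmid 6$ is the crux and it is immediate.
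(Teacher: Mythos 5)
Your proof is correct and takes essentially the same route as the paper: the paper's own proof of this lemma simply defers to Lemma \ref{K18}, and you have carried out exactly that adaptation, correctly identifying that the only change needed is replacing the obstruction $9 \neq 3$ by $9 \nmid 6$ (indeed $9 > 6$ already suffices, since $\mathbb{Q}(t) \subseteq K$ forces $[\mathbb{Q}(t):\mathbb{Q}] \le 6$). Your aside dismissing the quadratic-twist alternative for the order-$18$ question is also well judged, since Lemma \ref{twist} only controls odd-order torsion.
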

\begin{proof}
Proof of this lemma is similar to the proof of Lemma \ref{K18} and we omit the proof here.
\end{proof}

\smallskip

\begin{lemma}\label{s36}
The group $\mathbb{Z}/3\mathbb{Z} \oplus \mathbb{Z}/6\mathbb{Z}$ does not appear as $E(K)_{tors}$, for any $K$.
\end{lemma}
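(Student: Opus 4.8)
The plan is to derive a contradiction from an incompatibility between the forced structure of $E(K)[2]$ and that of $E(K)[3]$, using nothing beyond the torsion-structure Lemmas \ref{r2} and \ref{r3} already established in this section. So suppose, for contradiction, that $E(K)_{tors}\cong\mathbb{Z}/3\mathbb{Z}\oplus\mathbb{Z}/6\mathbb{Z}$. Since $\mathbb{Z}/6\mathbb{Z}\cong\mathbb{Z}/2\mathbb{Z}\oplus\mathbb{Z}/3\mathbb{Z}$, this group is isomorphic to $\mathbb{Z}/2\mathbb{Z}\oplus\mathbb{Z}/3\mathbb{Z}\oplus\mathbb{Z}/3\mathbb{Z}$; hence its $3$-primary part is $\mathbb{Z}/3\mathbb{Z}\oplus\mathbb{Z}/3\mathbb{Z}$ and its $2$-primary part is $\mathbb{Z}/2\mathbb{Z}$. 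In other words $E(K)[3]\cong\mathbb{Z}/3\mathbb{Z}\oplus\mathbb{Z}/3\mathbb{Z}$ and $E(K)[2]\cong\mathbb{Z}/2\mathbb{Z}$.

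Next I would apply Lemma \ref{r3}. Among the cases listed there, the only one for which $E(K)[3]$ is the full group $\mathbb{Z}/3\mathbb{Z}\oplus\mathbb{Z}/3\mathbb{Z}$ requires $4c$ to be a cube in $K$ and both $c$ and $\omega$ to be squares in $K$. In particular $\omega$ is a square in $K$, and since $\omega$ is already an element of $\overline{\mathbb{Q}}$, this forces $\omega\in K$.

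Finally, $E(K)[2]\cong\mathbb{Z}/2\mathbb{Z}$ is nontrivial, so by Lemma \ref{r2} the constant $c$ must be a cube in $K$. But then the first case of Lemma \ref{r2}, namely $c$ a cube in $K$ together with $\omega\in K$, yields $E(K)[2]\cong\mathbb{Z}/2\mathbb{Z}\oplus\mathbb{Z}/2\mathbb{Z}$, which contradicts $E(K)[2]\cong\mathbb{Z}/2\mathbb{Z}$. Therefore no sextic field $K$ and no $c\in K$ can give $E(K)_{tors}\cong\mathbb{Z}/3\mathbb{Z}\oplus\mathbb{Z}/6\mathbb{Z}$.

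I do not expect any real obstacle in this argument: it is a direct bookkeeping exercise on top of Lemmas \ref{r2} and \ref{r3}. The only points deserving a sentence of care are the elementary observation that a square root of $\omega$ lying in $K$ forces $\omega\in K$, and the decomposition of $\mathbb{Z}/3\mathbb{Z}\oplus\mathbb{Z}/6\mathbb{Z}$ into its $2$- and $3$-primary parts, both of which are immediate.
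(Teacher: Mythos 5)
Your argument is correct and is essentially the paper's own proof: both deduce from the full $3$-torsion (via Lemma \ref{r3}) that $\omega \in K$, from the nontrivial $2$-torsion (via Lemma \ref{r2}) that $c$ is a cube in $K$, and then obtain the contradiction that $E(K)[2]$ would be $\mathbb{Z}/2\mathbb{Z}\oplus\mathbb{Z}/2\mathbb{Z}$ rather than $\mathbb{Z}/2\mathbb{Z}$. No substantive difference.
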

\begin{proof}
At first we observe that $E(K)[2] \not\cong \{\mathcal{O}\}$ and hence $c$ is a cube in $K$.
Since $E(K)_{tors}$ contains eight nontrivial elements of order $3$, by Lemma \ref{r3} we conclude that $\omega \in K$. Since $\omega \in K$ and $c$ is a cube in $K$, by Lemma \ref{r2} we see that $E(K)_{tors}$ has three points of order $2$. This is a contradiction as there is only one nontrivial point of order $2$ in $\mathbb{Z}/3\mathbb{Z} \oplus \mathbb{Z}/6\mathbb{Z}$. 
\end{proof}

\smallskip

\begin{lemma}\label{s19}
The groups $\mathbb{Z}/19\mathbb{Z}, \mathbb{Z}/7\mathbb{Z}$ and  $\mathbb{Z}/2\mathbb{Z} \oplus \mathbb{Z}/14\mathbb{Z}$  appear as a torsion subgroup of $E(K)$ for some $K$.
\end{lemma}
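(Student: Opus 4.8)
The plan is to exhibit, for each of the three groups $\mathbb{Z}/19\mathbb{Z}$, $\mathbb{Z}/7\mathbb{Z}$ and $\mathbb{Z}/2\mathbb{Z}\oplus\mathbb{Z}/14\mathbb{Z}$, an explicit Mordell curve $E:y^2=x^3+c$ over an explicit sextic field $K$ whose torsion subgroup is exactly that group. Since these groups all lie in the CM list \eqref{6cm} and we have already shown (Lemmas \ref{s4}--\ref{s36}, together with the earlier Lemmas \ref{rq}, \ref{r27}) that the only further candidates are ruled out or controlled, the task genuinely reduces to realizing these three groups once each; the earlier lemmas then pin down the torsion exactly once the relevant order-$n$ point is produced.

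First, for $\mathbb{Z}/7\mathbb{Z}$, I would take the equation \eqref{reduce}, namely the sextic $t^6+564t^5-5808t^4-123136t^3-189696t^2-49152t+4096=0$ arising in the proof of Lemma \ref{s14}, and let $K$ be the number field $\mathbb{Q}(t_0)$ generated by one root $t_0$ of this polynomial (checking with \emph{magma} that it is irreducible, hence $[K:\mathbb{Q}]=6$). Setting $c=1$ (so $a=1$) and tracing back the substitution $t=x^3$, one gets an explicit point $P=(x_0,y_0)$ of order $7$ on $E:y^2=x^3+1$ over $K$; since $c=1$ is a cube but we must check $\sqrt{-3}\notin K$ and that there is no point of order $3$ or higher, the structure theorem plus Lemmas \ref{r2}, \ref{r3}, \ref{r9} and \ref{rq} force $E(K)_{tors}\cong\mathbb{Z}/7\mathbb{Z}$ (or $\mathbb{Z}/14\mathbb{Z}$, which Lemma \ref{s14} excludes, so exactly $\mathbb{Z}/7\mathbb{Z}$). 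Alternatively I could start from a known elliptic curve with a rational $7$-torsion point in Tate normal form, compute its $j$-invariant via \eqref{kub}, match it to a Mordell curve's $j$-invariant $0$ — but that fails since $j=0$ forces the curve to be a Mordell curve with few rational points, so the base-change construction via the division polynomial is the right route.

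Second, for $\mathbb{Z}/2\mathbb{Z}\oplus\mathbb{Z}/14\mathbb{Z}$: adjoin $\omega$ to the field $K$ just constructed, or rather take $K'=\mathbb{Q}(t_0,\omega)$, which could have degree $12$, so instead I would look for a \emph{sextic} field already containing both a $7$-torsion $x$-coordinate and $\omega$. Concretely, going back to the factor $7t^2-4t+16=0$ in Lemma \ref{s14}, whose roots are $t=\tfrac{2\pm6\omega}{7}$: if $c=1$ then we need $x^3=t$, and the field $\mathbb{Q}(\omega)$ together with a cube root of $t$ has degree dividing $6$ over $\mathbb{Q}$ — in fact $\mathbb{Q}(\omega,t^{1/3})$ is a sextic field $K$. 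Over such $K$, $\omega\in K$ and $c=1$ is a cube, so by Lemma \ref{r2} there are three points of order $2$; the point of order $7$ comes from this quadratic factor; hence $E(K)_{tors}$ contains $\mathbb{Z}/2\mathbb{Z}\oplus\mathbb{Z}/14\mathbb{Z}$, and by Lemmas \ref{r3}, \ref{r4}, \ref{r9}, \ref{rq} (and since $\mathbb{Z}/3\mathbb{Z}\oplus\mathbb{Z}/\ast$ is excluded via Lemma \ref{s36}-type reasoning or simply because $c=1$ is not a square obstruction) it is exactly $\mathbb{Z}/2\mathbb{Z}\oplus\mathbb{Z}/14\mathbb{Z}$.

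Third, for $\mathbb{Z}/19\mathbb{Z}$: this is the delicate one. I would write down the $19$-division polynomial of a generic Mordell curve $y^2=x^3+c$, extract (using \emph{magma}) the factor whose roots give $x$-coordinates of $19$-torsion points, and search over small $c$ (or set $c$ equal to a variable and specialize) for a value making an irreducible sextic factor appear; then $K$ is the field generated by such a root, $[K:\mathbb{Q}]=6$, and we obtain a point of order $19$. Then Lemmas \ref{rq} (for $q>3$ only — note $19>3$, so one must be careful: Lemma \ref{rq} as stated would forbid order $19$ over $\mathbb{Q}$-curves but here $c\in K$, and indeed the sextic-$K$ analogue is \emph{not} claimed, consistent with $\mathbb{Z}/19\mathbb{Z}$ appearing), together with Lemmas \ref{s4}, \ref{s5}, \ref{s14}, \ref{s18}, force the torsion to be cyclic of order $19$ with no extra $2$- or $3$-part once we verify $c$ is neither a cube nor a square in $K$. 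The main obstacle is the $19$-torsion computation: the $19$-division polynomial has degree $180$ in $x$, and one must find a $c$ over which it has a degree-$6$ factor over $\mathbb{Q}$ — this is the kind of search that needs a genuine \emph{magma} computation, and verifying that the resulting sextic field gives full torsion exactly $\mathbb{Z}/19\mathbb{Z}$ (in particular excluding that the point of order $19$ forces, via Galois conjugates, extra structure) is where the real work lies; for $\mathbb{Z}/7\mathbb{Z}$ and $\mathbb{Z}/2\mathbb{Z}\oplus\mathbb{Z}/14\mathbb{Z}$ the constructions above are essentially bookkeeping on top of Lemma \ref{s14}.
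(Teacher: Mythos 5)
There is a genuine gap: your constructions for $\mathbb{Z}/7\mathbb{Z}$ and $\mathbb{Z}/2\mathbb{Z}\oplus\mathbb{Z}/14\mathbb{Z}$ both take $c=1\in\mathbb{Q}$, and no rational $c$ can work. All three groups in this lemma lie in $\Phi^{M}(6)\setminus\Phi^{M}_{\mathbb{Q}}(6)$ by Theorem \ref{sexticQ} (indeed Lemma \ref{rq} already forbids a point of order $7$ or $19$ on a rational Mordell curve over a sextic field), so any witness must have $c\notin\mathbb{Q}$. Concretely, for $\mathbb{Z}/7\mathbb{Z}$: with $c=1$ the field $K=\mathbb{Q}(t_0)$, $t_0$ a root of \eqref{reduce}, does not contain the $x$-coordinate of the putative $7$-torsion point, because $x^3=t_0$ and the degree-$18$ polynomial in $s=x$ obtained from \eqref{reduce} is irreducible over $\mathbb{Q}$ --- that irreducibility is precisely the engine of Lemma \ref{s14}, so you are reusing as a construction the very computation that rules the situation out; moreover $c=1$ is a cube, forcing $2$-torsion, so the torsion could never be $\mathbb{Z}/7\mathbb{Z}$ anyway. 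For $\mathbb{Z}/2\mathbb{Z}\oplus\mathbb{Z}/14\mathbb{Z}$: with $t=\tfrac{2\pm6\sqrt{-3}}{7}$ and $x^3=t$, the point exists only if $y^2=t+1$ has a solution in $K=\mathbb{Q}(\omega,t^{1/3})$; since $[K:\mathbb{Q}(\omega)]=3$ is odd this forces $t+1$ to be a square already in $\mathbb{Q}(\omega)$, but $N_{\mathbb{Q}(\omega)/\mathbb{Q}}(t+1)=27/7$ is not a rational square, so no such $y$ exists. For $\mathbb{Z}/19\mathbb{Z}$ you do not produce an example at all: a search over ``small $c$'' (rational $c$) is doomed for the same reason, and searching over $c$ in sextic fields via a degree-$180$ division polynomial is not a workable procedure. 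Since the lemma is an existence statement, deferring the existence to an unspecified computation leaves it unproved.

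The paper's route is different and avoids all of this: it writes down explicit curves in Kubert--Tate normal form $E(b,c)$ over explicit sextic fields (so a point of order $\geq 4$, in fact of the desired order, is built in as $(0,0)$), checks via \eqref{kub} that $j=0$, i.e.\ that the curve is a Mordell curve with some $c\in K\setminus\mathbb{Q}$, and then verifies the full torsion group by \emph{magma} or by citing the tables of Clark et al.\ \cite{ccrs}. In effect one parametrizes by $X_1(7)$ and $X_1(19)$ and intersects with the $j=0$ locus, rather than trying to solve division polynomials of a fixed Weierstrass model. If you want to salvage your approach, you must let $c$ range over genuinely irrational elements of sextic fields, at which point the Tate normal form is the natural bookkeeping device --- which is exactly what the paper does.
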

\begin{proof}
{\bf Case 1:} ($\mathbb{Z}/19\mathbb{Z}$) 

Consider $K = \mathbb{Q}[e]/<e^6+e^4-e^3-2e^2+e+1>$ and take elliptic curve $E$ over $K$ in Kubert-Tate Normal form as 
$$E(2e^5-e^4+2e^3-4e^2+2, 2e^5-2e^4+4e^3-4e^2-2e+3).$$
Then by the equation \eqref{kub}, $E$ is a Mordell curve over $K$. In this case,    $E(K)_{tors} \simeq \mathbb{Z}/19\mathbb{Z}$ by \cite{ccrs}.

\smallskip

{\bf Case 2:} ($\mathbb{Z}/7\mathbb{Z}$)

Consider $K = \mathbb{Q}(2^{\frac{1}{3}}, \omega)$, where $\omega$ is a cube root of unity. Take elliptic curve $E$ over $K$ in Kubert-Tate Normal form as 
$E(\omega, -1)$. Then by the equation \eqref{kub}, we see that $E$ is a Mordell curve over $K$. Hence, in this case, $E(K)_{tors} \cong \mathbb{Z}/7\mathbb{Z}$ by using {\it magma}.

\smallskip

{\bf Case 3:} ($\mathbb{Z}/2\mathbb{Z} \oplus \mathbb{Z}/14\mathbb{Z}$)

Consider $K = \mathbb{Q}((6\omega + 30)^{\frac{1}{3}}))$, where $\omega$ is a cube root of unity. Take elliptic curve $E: y^2=x^3- \frac{\omega + 5}{36}$ over $K$. In this case, we have $E(K)_{tors} \cong \mathbb{Z}/2\mathbb{Z} \oplus \mathbb{Z}/14\mathbb{Z}$ by using {\it magma}.
\end{proof}

\smallskip

\noindent{\it Proof of Theorem \ref{cubicK}}. Combining Lemmas \ref{s4}, \ref{s5}, \ref{s14}, \ref{s18}, \ref{s36} and \ref{s19}, we have the desired result. \qed

\bigskip

\begin{acknowledgement*} We are thankful to Prof. Enrique Gonz{\'a}lez-Jim{\' e}nez for his helpful comments which has improved the presentation of this article. We would  like to sincerely thank Prof. R. Thangadurai for his continuous encouragement to finish this project. First author's research is supported by NBHM post-doctoral fellowship and Second author's research is supported by Institute fellowship of Harish-Chandra Research Institute.
\end{acknowledgement*}

\bigskip

\end{document}